\newcommand{\bff}{\mathbf f}
\newcommand{\bx}{\mathbf x}
\newcommand{\by}{\mathbf y}
\newcommand{\zero}{\mathbf 0}
\begin{document}

\title{A positivity preserving inexact Noda iteration for computing the
smallest eigenpair of a large irreducible $M$-matrix\thanks{%
The first author was supported in part by the National Basic Research
Program of China 2011CB302400 and the National Science Foundation of China
(No. 11371219), and the second and third authors were supported in part by
the National Science Council, the National Center for Theoretical Sciences,
the Center of MMSC, and ST Yau Center at Chiao-Da in Taiwan.}}
\author{Zhongxiao Jia \and Wen-Wei Lin \and Ching-Sung Liu }
\thanks{The first author was supported in part by the National Basic
Research Program of China 2011CB302400 and the National Science Foundation
of China (No. 11371219), and the second and third authors were supported in
part by the National Science Council, the National Center for Theoretical
Sciences, the Center of MMSC, and ST Yau Center at Chiao-Da in Taiwan.}
\date{Received: date / Accepted: date}
\maketitle

\begin{abstract}
In this paper, based on the Noda iteration, we present inexact Noda
iterations (INI), to find the smallest eigenvalue and the associated
positive eigenvector of a large irreducible nonsingular $M$-matrix. The
positivity of approximations is critical in applications, and if the
approximations lose the positivity then they may be meaningless and could
not be interpreted. We propose two different inner tolerance strategies for
solving the inner linear systems involved, and prove that the convergence of
resulting INI algorithms is globally linear and superlinear with the
convergence order $\frac{1+\sqrt{5}}{2}$, respectively. The proposed INI
algorithms are structure preserving and maintains the positivity of
approximate eigenvectors. We also revisit the exact Noda iteration and
establish a new quadratic convergence result. All the above is first done
for the problem of computing the Perron root and the positive Perron vector
of an irreducible nonnegative matrix and is then adapted to computing the
smallest eigenpair of the irreducible nonsingular $M$-matrix. Numerical
examples illustrate that the proposed INI algorithms are practical, and they
always preserve the positivity of approximate eigenvectors. We compare them
with the Jacobi--Davidson method, the implicitly restarted Arnoldi method
and the explicitly restarted Krylov--Schur method, all of which cannot
guarantee the positivity of approximate eigenvectors, and illustrate that
the overall efficiency of the INI algorithms is competitive with and can be
considerably higher than the latter three methods.

\subclass{Primary 15A18 \and 65F15 \and 65F50; Secondary 15B48 \and 15B99.}
\end{abstract}

\keywords{Inexact Noda iteration \and inner tolerance \and $M$-matrix \and
nonnegative matrix \and irreducible \and eigenvector \and Perron vector
\and
positivity \and convergence.}


\titlerunning{A positivity preserving inexact Noda iteration}


\institute{Zhongxiao Jia \at
              Department of Mathematical Sciences, Tsinghua University, Beijing 100084,
              People's Republic of China \\
              \email{jiazx@tsinghua.edu.cn}                      \and
           Wen-Wei Lin \at
              Department of Applied Mathematics, National Chiao Tung University, Hsinchu
              300, Taiwan \\
              \email{wwlin@math.nctu.edu.tw}
           \and
           Ching-Sung Liu \at
              Department of Mathematics, National Tsinghua University, Hsinchu 300, Taiwan \\
              \email{chingsungliu@gmail.com}
           }


\section{Introduction}

Irreducible nonsingular $M$-matrices are one class of the most important
matrices from applications, such as discretized PDEs, Markov chains \cite%
{Abate94,Rober90} and electric circuits \cite{Shi96}, and have been studied
extensively in the literature; see, for instance, \cite[Chapter 6]{BPl94}.
In many applications, one is interested in finding the smallest eigenvalue $%
\lambda$ and the associated eigenvector $\mathbf{x}$ of an irreducible
nonsingular $M$-matrix $A\in \mathbb{R}^{n\times n}$.

$M$-matrices are closely related to nonnegative matrices. For instance, an $M
$-matrix $A$ can be expressed in the form $A=\sigma I-B$ with a nonnegative
matrix $B\geq 0$ and some constant $\sigma >\rho (B)$, the spectral radius
of $B$; $A^{-1}$ is nonnegative. For more properties and a systematic
account of $M$-matrices and nonnegative matrices, see \cite{HJo85,BPl94}.

Nonnegative matrices have important applications in many areas \cite{BPl94},
including economics, statistics and network theory. For a nonnegative matrix
$B$, at least one of the eigenvalues of maximal magnitude is nonnegative and
hence equal to the spectral radius $\rho (B)$. The corresponding
eigenvectors $\mathbf{x}$ satisfy $B\mathbf{x}=\rho (B)\mathbf{x}$ and are
called the Perron vectors of $B$ if they are nonnegative. The nonnegative $B$
always has at least one Perron vector. In applications, the Perron vectors
play an important role, and they describe, e.g., an equilibrium, a
probability distribution or an optimal network property \cite{BPl94}.
Furthermore, if $B$ is irreducible, then the Perron-Frobenuis theorem~\cite%
{HJo85} states that $\rho(B)$ is simple and there is a positive eigenvector $%
\mathbf{x}$ associated with $\rho(B)$. One is often interested in verifying
the uniqueness and strict positivity of the Perron vector of the irreducible
nonnegative matrix $B$ \cite{HJo85,BPl94}. The well-known PageRank vectors
are special Perron vectors of very large Google matrices whose largest
eigenvalues are equal to one \cite{Lang06}. From the relation $A=\sigma I-B$%
, we see that the smallest eigenvalue $\lambda $ of the irreducible
nonsingular $M$-matrix $A$ is simple and equal to $\sigma -\rho (B)>0$, and
the positive vector $\mathbf{x}$ is the unique associated eigenvector up to
scaling. Consequently, if $\sigma$ is available, then the computation of the
smallest eigenpair $(\lambda,\mathbf{x})$ of $A$ amounts to that of the
largest eigenpair of $B$.

For a general large and sparse $A$, there are a number of general numerical
methods for computing a small number of its eigenpairs. Krylov type methods
including the power method applied to $A$ directly are suitable for exterior
eigenvalues, i.e., some eigenvalues close to the exterior of the spectrum,
and the associated eigenvectors, but they face serious challenges when the
desired eigenvalues are in the interior of the spectrum. Many methods, such
as inverse iteration, Rayleigh quotient iteration (RQI) and shift-invert
Arnoldi, have been developed to overcome these difficulties; see \cite%
{Saa92,Par98,Ste01}. However, they require the solution of a possibly
ill-conditioned large linear system involving a shifted $A$, called inner
linear system, at each iteration. This is generally very difficult and even
impractical by a direct solver since a factorization of a shifted $A$ may be
expensive or prohibited. When inner linear systems are approximately solved
by iterative solvers, we are led to inner-outer iteration methods, also
called inexact eigensolvers. The inner iteration means that the inner linear
system at each step is approximately solved iteratively, while the outer
iteration is the update of the approximate eigenpair(s). Throughout this
paper, we always make the underlying hypothesis that a direct solver is not
viable for large sparse linear systems and only iterative solvers can be
used.

There has been ever growing and intensive interest in inexact eigensolvers
over last two decades. Among them, inexact inverse iteration\cite%
{LLL97,BGS06,Liu12} and inexact RQI \cite{SimEld02,XueSzl11,Jia12,Jia11} are
the simplest and the most basic ones. In addition, they are often key
ingredients of other sophisticated and practical inexact methods, such as
inverse subspace iteration \cite{RSS09}, the Jacobi--Davidson method \cite%
{SVo96,Ste01} and the shift-invert residual Arnoldi method \cite{Lee07,Jia13}%
. In the mentioned papers and the references therein as well as some others,
a number of theoretical results have been established on these methods.
Particularly, it has been shown \cite{SimEld02,Jia12,Jia11} that, different
from RQI, the inexact RQI generally does not converges globally, rather it
is only locally convergent by requiring that the initial vector is a
reasonably good approximate eigenvector. Because of these two reasons, the
inexact RQI is seldom used in practice unless a good initial guess to the
desired eigenvector is already available.

For the computation of the Perron vector of the nonnegative $B$ and the
eigenvector of the $M$-matrix $A$ associated with the smallest eigenvalue, a
central concern is how to preserve the strict positivity of approximate
eigenvectors. For nonnegative matrices and $M$-matrices, such positivity is
crucial in some applications since if all the components of an approximate
eigenvector do not have the same sign then it may be physically meaningless
and could not be interpreted. Unfortunately, all the methods but the power
method mentioned previously are not structure preserving and cannot
guarantee the desirable positivity of approximations since it is possible
that a converged approximation of $\mathbf{x}$ may well have negative
components, as is the typical case when the unit length $\mathbf{x}$ has
very small components. Theoretically, the power method fits into this
purpose for nonnegative matrices and naturally preserves the strict
positivity of approximate eigenvectors, provided that the starting vector is
positive. Due to the equivalence of nonnegative matrix and $M$-matrix
eigenproblems, the power method can be adapted to computing the smallest
eigenpair of an irreducible nonsingular $M$-matrix. However, the power
method, though globally convergent, is generally very slow and may be
impractical. Therefore, it is very appealing in both theory and practice to
develop both efficient and reliable positivity preserving numerical methods
for the nonnegative matrix and $M$-matrix eigenproblems. We will devote
ourselves to this topic in this paper.

In $1971$, Noda \cite{Nod71} introduced an inverse iteration method with
variable shifts for nonnegative matrix eigenvalue problems. This iteration
method is a structure preserving method and was motivated by the works of
Collatz \cite{Collatz42} in 1942 and Wielandt \cite{Wiela50} in 1950; see
\cite[p. 37, 39 and 59]{varga}, \cite[p. 373]{GLo96} and \cite[p. 55]{BPl94}
for a description and historic overview. However, it was Noda who proposed
its inverse iteration form with the variable shifts different from Rayleigh
quotients. We, therefore, call the iteration the Noda iteration (NI). Given
a positive starting vector, NI naturally preserves the strict positivity of
approximate eigenvectors at all iterations. It has been adapted to computing
the smallest eigenpair of an irreducible nonsingular $M$-matrix \cite%
{Xue96,AXY02}. The purpose of \cite{Xue96,AXY02} is to compute the smallest
eigenvalue of such $M$-matrix with high relative accuracy. There, it has
been shown that NI is practical and effective for such a pursue, in which
the linear systems involved are solved accurately by a special direct
solver, called the GTH like algorithm.

It is well known that RQI is almost always globally convergent for any
starting vector \cite[Theorem 4.9.1]{Par98}, but its correct convergence to
a desired eigenpair is conditional and requires that the initial vector be a
reasonably accurate approximation to the desired eigenvector \cite{Par98}.
As a result, RQI itself is seldom used to solve a practical problem unless a
good initial guess is available. In contrast, a major advantage of NI is
that, for any positive initial vector, it converges globally and computes
the desired eigenpair correctly. Furthermore, the convergence order of NI is
asymptotically superlinear \cite{Nod71} and actually quadratic \cite{Els76}.
As it will become clear, NI always generates a monotonically decreasing
sequence of approximate eigenvalues whose convergence to $\rho(B)$ is
guaranteed; for the smallest eigenvalue $\lambda$ of the irreducible
nonsingular $M$-matrix $A$, it always generates a monotonically increasing
sequence of approximate eigenvalues that converge to $\lambda$
unconditionally. In other words, the approximate eigenvalues converge to $%
\rho(B)$ from above or converge to the smallest eigenvalue of the $M$-matrix
from below. In contrast, for symmetric matrices, Rayleigh quotients always
reside in the spectrum interval. As a result, if RQI converges correctly,
the sequence of approximations approach the Perron root of the nonnegative
matrix and the smallest eigenvalue of the $M$-matrix from inside to outside.

In this paper, keep in mind that only iterative solvers are supposed to be
viable to solve inner linear systems approximately. Based on NI, we first
propose an inexact Noda iteration (INI) to find the Perron root and vector
of an irreducible nonnegative matrix $B$. As an inexact eigensolver, our
major contribution is to propose two practical inner tolerance strategies
for solving the inner linear systems involved, so that the resulting two INI
algorithms are structure preserving and globally converge. The first inner
tolerance strategy uses $\gamma\min(\mathbf{x}_{k})$ as a stopping criterion
for inner iterations with the constant $\gamma<1$ and $\mathbf{x}_k$ is the
current positive approximate eigenvector. The second inner tolerance
strategy solves the inner linear systems with certain decreasing tolerances
for inner iterations, which will be described in the context. We establish a
rigorous convergence theory of INI with these two inner tolerance
strategies, proving that the convergence of the former iteration is globally
linear with the asymptotic convergence factor bounded by $\frac{2\gamma}{%
1+\gamma}$ and that of the latter is asymptotically superlinear with the
convergence order $\frac{1+\sqrt{5}}{2}$, respectively. In order to derive
this superlinear convergence order, we establish a close relationship
between the eigenvalue error and the eigenvector error obtained by NI and
INI, which is interesting in its own right. We also revisit the convergence
of NI and establish a new quadratic convergence result different from that
in \cite{Els76}. As we will see, the INI algorithms developed and the theory
established are easily extended to the computation of the smallest eigenpair
of an irreducible nonsingular $M$-matrix $A$.

Finally, we stress that, different from \cite{Xue96,AXY02}, our aim is the
positivity preserving computation of the desired eigenvectors, while their
concern is the relative high accuracy computation of the Perron root and the
smallest eigenvalue of an $M$-matrix.

The rest of this paper is organized as follows. In Section~2, we introduce
NI and some preliminaries. In Section~3, we present an inexact Noda
iteration and prove some basic properties of it. In Section~4, we propose
two practical INI algorithms, called INI\_1 and INI\_2, respectively, for
computing the spectral radius and the Perron vector of an irreducible
nonnegative matrix. We then establish their global convergence theory.
Moreover, we precisely derive the asymptotic linear convergence factor of
INI\_1 and superlinear convergence order of INI\_2. In Section~5, we adapt
INI\_1 and INI\_2 to the computation of the smallest eigenvalue and the
corresponding vector of an irreducible nonsingular $M$-matrix and present
their convergence theory. In Section~6, we report the numerical results on a
few practical problems to justify the convergence theory of INI and
illustrate their effectiveness. We also compare INI with the
Jacobi--Davidson method \cite{SVo96}, the implicitly restarted Arnoldi
method \cite{Sor92} and the explicitly restarted Krylov--Schur method \cite%
{Ste02}, all of which are not positivity preserving. The experiments
indicate that the proposed INI algorithms always preserve the positivity of
approximate eigenvectors, while the other three methods often fail to do so.
Also, we demonstrate that the INI algorithms are efficient, competitive with
and can outperform the other three methods considerably. Finally, we
summarize the paper with some concluding remarks in Section~7.

\section{Preliminaries, Notation and the Noda Iteration}

\label{prem}

\subsection{Preliminaries and Notation}

\label{sec:eigvec} For any real matrix $B=\left[ b_{ij}\right] \in \mathbb{R}%
^{n\times n}$, we denote $|B|=[|b_{ij}|]$. If the entries of $B$ are all
nonnegative (positive), then we write $B\geq 0$ $(>0)$. For real matrices $B$
and $C$ of the same size, if $B-C$ is a nonnegative matrix, we write $B\geq
C $. A nonnegative (positive) vector is similarly defined. A nonnegative
matrix $B$ is said to be reducible if there exists a permutation matrix $P$
such that%
\begin{equation*}
P^{T}\!BP=\left[
\begin{array}{cc}
E & F \\
O & G%
\end{array}%
\right],
\end{equation*}
where $E$ and $G$ are square matrices; otherwise it is irreducible. Here the
superscript $T$ denotes the transpose of a vector or matrix. Throughout the
paper, we use a $2$-norm for vectors and matrices, and all matrices are $%
n\times n$ unless specified otherwise.

We review some fundamental properties of nonnegative matrices and $M$%
-matrices.

\begin{lemma}[\cite{BPl94}]
\label{thm:M} Let $A\in
\mathbb{R}^{n\times n}$. Then the following statements are
equivalent:
\begin{enumerate}
\item $A= \left(a_{ij}\right)$, $a_{ij}\le 0$ for $i\neq j$, and $A^{-1}\ge
0 $;

\item $A=\sigma I-B$ with some $B\ge 0$ and $\sigma > \rho(B)$.
\end{enumerate}
Matrices having the above properties are called nonsingular $M$-matrices.
\end{lemma}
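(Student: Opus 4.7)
The plan is to establish the equivalence (1) $\Leftrightarrow$ (2) by proving each implication separately. The direction (2) $\Rightarrow$ (1) is the easier one and follows from a Neumann series argument, whereas (1) $\Rightarrow$ (2) is the substantive direction and rests on a Perron--Frobenius type fact about nonnegative matrices.

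For (2) $\Rightarrow$ (1), I would start from $A = \sigma I - B$ with $B \ge 0$ and $\sigma > \rho(B)$. Reading off the off-diagonal entries gives $a_{ij} = -b_{ij} \le 0$ for $i\neq j$ immediately. To show $A^{-1}\ge 0$, factor $A = \sigma(I - B/\sigma)$; since $\rho(B/\sigma) = \rho(B)/\sigma < 1$, the Neumann series $\sum_{k=0}^{\infty}(B/\sigma)^{k}$ converges and equals $(I - B/\sigma)^{-1}$. Because $B\ge 0$, every partial sum is nonnegative, hence so is the limit, and therefore $A^{-1} = \sigma^{-1}(I - B/\sigma)^{-1} \ge 0$.

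For (1) $\Rightarrow$ (2), pick any $\sigma \ge \max_{i} a_{ii}$ and set $B := \sigma I - A$. The sign hypothesis on the off-diagonal entries of $A$ combined with the choice of $\sigma$ gives $B\ge 0$. It remains to prove $\sigma > \rho(B)$. From $A=\sigma I - B$ and $A^{-1}\ge 0$ one obtains $(I - B/\sigma)^{-1} = \sigma A^{-1}\ge 0$, so it suffices to establish the auxiliary fact: if $C\ge 0$ and $(I - C)^{-1}$ exists and is nonnegative, then $\rho(C)<1$. For this, I would apply the Perron--Frobenius theorem to obtain a nonzero nonnegative eigenvector $\bv$ with $C\bv = \rho(C)\bv$, and compute $(I - C)^{-1}\bv = \bv/(1 - \rho(C))$; comparing signs of the two sides rules out $\rho(C)>1$, while $\rho(C)=1$ is excluded by the assumed invertibility of $I-C$.

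The main obstacle is exactly this auxiliary fact in the second direction. One must cleanly rule out both $\rho(C)=1$ and $\rho(C)>1$, and the sign comparison forcing the latter impossibility is delicate when the Perron eigenvector $\bv$ has zero components, so one needs the full Perron--Frobenius statement on the existence of a nonnegative eigenvector for $\rho(C)$ rather than the purely spectral-radius version. Everything else in the proof is bookkeeping.
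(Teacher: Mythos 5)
The paper does not actually prove this lemma: it is quoted as a known characterization of nonsingular $M$-matrices with a citation to Berman and Plemmons, so there is no in-paper argument to compare yours against. Judged on its own, your proof is correct and is the standard textbook argument: the Neumann series $\sum_{k\ge 0}(B/\sigma)^k$ handles (2)$\Rightarrow$(1), and (1)$\Rightarrow$(2) reduces to the auxiliary fact that a nonnegative $C$ for which $(I-C)^{-1}$ exists and is nonnegative must satisfy $\rho(C)<1$, which you prove by pairing the nonnegativity of $(I-C)^{-1}$ against a nonnegative Perron eigenvector $\mathbf v$ of $C$. You are right that this requires the weak Perron--Frobenius theorem (existence of a nonzero nonnegative eigenvector for $\rho(C)$ without irreducibility), and the sign comparison does go through even when $\mathbf v$ has zero entries, since a nonzero vector cannot be simultaneously entrywise $\ge 0$ and entrywise $\le 0$. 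The only loose end is that your choice ``any $\sigma \ge \max_i a_{ii}$'' must in addition satisfy $\sigma>0$ for the factorization $A=\sigma(I-B/\sigma)$ and the required conclusion $\sigma>\rho(B)\ge 0$ to make sense; this is in fact automatic under hypothesis (1) (if all $a_{ii}\le 0$ then $A\le 0$ entrywise, and the diagonal identity $(AA^{-1})_{ii}=1$ would be a sum of nonpositive terms, a contradiction), but it deserves a sentence --- or simply take $\sigma = 1+\max_i|a_{ii}|$ and avoid the issue entirely.
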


For a pair of positive vectors $\mathbf{v}$ and $\mathbf{w}$, define
\begin{equation*}
\max \left(\frac{\mathbf{w}}{\mathbf{v}}\right) = \underset{i}{\max}\left(%
\frac{\mathbf{w}^{(i)}}{\mathbf{v}^{(i)}}\right),\text{ \ }\min\left(\frac{%
\mathbf{w}}{\mathbf{v}}\right) = \underset{i}{\min}\left(\frac{\mathbf{w}%
^{(i)}} {\mathbf{v}^{(i)}}\right),
\end{equation*}
where $\mathbf{v}=[\mathbf{v}^{(1)},\mathbf{v}^{(2)},\ldots,\mathbf{v}%
^{(n)}]^T$ and $\mathbf{w}=[\mathbf{w}^{(1)},\mathbf{w}^{(2)},\ldots,\mathbf{%
w}^{(n)}]^T$. The following lemma gives bounds for the spectral radius of a
nonnegative matrix $B$; see \cite{BPl94,GLo96,HJo85,varga}.

\begin{lemma}[{{\protect\cite[p.~493]{HJo85}}}]
\label{maxmin}Let $B\ $be an irreducible nonnegative matrix. If $\mathbf{v}>
\mathbf{0}$ is not an eigenvector of $B$, then
\begin{equation}
\min\left(\frac{B\mathbf{v}}{\mathbf{v}}\right) <\rho(B) <\max \left(\frac{B%
\mathbf{v}}{\mathbf{v}}\right).  \label{eq:maxmin}
\end{equation}
\end{lemma}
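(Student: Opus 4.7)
The plan is to bound $\rho(B)$ by squeezing it between $\alpha := \min(B\mathbf{v}/\mathbf{v})$ and $\beta := \max(B\mathbf{v}/\mathbf{v})$ using the componentwise inequalities $\alpha\mathbf{v} \le B\mathbf{v} \le \beta\mathbf{v}$, and then to upgrade these weak inequalities to strict ones using irreducibility together with a positive left Perron vector.

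First I would unpack the definitions of $\alpha$ and $\beta$: by construction we have the entrywise bounds $\alpha\mathbf{v} \le B\mathbf{v} \le \beta\mathbf{v}$, with equality in at least one coordinate on each side. Set $\mathbf{u} := B\mathbf{v} - \alpha\mathbf{v} \ge \mathbf{0}$ and $\mathbf{w} := \beta\mathbf{v} - B\mathbf{v} \ge \mathbf{0}$. Because $\mathbf{v}$ is assumed \emph{not} to be an eigenvector of $B$, neither $\mathbf{u}$ nor $\mathbf{w}$ can be the zero vector (otherwise $B\mathbf{v}=\alpha\mathbf{v}$ or $B\mathbf{v}=\beta\mathbf{v}$ would exhibit $\mathbf{v}$ as an eigenvector). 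So both $\mathbf{u}$ and $\mathbf{w}$ are nonnegative and nonzero.

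Next I would invoke the Perron--Frobenius theorem applied to the irreducible nonnegative matrix $B^{T}$ to obtain a strictly positive left Perron vector $\mathbf{y}>\mathbf{0}$ satisfying $\mathbf{y}^{T}B = \rho(B)\mathbf{y}^{T}$. Taking the inner product of $\mathbf{y}$ with $\mathbf{u}$ gives
\begin{equation*}
\mathbf{y}^{T}\mathbf{u} \;=\; \mathbf{y}^{T}B\mathbf{v} - \alpha\,\mathbf{y}^{T}\mathbf{v} \;=\; \bigl(\rho(B) - \alpha\bigr)\,\mathbf{y}^{T}\mathbf{v}.
\end{equation*}
Since $\mathbf{y}>\mathbf{0}$ and $\mathbf{v}>\mathbf{0}$ we have $\mathbf{y}^{T}\mathbf{v}>0$, and since $\mathbf{y}>\mathbf{0}$ pairs against the nonnegative nonzero vector $\mathbf{u}$ we have $\mathbf{y}^{T}\mathbf{u}>0$. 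Hence $\rho(B)-\alpha>0$, i.e., $\alpha<\rho(B)$. An identical argument applied to $\mathbf{w}$ yields $\beta-\rho(B)>0$, giving the right-hand strict inequality.

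The only nontrivial ingredient is the existence of a strictly positive left eigenvector of $B$ associated with $\rho(B)$, and the fact that $\rho(B)$ is an eigenvalue at all; both are part of the Perron--Frobenius theorem for irreducible nonnegative matrices, which is a standard tool already cited in the preceding discussion of the excerpt. Everything else is a one-line bookkeeping step. The main subtlety, and the reason irreducibility is indispensable, is precisely in passing from the weak inequalities $\alpha\le\rho(B)\le\beta$ (which hold for any nonnegative $B$) to strict ones: without the positivity of $\mathbf{y}$ one could not rule out the possibility that $\mathbf{y}^{T}\mathbf{u}=0$ even though $\mathbf{u}\ge\mathbf{0}$ is nonzero.
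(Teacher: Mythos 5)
Your proof is correct. The paper does not prove this lemma at all --- it is quoted directly from Horn and Johnson \cite[p.~493]{HJo85} as a known result --- so there is no in-paper argument to compare against. Your left-Perron-vector argument (pairing $\mathbf{y}^{T}$ with the nonnegative, nonzero vectors $B\mathbf{v}-\alpha\mathbf{v}$ and $\beta\mathbf{v}-B\mathbf{v}$) is the standard proof of this bound, and you correctly identify where irreducibility enters: it guarantees the strict positivity of $\mathbf{y}$, which is what upgrades the weak inequalities to strict ones.
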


Suppose that $A$ is an irreducible nonsingular $M$-matrix and $(\lambda,%
\mathbf{x})$ is the smallest eigenpair of it. Then if $\mathbf{v}>0$ is not
an eigenvector of $A$, it is easily justified from (\ref{eq:maxmin}) and $%
A=\sigma I-B$ that
\begin{equation*}
\min\left(\frac{A\mathbf{v}}{\mathbf{v}}\right) <\lambda <\max \left(\frac{A%
\mathbf{v}}{\mathbf{v}}\right).  \label{eq:minmax}
\end{equation*}

For an irreducible nonnegative matrix $B$, recall that the largest
eigenvalue $\rho(B)$ of $B$ is simple. Let $\mathbf{x}$ be the unit length
positive eigenvector corresponding to $\rho(B)$. Then for any orthogonal
matrix $\left[
\begin{array}{cc}
\mathbf{x} & V%
\end{array}%
\right] $ it holds (cf. \cite{GLo96}) that
\begin{equation}
\left[
\begin{array}{c}
\mathbf{x}^T \\
V^T%
\end{array}%
\right] B\left[
\begin{array}{cc}
\mathbf{x} & V%
\end{array}%
\right] = \left[
\begin{array}{cc}
\rho(B) & \mathbf{c}^T \\
0 & L%
\end{array}%
\right]  \label{eqn: partition}
\end{equation}%
with $L=V^TBV$ whose eigenvalues constitute the other eigenvalues of $B$. If
$\mu$ is not an eigenvalue of $L$,
the sep function for $\mu$ and $L$ is defined as
\begin{equation}
\mathrm{sep}(\mu, L)= \Vert (\mu I-L)^{-1}\Vert^{-1},  \label{eq:sep}
\end{equation}
which is well defined as $\mu\rightarrow\rho(B)$ since $\rho(B)$ is simple.
Throughout the paper, we will denote by $\angle(\mathbf{w},\mathbf{z})$ the
acute angle of any two nonzero vectors $\mathbf{w}$ and $\mathbf{z}$.

\subsection{The Noda iteration}

In \cite{Nod71}, Noda presented an inverse iteration with variable shifts
for computing the Perron root and vector of an irreducible nonnegative
matrix $B$. Given an initial guess $\mathbf{x}_0 >\mathbf{0}$ with $\Vert%
\mathbf{x}_0\Vert =1$, the Noda iteration (NI) is an inverse iteration with
variable shifts, and each iteration consists of three steps
\begin{align}
(\overline{\lambda}_kI-B) \, \mathbf{y}_{k+1} &=\mathbf{x}_k,
\label{eq:step1} \\
\mathbf{x}_{k+1} &=\mathbf{y}_{k+1} \, / \, \Vert \mathbf{y}_{k+1}\Vert,
\label{eq:step2} \\
\overline{\lambda}_{k+1} &=\max \left(\frac{B\mathbf{x}_{k+1}}{\mathbf{x}%
_{k+1}}\right).  \notag  \label{eq:step3}
\end{align}%
The main step is to compute a new approximation $\mathbf{x}_{k+1}$ to $%
\mathbf{x}$ by solving the inner linear system (\ref{eq:step1}). Lemma~\ref%
{maxmin} shows that $\overline{\lambda}_k>$ $\rho(B)$ as long as $\mathbf{x}%
_k$ is not a scalar multiple of eigenvector $\mathbf{x}$. Furthermore, since
$\overline{\lambda}_kI-B$ is an irreducible nonsingular $M$-matrix, its
inverse is irreducible nonnegative. Therefore, we have $\mathbf{y}_{k+1}>%
\mathbf{0}$ and $\mathbf{x}_{k+1}>\mathbf{0}$, meaning that the above
iteration scheme preserves the strict positivity of approximate eigenvector
sequence $\{\mathbf{x}_k\}$. We also see that NI is different from RQI,
where, in the symmetric case, the Rayleigh quotient of $B$ with respect to
any vector lies in the spectrum interval of $B$ and thus no more than $%
\rho(B)$, and the approximate eigenvectors obtained by RQI do not preserve
the strict positivity.

After variable transformation, we get the relation
\begin{equation}
\overline{\lambda}_{k+1}= \overline{\lambda}_k-\min\left(\frac{\mathbf{x}_k}{%
\mathbf{y}_{k+1}}\right),  \label{lambdaupdate}
\end{equation}
so $\overline{\lambda}_k$ is monotonically decreasing. Based on (\ref%
{eq:step1}), (\ref{eq:step2}) and (\ref{lambdaupdate}), we can present NI as
Algorithm~\ref{alg:iva}.

\begin{algorithm}
\begin{enumerate}
  \item Given an initial guess $\bx_0 > \zero$ with $\Vert \bx_0\Vert =1$ and ${\sf tol}>0$,
  compute $\overline{\lambda}_0=
  \max \left(\frac{B\bx_0}{\bx_0}\right)$.
  \item {\bf for} $k =0,1,2,\dots$
  \item \quad Solve the linear system $(\overline{\lambda}_kI-B) \, \by_{k+1}= \bx_k$.
  \item \quad Normalize the vector $\bx_{k+1}= \by_{k+1} \, / \, \Vert \by_{k+1}\Vert $.
  \item \quad Compute $\overline{\lambda}_{k+1}= \overline{\lambda}_k-\min\left(\frac{\mathbf{x}_k}{%
\mathbf{y}_{k+1}}\right)$.
  \item {\bf until} convergence: $\Vert B\bx_{k+1}-\overline{\lambda}_{k+1}
  \bx_{k+1}\Vert <{\sf tol}$.
\end{enumerate}
\caption{Noda iteration (NI)}
\label{alg:iva}
\end{algorithm}

\section{The inexact Noda iteration and some basic properties}

\label{sec:INVITTHM}

Since it is supposed that only iterative solvers are viable to solve the
linear system (\ref{eq:step1}) approximately at step 3 of Algorithm~\ref%
{alg:iva}, in this section, we first propose an inexact Noda iteration (INI)
for the eigenvalue problem of an irreducible nonnegative matrix $B$, and
then we prove a number of basic properties of INI, which will be used to
establish the convergence theory of two practical INI algorithms to be
proposed in Section 4.

\subsection{The inexact Noda iteration}

In INI we compute an approximate solution $\mathbf{y}_{k+1} $ in step 3 of
Algorithm~\ref{alg:iva}, such that%
\begin{align}
(\overline{\lambda }_{k}I-B)\,\mathbf{y}_{k+1}& =\mathbf{x}_{k}+\mathbf{f}%
_{k},\text{{}}  \label{eq:inexactsys} \\
\mathbf{x}_{k+1}& =\mathbf{y}_{k+1} \, / \, \Vert \mathbf{y}_{k+1}\Vert ,
\label{eq:normal}
\end{align}%
where $\mathbf{f}_{k}$ is the residual vector, whose norm $\xi_{k}:=\Vert
\mathbf{f}_{k}\Vert $ is bounded by the inner tolerance at iteration $k$.

\begin{lemma}
\label{monotone}Let $B$ be an irreducible nonnegative matrix and $0\leq
\gamma <1$ be a fixed constant. For the unit length $\mathbf{x}_{k}>\mathbf{0}$,
if $\mathbf{x}_{k}\not=\mathbf{x}$ and $\mathbf{f}_{k}$
in {\rm (\ref{eq:inexactsys})} satisfies
\begin{equation*}
|(\overline{\lambda }_{k}I-B)\,\mathbf{y}_{k+1}-\mathbf{x}_{k}|=|\mathbf{f}%
_{k}|\leq \gamma \,\mathbf{x}_{k},  \label{inexact condi}
\end{equation*}%
then the new approximation $\mathbf{x}_{k+1} >\mathbf{0}$ in
{\rm (\ref{eq:normal})} and the sequence $\{\overline{\lambda }_{k}\}$
with $\overline{\lambda }_{k}
=\max \left( \frac{B\mathbf{x}%
_{k}}{\mathbf{x}_{k}}\right) $ is monotonically decreasing and bounded below
by $\rho (B)$, i.e.,
\begin{equation}
\overline{\lambda }_{k}>\overline{\lambda }_{k+1}\geq \rho (B).
\label{eq:monolam}
\end{equation}
\end{lemma}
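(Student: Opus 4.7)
The plan is to prove the three claims in order: strict positivity of $\mathbf{x}_{k+1}$, then strict monotone decrease $\overline{\lambda}_k > \overline{\lambda}_{k+1}$, then the lower bound $\overline{\lambda}_{k+1} \geq \rho(B)$. The ingredients are all at hand: Lemma~\ref{maxmin} applied to the positive iterates, the fact that $\overline{\lambda}_k I - B$ is a nonsingular $M$-matrix (hence has a nonnegative, in fact irreducibly positive, inverse by Lemma~\ref{thm:M}), and the componentwise bound $|\mathbf{f}_k| \leq \gamma\,\mathbf{x}_k$.

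First I would verify $\mathbf{x}_{k+1} > \mathbf{0}$. Since $\mathbf{x}_k > \mathbf{0}$ is not an eigenvector of $B$ by hypothesis, Lemma~\ref{maxmin} gives $\overline{\lambda}_k > \rho(B)$, so $\overline{\lambda}_k I - B$ is an irreducible nonsingular $M$-matrix and $(\overline{\lambda}_k I - B)^{-1} \geq 0$. The bound $|\mathbf{f}_k| \leq \gamma\,\mathbf{x}_k$ with $\gamma < 1$ gives
\[
\mathbf{x}_k + \mathbf{f}_k \;\geq\; \mathbf{x}_k - |\mathbf{f}_k| \;\geq\; (1-\gamma)\,\mathbf{x}_k \;>\; \mathbf{0}.
\]
Applying $(\overline{\lambda}_k I - B)^{-1}$ preserves positivity, so $\mathbf{y}_{k+1} > \mathbf{0}$, and consequently $\mathbf{x}_{k+1} > \mathbf{0}$.

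Next I would establish strict decrease. Rewrite \eqref{eq:inexactsys} as
\[
B\mathbf{y}_{k+1} \;=\; \overline{\lambda}_k\,\mathbf{y}_{k+1} \;-\; (\mathbf{x}_k + \mathbf{f}_k),
\]
and divide by $\mathbf{y}_{k+1} > \mathbf{0}$ componentwise to obtain
\[
\frac{B\mathbf{y}_{k+1}}{\mathbf{y}_{k+1}} \;=\; \overline{\lambda}_k\,\mathbf{1} \;-\; \frac{\mathbf{x}_k + \mathbf{f}_k}{\mathbf{y}_{k+1}}.
\]
Taking the maximum of both sides, and using that the vector of ratios is invariant under positive scaling of $\mathbf{y}_{k+1}$ so that $\max(B\mathbf{y}_{k+1}/\mathbf{y}_{k+1}) = \max(B\mathbf{x}_{k+1}/\mathbf{x}_{k+1}) = \overline{\lambda}_{k+1}$, one gets
\[
\overline{\lambda}_{k+1} \;=\; \overline{\lambda}_k \;-\; \min\!\left(\frac{\mathbf{x}_k + \mathbf{f}_k}{\mathbf{y}_{k+1}}\right).
\]
Since $\mathbf{x}_k + \mathbf{f}_k \geq (1-\gamma)\mathbf{x}_k > \mathbf{0}$ and $\mathbf{y}_{k+1} > \mathbf{0}$, the subtracted minimum is strictly positive, yielding $\overline{\lambda}_{k+1} < \overline{\lambda}_k$.

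Finally, for the lower bound I would invoke Lemma~\ref{maxmin} again, this time with the positive vector $\mathbf{x}_{k+1}$. If $\mathbf{x}_{k+1}$ is not an eigenvector of $B$, then $\rho(B) < \max(B\mathbf{x}_{k+1}/\mathbf{x}_{k+1}) = \overline{\lambda}_{k+1}$; if it happens to coincide with the Perron vector $\mathbf{x}$, then $B\mathbf{x}_{k+1}/\mathbf{x}_{k+1} \equiv \rho(B)$ and $\overline{\lambda}_{k+1} = \rho(B)$. Either way $\overline{\lambda}_{k+1} \geq \rho(B)$, completing \eqref{eq:monolam}. The only mildly delicate point in the whole argument is justifying $\mathbf{y}_{k+1} > \mathbf{0}$ from $\mathbf{x}_k + \mathbf{f}_k > \mathbf{0}$, where the componentwise hypothesis on $\mathbf{f}_k$ (rather than just a norm bound) is exactly what is needed; everything else is a direct computation from \eqref{eq:inexactsys}.
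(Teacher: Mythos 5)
Your proposal is correct and follows essentially the same route as the paper's proof: positivity of $\mathbf{y}_{k+1}$ via the nonnegative (in fact positive) inverse of the irreducible nonsingular $M$-matrix $\overline{\lambda}_k I - B$ applied to $\mathbf{x}_k + \mathbf{f}_k \geq (1-\gamma)\mathbf{x}_k > \mathbf{0}$, the identity $\overline{\lambda}_{k+1} = \overline{\lambda}_k - \min\bigl((\mathbf{x}_k+\mathbf{f}_k)/\mathbf{y}_{k+1}\bigr)$ for strict decrease, and Lemma~\ref{maxmin} for the lower bound $\overline{\lambda}_{k+1}\geq\rho(B)$. Your explicit treatment of the case where $\mathbf{x}_{k+1}$ coincides with the Perron vector is a small point the paper leaves implicit, but the argument is the same.
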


\begin{proof}
By $0\leq \gamma<1$  and $|\mathbf{f}%
_{k}|\leq \gamma \,\mathbf{x}_{k}$, it is known that $\mathbf{x}_{k}+%
\mathbf{f}_{k}>\mathbf{0}$. By Lemma~\ref{maxmin} we know $\overline{%
\lambda}_k>$ $\rho(B)$ as $\mathbf{x}_k\not=\mathbf{x}$.
Consequently, $\overline{\lambda }_{k}I-B$ is a nonsingular $M$-matrix,
and the vector $\mathbf{y}_{k+1}$ satisfies
\begin{equation*}
\mathbf{y}_{k+1}=(\overline{\lambda }_{k}I-B)^{-1}\left( \mathbf{x}_{k}+%
\mathbf{f}_{k}\right) >\mathbf{0}.
\end{equation*}
This implies $\mathbf{x}_{k+1}=\mathbf{y}_{k+1} \, / \, \Vert \mathbf{y}%
_{k+1}\Vert >\mathbf{0}$ and $\min \left( \frac{\mathbf{x}_{k}+\mathbf{f}_{k}%
}{\mathbf{y}_{k+1}}\right) >0$.

From (\ref{eq:inexactsys}) and the above
it follows that%
\begin{align}
\overline{\lambda }_{k+1}& =\max \left( \frac{B\mathbf{x}_{k+1}}{\mathbf{x}%
_{k+1}}\right) =\max \left( \frac{B\mathbf{y}_{k+1}}{\mathbf{y}_{k+1}}\right)
\notag \\
& =\max \left( \frac{\overline{\lambda }_{k}\mathbf{y}_{k+1}-\mathbf{x}_{k}-%
\mathbf{f}_{k}}{\mathbf{y}_{k+1}}\right)  \notag \\
& =\overline{\lambda }_{k}-\min \left( \frac{\mathbf{x}_{k}+\mathbf{f}_{k}}{%
\mathbf{y}_{k+1}}\right) <\overline{\lambda }_{k},  \label{eq:delamda}
\end{align}%
proving that he sequence $\{\overline{\lambda }_{k}\}$  is monotonically
decreasing. Again, by Lemma~\ref{maxmin} we have
$\overline{\lambda }_{k}>\overline{\lambda }%
_{k+1}\geq \rho (B)$.
\end{proof}

Based on (\ref{eq:inexactsys}), (\ref{eq:normal}) and (\ref{eq:delamda}) and
Lemma~\ref{monotone}, we describe INI as Algorithm~\ref{alg:iiva}.

\begin{algorithm}
\begin{enumerate}
  \item   Given an initial guess $\bx_0 > \zero$ with $\Vert \bx_0\Vert =1$,
  $0\le \gamma < 1$ and ${\sf tol}>0$, compute $\overline{\lambda}_0=
  \max \left(\frac{B\bx_0}{\bx_0}\right)$.
  \item   {\bf for} $k =0,1,2,\dots$
  \item   \quad Solve $(\overline{\lambda}_kI-B) \, \by_{k+1}= \bx_k$
approximately  by an iterative solver such that
\begin{equation*}
\phantom{MM} |(\overline{\lambda}_kI-B)\by_{k+1}-\bx_k|
= |\bff_k| \le \gamma \, \bx_k.
\end{equation*}
  \item   \quad Normalize the vector $\bx_{k+1}= \by_{k+1} \, / \,
  \Vert \by_{k+1}\Vert$.
  \item   \quad Compute $\overline{\lambda}_{k+1}= \overline{\lambda }_{k}-\min \left( \frac{\mathbf{x}_{k}+\mathbf{f}_{k}}{%
\mathbf{y}_{k+1}}\right)$.
  \item   {\bf until} convergence: $\Vert B\bx_{k+1}-\overline{\lambda}_{k+1}
  \bx_{k+1}\Vert <{\sf tol}$.
\end{enumerate}
\caption{Inexact Noda Iteration (INI)}
\label{alg:iiva}
\end{algorithm}

Note that if $\gamma =0$, i.e., $\mathbf{f}_{k}$ $=0$ in (\ref{eq:inexactsys}%
) for all $k$ then Algorithm~\ref{alg:iiva} becomes the standard NI. It
follows from Lemma~\ref{monotone} that Algorithm~\ref{alg:iiva} generates
the positive vector sequence $\{\mathbf{x}_k\}$, so it is a positivity
preserving algorithm. In what follows we will investigate convergence
conditions of INI for $\overline{\lambda }_{k}\rightarrow \rho (B)$ as $%
k\rightarrow \infty $.


\begin{lemma}
\label{coslower}Let $\mathbf{x}>\mathbf{0}$ be the unit length eigenvector
of $B$ associated with $\rho (B)$. For any vector $\mathbf{z}>\mathbf{0}$ with $%
\Vert \mathbf{z}\Vert =1$, it holds that $\cos\angle (\mathbf{z},\mathbf{x}%
)>\min (\mathbf{x})$ and
\begin{equation}
\underset{\Vert \mathbf{z}\Vert =1,\,\mathbf{z}>\mathbf{0}}{\inf }\cos
\angle (\mathbf{z},\mathbf{x})=\min (\mathbf{x}).  \label{eq:coslower}
\end{equation}
\end{lemma}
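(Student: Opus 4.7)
The plan is to reduce the angle computation to an inner product and then exploit the relationship between the $1$-norm and $2$-norm of a positive unit vector. Since $\|\mathbf{z}\|=\|\mathbf{x}\|=1$, we have
\[
\cos\angle(\mathbf{z},\mathbf{x})=\mathbf{z}^{T}\mathbf{x}=\sum_{i=1}^{n}\mathbf{z}^{(i)}\mathbf{x}^{(i)}.
\]
Using $\mathbf{x}^{(i)}\geq \min(\mathbf{x})$ componentwise and the strict positivity of $\mathbf{z}$, one immediately gets
\[
\mathbf{z}^{T}\mathbf{x}\geq \min(\mathbf{x})\sum_{i=1}^{n}\mathbf{z}^{(i)} =\min(\mathbf{x})\,\|\mathbf{z}\|_{1}.
\]
So the strict lower bound $\cos\angle(\mathbf{z},\mathbf{x})>\min(\mathbf{x})$ will follow once I establish that $\|\mathbf{z}\|_{1}>1$ for every strictly positive unit vector $\mathbf{z}$ (with $n\geq 2$, which is the only interesting case).

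Next I would verify this norm inequality by the direct expansion
\[
\|\mathbf{z}\|_{1}^{2}=\sum_{i=1}^{n}(\mathbf{z}^{(i)})^{2}+2\sum_{i<j}\mathbf{z}^{(i)}\mathbf{z}^{(j)} =1+2\sum_{i<j}\mathbf{z}^{(i)}\mathbf{z}^{(j)},
\]
so the second sum is strictly positive whenever $\mathbf{z}>\mathbf{0}$, giving $\|\mathbf{z}\|_{1}>1$ strictly, and hence the desired strict inequality $\cos\angle(\mathbf{z},\mathbf{x})>\min(\mathbf{x})$.

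For the infimum identity \eqref{eq:coslower}, I would exhibit an explicit minimizing sequence. Let $i^{*}$ be an index attaining $\min(\mathbf{x})=\mathbf{x}^{(i^{*})}$, and consider, for $\varepsilon>0$,
\[
\mathbf{z}_{\varepsilon}=\frac{\mathbf{e}_{i^{*}}+\varepsilon \mathbf{1}}{\|\mathbf{e}_{i^{*}}+\varepsilon \mathbf{1}\|},
\]
where $\mathbf{1}$ is the all-ones vector. Each $\mathbf{z}_{\varepsilon}>\mathbf{0}$ with $\|\mathbf{z}_{\varepsilon}\|=1$, and as $\varepsilon\to 0^{+}$ one has $\mathbf{z}_{\varepsilon}\to \mathbf{e}_{i^{*}}$, so $\mathbf{z}_{\varepsilon}^{T}\mathbf{x}\to \mathbf{x}^{(i^{*})}=\min(\mathbf{x})$. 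Combined with the strict lower bound just established, this proves that the infimum equals $\min(\mathbf{x})$ and is not attained.

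The only real subtlety is the strictness of the inequality; everything else is an elementary manipulation. The argument via $\|\mathbf{z}\|_{1}^{2}=1+2\sum_{i<j}\mathbf{z}^{(i)}\mathbf{z}^{(j)}$ handles that cleanly, so I do not anticipate any serious obstacle.
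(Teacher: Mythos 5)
Your proof is correct and follows essentially the same route as the paper: reduce $\cos\angle(\mathbf{z},\mathbf{x})$ to $\mathbf{z}^{T}\mathbf{x}$, bound it below by $\|\mathbf{z}\|_{1}\min(\mathbf{x})$, and use $\|\mathbf{z}\|_{1}>\|\mathbf{z}\|=1$ for a positive unit vector. Your two additions --- the expansion $\|\mathbf{z}\|_{1}^{2}=1+2\sum_{i<j}\mathbf{z}^{(i)}\mathbf{z}^{(j)}$ to justify strictness, and the explicit minimizing sequence $\mathbf{z}_{\varepsilon}\to\mathbf{e}_{i^{*}}$ --- simply make explicit what the paper states without detail and defers to a remark, respectively.
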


\begin{proof}
We have $\cos\angle (\mathbf{z},%
\mathbf{x})=\mathbf{z}^{T}\mathbf{x}>0$. Since $\mathbf{x}>\mathbf{0}$
and $\mathbf{z}>\mathbf{0}$ with $\Vert
\mathbf{x}\Vert =\Vert \mathbf{z}\Vert =1$, we have
$$
\mathbf{z}^T\mathbf{x}\geq \Vert \mathbf{z}\Vert_1 \min(\mathbf{x})
>\Vert \mathbf{z}\Vert \min(\mathbf{x})=\min(\mathbf{x}),
$$
where $\Vert\cdot\Vert_1$ is the vector 1-norm. So (\ref{eq:coslower})
holds.\footnote{We thank one of the referees, who suggested to us this more
direct proof than our original one.}
\end{proof}

We remark that the infimum in (\ref{eq:coslower}) cannot be attained because
$\Vert\mathbf{z}\Vert_1>\Vert\mathbf{z}\Vert=1$ strictly for any $\mathbf{z}>%
\mathbf{0}$ with $\Vert\mathbf{z}\Vert=1$, but $\Vert\mathbf{z}%
\Vert_1\rightarrow\Vert\mathbf{z}\Vert=1$ provided one of the components of $%
\mathbf{z}$ tends to one and all the others tend to zero.

By $\Vert \mathbf{x}\Vert=1$, it is easily seen that $\min (\mathbf{x})\leq
n^{-1/2}$. This upper bound is attained when all the components of $\mathbf{x%
}$ are equal to $1/\sqrt{n}$. On the other hand, we remark that Lynn and
Timlake \cite[Theorem 2.1]{lynn69} derived a compact lower bound
\begin{equation*}
\min(\mathbf{x})\geq \frac{\|\mathbf{x}\|_1\min_{i,j}b_{ij}}{\rho(B)-
\min_i\sum_j b_{ij}+n\min_{i,j}b_{ij}}>0  \label{lowerbound}
\end{equation*}
with the lower attained and equal to the upper bound $n^{-1/2}$ when $B$ is
a generalized stochastic matrix, i.e., all the row sums of $B$ are equal to
a positive constant. For such $B$, its row sum is just $\rho(B)$. So for $\|%
\mathbf{x}\|=1$, $\min(\mathbf{x})$ is always mildly small for $n$ large and
can be very small if $\min_{i,j}b_{ij}$ is very small.

Let $\{\mathbf{x}_{k}\}$ be generated by Algorithm~\ref{alg:iiva}. We
decompose $\mathbf{x}_{k}$ into the orthogonal direct sum%
\begin{equation}
\mathbf{x}_{k}=\mathbf{x} \, \cos \varphi_{k}+\mathbf{p}_{k} \, \sin \varphi
_{k},\quad \mathbf{p}_{k}\in \text{span}(V)\perp \mathbf{x}
\label{eq:decomposition}
\end{equation}%
with $\Vert \mathbf{p}_{k}\Vert =1$ and $\varphi_{k}$ $=$ $\angle \left(
\mathbf{x}_{k},\mathbf{x}\right) $ the acute angle between $\mathbf{x}_{k}$
and $\mathbf{x}$. Now define
\begin{equation}  \label{epsilonk}
\varepsilon_{k}=\overline{\lambda }_{k}-\rho (B),\quad B_{k}=\overline{%
\lambda }_{k}I-B.
\end{equation}
Then from (\ref{eqn: partition}) we have
\begin{equation*}
\left[
\begin{array}{c}
\mathbf{x}^{T} \\
V^{T}%
\end{array}%
\right] B_{k}\left[
\begin{array}{cc}
\mathbf{x} & V%
\end{array}%
\right] =\left[
\begin{array}{cc}
\varepsilon_{k} & \mathbf{c}^T \\
0 & L_{k}%
\end{array}%
\right],  \label{eqLpart2}
\end{equation*}
where $L_{k}=\overline{\lambda }_{k}I-L$. For $\overline{\lambda}%
_k\not=\rho(B)$, it is easy to verify that
\begin{equation}
\left[
\begin{array}{c}
\mathbf{x}^{T} \\
V^{T}%
\end{array}%
\right] B_{k}^{-1}\left[
\begin{array}{cc}
\mathbf{x} & V%
\end{array}%
\right] =\left[
\begin{array}{cc}
\frac{1}{\varepsilon_{k}} & \mathbf{b}_k^T \\
0 & L_{k}^{-1}%
\end{array}%
\right]
\mbox{\ \ with
$\mathbf{b}_k^T=-\frac{\mathbf{c}^TL_k^{-1}}{\varepsilon_{k}}$},
\label{eqLpart3}
\end{equation}%
from which we get
\begin{equation}  \label{BkV}
B_k^{-1}V=\mathbf{x}\mathbf{b}_k^T+VL_k^{-1}=-\mathbf{x}\frac{\mathbf{c}%
^TL_k^{-1}} {\varepsilon_{k}}+VL_k^{-1}.
\end{equation}

From Lemma~\ref{monotone}, since $\left\{ \overline{\lambda }_{k}\right\} $
is monotonically decreasing and bounded by $\rho(B)$ from below, we must
have $\lim_{k\rightarrow\infty} \overline{\lambda }_k=\alpha\geq \rho(B)$,
where $\alpha=\rho(B)$ or $\alpha>\rho(B)$. We next investigate these two
possibilities, respectively, and present some basic results that will play
an important role in proving $\alpha=\rho(B)$ when certain further
restrictions are imposed on the inner tolerance $\xi_k=\Vert \mathbf{f}%
_k\Vert$.

\begin{lemma}
\label{equiThm}Let $B$ be an irreducible nonnegative matrix,
and assume that $%
(\rho (B),\mathbf{x})$ is the largest eigenpair of $B$ with $\mathbf{x}>%
\mathbf{0}$ and $\Vert \mathbf{x}\Vert =1$. If $\mathbf{x}_{k}$, $\overline{%
\lambda }_{k} $, $\mathbf{y}_{k}$ and $\mathbf{f}_{k}$ are generated by
Algorithm~\ref{alg:iiva}, then the following statements are equivalent:%
\begin{equation*}
\text{{\rm (i)}}\underset{k\rightarrow \infty }{\lim }\mathbf{x}_{k}=\mathbf{x}%
;\quad \text{ {\rm (ii)}}\underset{k\rightarrow \infty }{\lim }\overline{\lambda }%
_{k}=\rho (B); \quad \text{{\rm (iii)}}\underset{k\rightarrow \infty }{\lim }%
\Vert \mathbf{y}_{k}\Vert ^{-1}=0.
\end{equation*}
\end{lemma}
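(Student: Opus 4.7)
The plan is to close the cycle (i) $\Rightarrow$ (ii) $\Rightarrow$ (iii) $\Rightarrow$ (i), leveraging the monotonicity of $\{\overline{\lambda}_k\}$ from Lemma~\ref{monotone} and the Perron--Frobenius structure of the irreducible nonnegative $B$. For (i) $\Rightarrow$ (ii), I would use the identity $\overline{\lambda}_k=\max(B\mathbf{x}_k/\mathbf{x}_k)$ that was established inside the proof of Lemma~\ref{monotone}. Since $\mathbf{x}>\mathbf{0}$, the map $\mathbf{v}\mapsto\max(B\mathbf{v}/\mathbf{v})$ is continuous on a neighbourhood of $\mathbf{x}$ and takes the value $\rho(B)$ there, so $\mathbf{x}_k\to\mathbf{x}$ forces $\overline{\lambda}_k\to\rho(B)$ immediately.

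For (ii) $\Rightarrow$ (iii), I would bring in the positive left Perron vector $\mathbf{w}$ of $B$, i.e.\ $\mathbf{w}^TB=\rho(B)\mathbf{w}^T$ with $\mathbf{w}>\mathbf{0}$; this exists and is strictly positive because $B^T$ is also irreducible and nonnegative. Left-multiplying (\ref{eq:inexactsys}) by $\mathbf{w}^T$ collapses it into the scalar identity $\varepsilon_k\,\mathbf{w}^T\mathbf{y}_{k+1}=\mathbf{w}^T(\mathbf{x}_k+\mathbf{f}_k)$. Using $|\mathbf{f}_k|\leq\gamma\mathbf{x}_k$ together with $\mathbf{w}>\mathbf{0}$, the right-hand side is bounded below by $(1-\gamma)\mathbf{w}^T\mathbf{x}_k\geq (1-\gamma)\min(\mathbf{w})\,\|\mathbf{x}_k\|_1\geq (1-\gamma)\min(\mathbf{w})$, where $\|\mathbf{x}_k\|_1\geq\|\mathbf{x}_k\|=1$. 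Consequently $\|\mathbf{y}_{k+1}\|\geq \mathbf{w}^T\mathbf{y}_{k+1}/\|\mathbf{w}\|\geq (1-\gamma)\min(\mathbf{w})/(\|\mathbf{w}\|\,\varepsilon_k)$, so $\|\mathbf{y}_{k+1}\|^{-1}=O(\varepsilon_k)\to 0$.

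For (iii) $\Rightarrow$ (i), dividing (\ref{eq:inexactsys}) by $\|\mathbf{y}_{k+1}\|$ turns the iteration into the residual identity $(\overline{\lambda}_kI-B)\mathbf{x}_{k+1}=(\mathbf{x}_k+\mathbf{f}_k)/\|\mathbf{y}_{k+1}\|$, whose right-hand side has norm at most $(1+\gamma)/\|\mathbf{y}_{k+1}\|\to 0$. Because $\mathbf{x}_k$ is confined to the positive unit sphere and the bounded monotonically decreasing sequence $\{\overline{\lambda}_k\}$ has some limit $\lambda_*\geq\rho(B)$, any convergent subsequence $\mathbf{x}_{k_j+1}\to\mathbf{x}_*$ yields a nonnegative unit eigenpair $(\lambda_*,\mathbf{x}_*)$ of $B$; the Perron--Frobenius theorem for irreducible $B$ then forces $\lambda_*=\rho(B)$ and $\mathbf{x}_*=\mathbf{x}$. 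Uniqueness of the subsequential limit gives $\mathbf{x}_k\to\mathbf{x}$, and (ii) is recovered as a by-product.

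The hard part will be the (ii) $\Rightarrow$ (iii) step: one has to rule out that the near-singularity of $\overline{\lambda}_kI-B$ is absorbed by cancellation inside $\mathbf{x}_k+\mathbf{f}_k$ along the deficient direction of $\rho(B)I-B$, instead of by norm growth of $\mathbf{y}_{k+1}$. The left Perron vector $\mathbf{w}$ isolates exactly that direction and reduces the matter to an elementary scalar inequality; without it, one would be forced into the partition~(\ref{eqLpart3}) and into controlling the potentially cancelling term $\mathbf{c}^TL_k^{-1}V^T(\mathbf{x}_k+\mathbf{f}_k)$ against $\mathbf{x}^T(\mathbf{x}_k+\mathbf{f}_k)$ in the $\mathbf{x}$-component of $\mathbf{y}_{k+1}$.
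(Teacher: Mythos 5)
Your cycle (i)$\Rightarrow$(ii)$\Rightarrow$(iii)$\Rightarrow$(i) is correct, and two of its three legs differ genuinely from the paper. The paper proves (i)$\Rightarrow$(ii) exactly as you do, but then proves (i)$\Rightarrow$(iii) by the orthogonal decomposition $\mathbf{x}_k=\mathbf{x}\cos\varphi_k+\mathbf{p}_k\sin\varphi_k$ together with the block partition (\ref{eqLpart3}) of $B_k^{-1}$, extracting the $\varepsilon_k^{-1}$ blow-up of the $\mathbf{x}$-component while controlling the coupling term $\mathbf{c}^TL_k^{-1}\mathbf{d}_k\sin\varphi_k$; and it proves (iii)$\Rightarrow$(ii) from the bound $\Vert\mathbf{y}_{k+1}\Vert\le 2\Vert(\overline{\lambda}_kI-B)^{-1}\Vert$, which forces $\overline{\lambda}_k$ toward an eigenvalue that monotonicity identifies as $\rho(B)$. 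Your left-Perron-vector argument for (ii)$\Rightarrow$(iii) is cleaner: the identity $\mathbf{w}^T(\overline{\lambda}_kI-B)=\varepsilon_k\mathbf{w}^T$ collapses (\ref{eq:inexactsys}) to $\varepsilon_k\,\mathbf{w}^T\mathbf{y}_{k+1}=\mathbf{w}^T(\mathbf{x}_k+\mathbf{f}_k)\ge(1-\gamma)\min(\mathbf{w})$, avoiding the partition entirely and requiring only eigenvalue convergence rather than the stronger hypothesis (i). Your (iii)$\Rightarrow$(i) step, via compactness of the positive unit sphere and the fact that an irreducible nonnegative matrix admits a nonnegative eigenvector only for the eigenvalue $\rho(B)$, is also stronger than the paper's (iii)$\Rightarrow$(ii). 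This matters: the paper's three implications (i)$\Rightarrow$(ii), (i)$\Rightarrow$(iii), (iii)$\Rightarrow$(ii) do not by themselves return from (ii) or (iii) to (i), so as written they do not close the three-way equivalence, whereas your cycle does. What the paper's route buys instead is the quantitative machinery (the behaviour of $L_k^{-1}$, $\mathrm{sep}(\overline{\lambda}_k,L)$ and the angle $\varphi_k$) that is reused later in the convergence-rate analysis of Theorems~\ref{main1}--\ref{quadconvini}; your scalar argument, while more economical here, does not produce those ingredients.
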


\begin{proof}
(i)$\Rightarrow $(ii)$:$ By the definition of $\overline{\lambda }_{k}$, we
get
\begin{equation*}
\underset{k\rightarrow \infty }{\lim }\overline{\lambda }_{k}=\underset{%
k\rightarrow \infty }{\lim }\max \left( \frac{B\mathbf{x}_{k}}{\mathbf{x}_{k}%
}\right) =\max \left( \underset{k\rightarrow \infty }{\lim }\frac{B\mathbf{x}%
_{k}}{\mathbf{x}_{k}}\right) =\rho(B).
\end{equation*}

(i)$\Rightarrow $(iii)$:$ Since $|\mathbf{f}_{k}|\leq \gamma \,\mathbf{x}_{k}
$, from (\ref{eq:decomposition}) we have%
\begin{align}
\mathbf{y}_{k+1}& =B_{k}^{-1}\left( \mathbf{x}_{k}+\mathbf{f}_{k}\right)
\notag \\
& \geq B_{k}^{-1}\left( 1-\gamma \right) \mathbf{x}_{k}  \notag \\
& =\left( 1-\gamma \right) \left( \varepsilon_{k}^{-1} \, \mathbf{x} \, \cos
\varphi_{k}+B_{k}^{-1} \, \mathbf{p}_{k} \, \sin\varphi_{k}\right).
\label{eq:decomyk}
\end{align}%
Since $\mathbf{p}_{k}\in \text{span}(V)$, we can write
\begin{equation*}
\mathbf{p}_{k}=V\mathbf{d}_k
\end{equation*}
with $\|\mathbf{d}_k\|=1$. From (\ref{BkV}) and (\ref{eq:decomyk}), we get
\begin{equation}
\varepsilon_{k}^{-1} \, \mathbf{x} \, \cos \varphi_{k}+B_{k}^{-1} \, \mathbf{%
p}_{k} \, \sin\varphi_{k}= \varepsilon_{k}^{-1}(\cos\varphi_k-\mathbf{c}%
^TL_k^{-1}\mathbf{d}_k\sin\varphi_k) \mathbf{x}+VL_k^{-1}\mathbf{d}%
_k\sin\varphi_k.  \label{decomp3}
\end{equation}
From Lemma~\ref{coslower} it follows that $\cos \varphi_{k}>\min (\mathbf{x})
$ for all $k$. On the other hand, (i) means that $\cos\varphi_k\rightarrow 1$
and $\sin\varphi_k \rightarrow 0$ as $k\rightarrow \infty$. Since $\underset{%
k\rightarrow \infty }{\lim } \mathbf{x}_{k}=\mathbf{x}$, we get $%
\underset{k\rightarrow \infty }{\lim }\overline{\lambda }_{k}=\rho (B)$. As
a result, from definition (\ref{eq:sep}) and $\Vert V\Vert=1$,
the second term in the right-hand side of
(\ref{decomp3}) is bounded by%
\begin{eqnarray*}
\Vert VL_k^{-1}\mathbf{d_k}\sin\varphi_k\Vert & \leq &\|V\|\|L_k^{-1}\| \|%
\mathbf{d}_k\| \sin\varphi_k \\
&=&\frac{\sin\varphi_k}{\mathrm{sep}(\overline{\lambda }_{k},L)}\rightarrow
\frac{\sin\varphi_k}{\mathrm{sep}(\rho (B),L)}\rightarrow 0,
\label{eq:gkbound}
\end{eqnarray*}%
and
\begin{equation*}
|\mathbf{c}^TL_k^{-1}\mathbf{d}_k\sin\varphi_k|\leq \|\mathbf{c}\| \
\|L_k^{-1}\| \sin\varphi_k\rightarrow 0.
\end{equation*}

Combining (\ref{eq:decomyk}) with the above and exploiting the norm triangle
inequality, we have%
\begin{align*}
\Vert \mathbf{y}_{k+1}\Vert & \geq \left( 1-\gamma \right) \left( \left(
\overline{\lambda }_{k}-\rho (B)\right) ^{-1} \mid \cos\varphi_k-\mathbf{c}%
^TL_k^{-1}\mathbf{d}_k\sin\varphi_k\mid - \Vert VL_k^{-1}\mathbf{d}_{k}\sin
\varphi_{k}\Vert \right) \\
&\geq \left( 1-\gamma \right) \left( \frac{\left| \cos\varphi_k-| \mathbf{c}%
^TL_k^{-1}\mathbf{d}_k| \sin\varphi_k \right|}{|\overline{\lambda }_{k}-\rho
(B)|}-\frac{\sin\varphi_k}{\mathrm{sep}(\overline{\lambda }_{k},L)}\right) \\
& \geq \left( 1-\gamma \right) \left( \frac{\left|\min{(\mathbf{x})}- |%
\mathbf{c}^TL_k^{-1}\mathbf{d}_k |\sin\varphi_k \right|}{|\overline{\lambda }%
_{k}-\rho (B)|}-\frac{\sin\varphi_k}{\mathrm{sep}(\overline{\lambda }_{k},L)}%
\right) \rightarrow \infty.
\end{align*}%
%

(iii)$\Rightarrow $(ii)$:$ From $|\mathbf{f}_{k}|\leq \gamma \,\mathbf{x}$
with $0\le \gamma<1$ and
\begin{equation*}
\mathbf{y}_{k+1} =\left( \overline{\lambda }_{k}I-B\right) ^{-1}\left(
\mathbf{x}_{k}+\mathbf{f}_{k}\right),
\end{equation*}
we get
\begin{equation*}
\Vert \mathbf{y}_{k+1} \Vert \le \Vert ( \overline{\lambda }_{k}I-B)
^{-1}\Vert \left(\Vert\mathbf{x}_{k} \Vert+\Vert \mathbf{f}%
_{k}\Vert\right)\le \Vert < 2\Vert (\overline{\lambda }_{k}I-B)^{-1}\Vert,
\end{equation*}
meaning that
\begin{equation*}
\frac{1}{\|\mathbf{y}_{k+1}\|}> \frac{1}{2\| ( \overline{\lambda }%
_{k}I-B)^{-1}\|}\geq 0.
\end{equation*}
Since $\Vert \mathbf{y}_{k}\Vert^{-1}\rightarrow 0$ for $k\rightarrow \infty$%
, the sequence $\{ \overline{\lambda }_{k}I-B\}$ tends to a singular matrix,
meaning that $\{\overline{\lambda }_{k}\}$ converges to an eigenvalue of $B$%
. From Lemma~\ref{monotone}, we must have $\underset{k\rightarrow \infty }{%
\lim }\overline{\lambda }_{k}=\rho (B)$ because $\{\overline{\lambda }_{k}\}$
is monotonically decreasing with the lower bound $\rho(B)$. Otherwise,
$\underset{k\rightarrow \infty }{\lim }\overline{\lambda }_{k}=\alpha$ would be
an eigenvalue of $B$ bigger than
the spectral radius $\rho(B)$ of $B$, which is impossible.
\end{proof}


\begin{lemma}
\label{opposite}
For Algorithm~\ref{alg:iiva}, if $\lim_{k\rightarrow\infty}
\overline{\lambda }_k=\alpha >\rho (B)$, then {\rm (i)} $%
\Vert \mathbf{y}_{k}\Vert $ is bounded$;$ {\rm (ii)}
$\underset{k\rightarrow
\infty }{\lim }\min (\mathbf{x}_{k}+\mathbf{f}_{k})=0; 
 ${\rm (iii)} $\sin\angle \left( \mathbf{x},\mathbf{x}_k\right) \geq \zeta >0$
 with $\zeta$ some constant.
\end{lemma}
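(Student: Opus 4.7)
The three claims build on one another, and each ultimately exploits the strict inequality $\alpha > \rho(B)$.

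For (i), I would use the fact that $\alpha > \rho(B)$ forces $\alpha I - B$ to be a nonsingular $M$-matrix, so its inverse exists. By continuity of matrix inversion, $(\overline{\lambda}_kI-B)^{-1} \to (\alpha I - B)^{-1}$, and in particular the norms $\Vert(\overline{\lambda}_kI-B)^{-1}\Vert$ are uniformly bounded by some constant $M>0$ for all sufficiently large $k$. Since $\Vert\mathbf{x}_k + \mathbf{f}_k\Vert \le 1+\gamma$ from $|\mathbf{f}_k|\le \gamma\mathbf{x}_k$ and $\Vert\mathbf{x}_k\Vert=1$, the identity $\mathbf{y}_{k+1} = (\overline{\lambda}_kI-B)^{-1}(\mathbf{x}_k+\mathbf{f}_k)$ immediately yields $\Vert\mathbf{y}_{k+1}\Vert \le (1+\gamma)M$, proving boundedness.

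For (ii), I would use that Lemma~\ref{monotone} gives a monotonically decreasing sequence $\{\overline{\lambda}_k\}$ converging to $\alpha$, so $\overline{\lambda}_k - \overline{\lambda}_{k+1}\to 0$. The update formula in step 5 of Algorithm~\ref{alg:iiva} rewrites this as
\begin{equation*}
\min\!\left(\frac{\mathbf{x}_k + \mathbf{f}_k}{\mathbf{y}_{k+1}}\right) = \overline{\lambda}_k - \overline{\lambda}_{k+1} \longrightarrow 0.
\end{equation*}
Componentwise I have, for each index $i$, $\frac{(\mathbf{x}_k + \mathbf{f}_k)^{(i)}}{\mathbf{y}_{k+1}^{(i)}} \ge \frac{\min(\mathbf{x}_k + \mathbf{f}_k)}{\max(\mathbf{y}_{k+1})}$, so that
\begin{equation*}
\min(\mathbf{x}_k + \mathbf{f}_k) \le \max(\mathbf{y}_{k+1}) \cdot \min\!\left(\frac{\mathbf{x}_k + \mathbf{f}_k}{\mathbf{y}_{k+1}}\right) \le \Vert\mathbf{y}_{k+1}\Vert \cdot \min\!\left(\frac{\mathbf{x}_k + \mathbf{f}_k}{\mathbf{y}_{k+1}}\right),
\end{equation*}
and invoking (i) to bound the norm factor yields $\min(\mathbf{x}_k + \mathbf{f}_k)\to 0$.

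For (iii), I would argue by contradiction. If no such $\zeta>0$ exists, there is a subsequence with $\sin\angle(\mathbf{x},\mathbf{x}_{k_j})\to 0$; since both $\mathbf{x}$ and $\mathbf{x}_{k_j}$ are positive unit vectors, this forces $\mathbf{x}_{k_j}\to\mathbf{x}$. By continuity of the functional $\mathbf{v}\mapsto \max(B\mathbf{v}/\mathbf{v})$ at $\mathbf{v}=\mathbf{x}$, where it takes the value $\rho(B)$ because $B\mathbf{x}=\rho(B)\mathbf{x}$, we obtain $\overline{\lambda}_{k_j}\to\rho(B)$. But the full sequence $\{\overline{\lambda}_k\}$ converges to $\alpha>\rho(B)$, so every subsequence must also converge to $\alpha$, a contradiction. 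Alternatively, the equivalence in Lemma~\ref{equiThm} can be applied directly along the subsequence.

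The main obstacle is part (ii): the natural quantity driven to zero by the monotone convergence of $\{\overline{\lambda}_k\}$ is the scalar $\min((\mathbf{x}_k+\mathbf{f}_k)/\mathbf{y}_{k+1})$, not $\min(\mathbf{x}_k+\mathbf{f}_k)$ itself, and the passage from the former to the latter is only legitimate once (i) provides a uniform bound on the denominator. Without the uniform bound $\Vert\mathbf{y}_{k+1}\Vert\le(1+\gamma)M$, the estimate collapses; this is why I would prove (i) first and feed it into (ii). Part (iii) is then the cleanest of the three, following either from (i)--(ii) combined with Lemma~\ref{coslower}, or more directly by the compactness-plus-continuity argument above.
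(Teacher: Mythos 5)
Your proposal is correct and follows essentially the same route as the paper: part (i) via the resolvent bound $\Vert\mathbf{y}_{k+1}\Vert\le\Vert(\overline{\lambda}_kI-B)^{-1}\Vert\,\Vert\mathbf{x}_k+\mathbf{f}_k\Vert$ with the limit matrix $\alpha I-B$ nonsingular (the paper phrases the bound through $\mathrm{sep}(\overline{\lambda}_k,B)\rightarrow\mathrm{sep}(\alpha,B)$), part (ii) via the update identity $\overline{\lambda}_k-\overline{\lambda}_{k+1}=\min\bigl((\mathbf{x}_k+\mathbf{f}_k)/\mathbf{y}_{k+1}\bigr)$ combined with the same $\min/\max$ componentwise estimate, and part (iii) by the identical subsequence contradiction through Lemma~\ref{equiThm}. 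No gaps.
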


\begin{proof}
(i) Since $|\mathbf{f}_{k}|\leq \gamma \,\mathbf{x}_{k}$, we get
\begin{align}
\Vert \mathbf{y}_{k+1}\Vert & =\Vert \left( \overline{\lambda }%
_{k}I-B\right) ^{-1}\left( \mathbf{x}_{k}+\mathbf{f}_{k}\right) \Vert < 2
\, \Vert (\overline{\lambda }_{k}I-B)^{-1}\Vert  \notag \\
& =\frac{2}{\mathrm{sep}(\overline{\lambda }_{k},B)}\rightarrow \frac{2}
{\mathrm{sep}(\alpha ,B)}<\infty .  \label{eq:bdyk}
\end{align}%
(ii) From (\ref{eq:delamda}) it follows that
\begin{equation}
\underset{k\rightarrow \infty }{\lim }\min \left( \frac{\mathbf{x}_{k}+%
\mathbf{f}_{k}}{\mathbf{y}_{k+1}}\right) =\underset{k\rightarrow \infty}{
\lim }\left( \overline{\lambda }_{k}-\overline{\lambda }_{k+1}\right) =0.
\label{eq: minfk}
\end{equation}
On the other hand, from (\ref{eq:bdyk}) and (\ref{eq: minfk}) we have%
\begin{equation*}
\min \left( \frac{\mathbf{x}_{k}+\mathbf{f}_{k}}{\mathbf{y}_{k+1}}\right)
\geq \frac{\min (\mathbf{x}_{k}+\mathbf{f}_{k})}{\max \left( \mathbf{y}%
_{k+1}\right) }\geq \frac{\min (\mathbf{x}_{k}+\mathbf{f}_{k})}{\Vert\mathbf{y}%
_{k+1}\Vert }> \frac{\min (\mathbf{x}_{k}+\mathbf{f}_{k}) \, \mathrm{sep%
}(\overline{\lambda }_{k} ,B)}{2}>0.
\end{equation*}
Thus, we get%
\begin{equation*}
\underset{k\rightarrow \infty }{\lim }\min (\mathbf{x}_{k}+\mathbf{f}%
_{k})=0. 
\end{equation*}
(iii) Suppose there is a subsequence $\{\sin\angle (\mathbf{x},\mathbf{x}%
_{k_{j}})\}$ that converges to zero. Then from Lemma~\ref{equiThm} there
is a subsequence $\{\overline{\lambda }_{k_{j}}\}$ that converges to $\rho (B)$,
a contradiction.
\end{proof}

Keep in mind that in Lemmas~\ref{equiThm}--\ref{opposite} we only assume the
condition $| \mathbf{f}_k |\leq \gamma \mathbf{x}_k$ with $0<\gamma<1$,
under which we can only prove that the sequence $\{\overline{\lambda}_k\}$
converges to either $\rho(B)$ or $\alpha>\rho(B)$. So only this condition is
not enough to guarantee that INI computes the desired eigenpair $(\rho(B),%
\mathbf{x})$ of $B$. In order to make  $\lim_{k\rightarrow \infty}\overline{%
\lambda}_k=\rho(B)$, we have to impose some stronger conditions on $\mathbf{f%
}_k$.

\section{Convergence Analysis of two practical INI algorithms}

\label{sec:type1}

In order to make INI converge correctly and effective, we now propose the
following two practical inner tolerance strategies for the inexactness of
step 3 of Algorithm~\ref{alg:iiva}:

\begin{itemize}
\item INI\_1: the residual norm satisfies $\xi_k = \Vert \mathbf{f}_k\Vert
\le \gamma \min(\mathbf{x}_k)$ for some fixed constant $0<\gamma < 1$;

\item INI\_2: the residual vector satisfies $|\mathbf{f}_k| \le d_k\mathbf{x}%
_k$ with $d_k=1-\overline{\lambda}_k/\overline{\lambda}_{k-1}$ for $k\ge 1$
and $\Vert \mathbf{f}_0\Vert \le \gamma \min(\mathbf{x}_0)$ for some
constant $0<\gamma < 1$.
\end{itemize}

It is easily seen that the residual vectors $\mathbf{f}_k$ of INI\_1 and
INI\_2 must satisfy $|\mathbf{f}_k| \le \gamma \, \mathbf{x}_k$ with $%
0<\gamma < 1$. As a result, it is known from Lemma~\ref{monotone} that each
of INI\_1 and INI\_2 generates a monotonically decreasing sequence $\{%
\overline{\lambda}_k\} $ bounded by $\rho(B)$ and a sequence of positive
vectors $\{ \mathbf{x}_k\}$. INI\_1 and INI\_2 now require stronger
conditions on $\mathbf{f}_k$ than the previous INI.

In Sections~4.1--4.2, we will prove the global convergence of INI\_1 and
INI\_2, respectively. Furthermore, we will show that INI\_1 converges at
least linearly with the asymptotic convergence factor bounded by $\frac{%
2\gamma}{1+\gamma}$ and INI\_2 converges superlinearly with the asymptotic
convergence order $\frac{1+\sqrt{5}}{2}$. In the meantime, we revisit the
convergence of NI and derive a new quadratic convergence result, which is
included in Section 4.2.

We comment that since INI\_2 converges, we must have $\mathbf{d}%
_k\rightarrow 0$, which means that we need to solve the linear systems more
and more accurately as $k$ increases. In contrast, the inner tolerance used
by INI\_1 is fixed except the factor $\min(\mathbf{x}_k)$, which tends to $%
\min(\mathbf{x})$ as $k\rightarrow \infty$ whenever INI\_1. This means that
for a similar inner linear system, i.e., $\overline{\lambda}_k$ almost the
same, we may pay higher computational cost to solve it by INI\_2 than
INI\_1. However, note that INI\_1 and INI\_2 converges linearly and
superlinearly, respectively. Consequently, it is hard and should be
impossible to make a general theoretical comparison of their overall
efficiency. Actually, our numerical experiments will demonstrate that the
overall efficiency of INI\_1 and INI\_2 is comparable, and there is no
general winner between them.

\subsection{Linear convergence of INI\_1}

We decompose $\mathbf{x}_{k+1}$ in the same manner as (\ref{eq:decomposition}%
):
\begin{equation*}
\mathbf{x}_{k+1}=\mathbf{x}\cos\varphi_{k+1}+\mathbf{p}_{k+1}\sin%
\varphi_{k+1}, \quad \mathbf{p}_{k+1}\in \text{span}(V)\perp \mathbf{x}
\label{eq:decompo2}
\end{equation*}%
with $\Vert \mathbf{p}_{k+1}\Vert=1$. So by definition, we have $%
\cos\varphi_{k+1}=\mathbf{x}^{T}\mathbf{x}_{k+1}$ and $\sin\varphi_{k+1}=%
\Vert V^T\mathbf{x}_{k+1}\Vert$. Obviously, $\mathbf{x}_{k}\rightarrow
\mathbf{x}$ if and only if $\tan\varphi_{k}\rightarrow 0$, i.e., $%
\sin\varphi_{k}\rightarrow 0$.

From (\ref{eqLpart3}), we have
\begin{equation*}
\mathbf{x}^T B_k^{-1}=\varepsilon_k^{-1} \mathbf{x}^T-\varepsilon_k^{-1}
\mathbf{c}^TL_k^{-1}V^T.
\end{equation*}
Exploiting the above relation, $\|V^T\|=1$ and $\Vert V^T(\mathbf{x}_k+
\mathbf{f}_k)\Vert \le \sin\varphi_k+\Vert \mathbf{f}_k\Vert$, we obtain
\begin{eqnarray}
\tan\varphi_{k+1}& =&\frac{\sin\varphi_{k+1}}{\cos\varphi_{k+1}} = \frac{%
\Vert V^{T}\mathbf{x}_{k+1}\Vert}{\mathbf{x}^{T}\mathbf{x}_{k+1}} =\frac{%
\Vert V^{T}\mathbf{y}_{k+1}\Vert}{\mathbf{x}^{T}\mathbf{y}_{k+1}}  \notag \\
& =&\frac{\Vert V^{T}B_{k}^{-1}( \mathbf{x}_{k}+\mathbf{f}_{k})\Vert} {%
\mathbf{x}^{T}B_{k}^{-1}\left( \mathbf{x}_{k}+\mathbf{f}_{k}\right)}  \notag
\\
&=& \frac{\Vert L_{k}^{-1}V^{T}\left( \mathbf{x}_{k}+\mathbf{f}_{k}\right)
\Vert}{ \left( \varepsilon_k^{-1}\mathbf{x}^T-\varepsilon_k^{-1}\mathbf{c}^T
L_k^{-1}V^T \right)\left( \mathbf{x}_{k}+\mathbf{f}_{k}\right)}  \notag \\
&= &\frac{\Vert L_{k}^{-1}V^{T}\left( \mathbf{x}_{k}+\mathbf{f}_{k}\right)
\Vert}{ \varepsilon_k^{-1}\mathbf{x}^T \mathbf{x}_k-\varepsilon_k^{-1}%
\mathbf{c}^T L_k^{-1}V^T \mathbf{x}_k +\varepsilon_k^{-1}\mathbf{x}^T
\mathbf{f}_k -\varepsilon_k^{-1}\mathbf{c}^T L_k^{-1} V^T\mathbf{f}_k}
\notag \\
& \leq &\Vert L_{k}^{-1}\Vert \, \varepsilon _{k} \, \frac{\sin\varphi_k +
\Vert\mathbf{f}_k\Vert}{\cos\varphi_k-\mathbf{c}^T L_k^{-1}V^T \mathbf{x}_k
- \Vert \mathbf{f}_k \Vert -\Vert \mathbf{c}\Vert \Vert L_k^{-1}\Vert \Vert%
\mathbf{f}_k\Vert}  \notag \\
& = &\Vert L_{k}^{-1}\Vert \, \varepsilon _{k} \, \frac{\tan\varphi_k + \Vert%
\mathbf{f}_k\Vert/\cos\varphi_k}{1-\mathbf{c}^T L_k^{-1}V^T \mathbf{x}%
_k/\cos\varphi_k - (1+\Vert \mathbf{c}\Vert \Vert L_k^{-1} \Vert)\Vert%
\mathbf{f}_k\Vert/\cos\varphi_k}  \label{eq:tan}
\end{eqnarray}
with the last second inequality holding by assuming that $(1+\Vert c\Vert
\Vert L_k^{-1}) \Vert\mathbf{f}_k\Vert/\cos\varphi_k<1-\mathbf{c}^T
L_k^{-1}V^T \mathbf{x}_k/\cos\varphi_k$. This assumption must be satisfied
provided that $\Vert \mathbf{f}_{k}\Vert$ is suitably small, because, by
Lemma~\ref{coslower}, it holds that $\cos\varphi_k>\min(\mathbf{x})$ for all
$k$.

Particularly, if $\mathbf{f}_k=0$, i.e., $\gamma=0$, we recover NI and get
\begin{equation}
\tan\varphi_{k+1}\leq \frac{\Vert L_{k}^{-1}\Vert \, \varepsilon _{k}} {1-%
\mathbf{c}^T L_k^{-1}V^T \mathbf{x}_k/\cos\varphi_k} \tan\varphi_k :=\beta_k
\tan\varphi_k.  \label{niconv}
\end{equation}
We remark that
\begin{equation*}
\beta_k=\Vert L_k^{-1}\Vert \varepsilon_k
\end{equation*}
if $B$ is a normal matrix as $\mathbf{c}=0$ in this case. Since NI is
quadratically convergent \cite{Els76}, for $k$ large enough we must have
\begin{equation*}
\beta_k=O(\tan\varphi_k)\rightarrow 0.
\end{equation*}
Therefore, for any given positive constant $\beta<1$, it holds that
\begin{equation}
\beta_k<\beta<1  \label{betak}
\end{equation}
for $k\ge N$ with $N$ large enough. This means that, for $k\ge N$, we have
\begin{equation*}
\tan\varphi_{k+1}<\beta\tan\varphi_k.
\end{equation*}


\begin{theorem}
\label{main1} Let $B$ be an irreducible nonnegative matrix. If the sequence
$\{\overline{\lambda }_{k}\}$ is generated by INI\_1, then
$\{\overline{\lambda }_{k}\}$ is monotonically decreasing and
$\lim_{k\rightarrow\infty} \overline{\lambda}_{k}=\rho (B)$.
\end{theorem}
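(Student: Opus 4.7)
The plan is to separate the two assertions of the theorem. Monotonicity follows immediately from Lemma~\ref{monotone}, while identifying the limit as $\rho(B)$ requires a contradiction argument that exploits the INI\_1 tolerance in two complementary directions.

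For monotonicity, I observe that the INI\_1 criterion $\|\mathbf{f}_k\|\le\gamma\min(\mathbf{x}_k)$ implies the componentwise bound $|\mathbf{f}_k|\le\gamma\min(\mathbf{x}_k)\mathbf{1}\le\gamma\mathbf{x}_k$. Lemma~\ref{monotone} then yields $\overline{\lambda}_k>\overline{\lambda}_{k+1}\ge\rho(B)$, so $\overline{\lambda}_k\searrow\alpha$ for some $\alpha\ge\rho(B)$.

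To rule out $\alpha>\rho(B)$, I argue by contradiction. Under this hypothesis, Lemma~\ref{opposite}(i) bounds $\|\mathbf{y}_k\|$ uniformly and Lemma~\ref{opposite}(ii) gives $\min(\mathbf{x}_k+\mathbf{f}_k)\to 0$. The INI\_1 bound also supplies $\min(\mathbf{x}_k+\mathbf{f}_k)\ge(1-\gamma)\min(\mathbf{x}_k)$, so necessarily $\min(\mathbf{x}_k)\to 0$. I then construct a strictly positive uniform lower bound on $\min(\mathbf{x}_{k+1})$ that contradicts this. Because $B$ is irreducible and $\overline{\lambda}_k>\rho(B)$, each $B_k=\overline{\lambda}_kI-B$ is an irreducible nonsingular $M$-matrix and $B_k^{-1}$ is entrywise positive; continuity gives $\delta_k:=\min_{i,j}(B_k^{-1})_{ij}\to\min_{i,j}((\alpha I-B)^{-1})_{ij}=:\delta>0$. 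Using $\|\mathbf{x}_k\|_1\ge\|\mathbf{x}_k\|=1$ together with $\sqrt{n}\,\gamma\min(\mathbf{x}_k)\le\gamma$ (from $\min(\mathbf{x}_k)\le n^{-1/2}$), I obtain $\mathbf{1}^T(\mathbf{x}_k+\mathbf{f}_k)\ge 1-\gamma$; combined with the positivity of $B_k^{-1}$ this produces $\min(\mathbf{y}_{k+1})\ge\delta_k(1-\gamma)$. Dividing by $\|\mathbf{y}_{k+1}\|$, which is bounded by Lemma~\ref{opposite}(i), yields $\min(\mathbf{x}_{k+1})\ge C>0$ uniformly for all large $k$, contradicting $\min(\mathbf{x}_k)\to 0$. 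Hence $\alpha=\rho(B)$.

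The crucial obstacle is extracting the right consequences from the INI\_1 tolerance, which is strictly stronger than the condition $|\mathbf{f}_k|\le\gamma\mathbf{x}_k$ used in the preceding section. I must use the single inequality $\|\mathbf{f}_k\|\le\gamma\min(\mathbf{x}_k)$ in two ways at once: componentwise, to bound $\mathbf{x}_k+\mathbf{f}_k$ below by $(1-\gamma)\min(\mathbf{x}_k)\mathbf{1}$ and so couple $\min(\mathbf{x}_k+\mathbf{f}_k)$ to $\min(\mathbf{x}_k)$; and in norm, to guarantee that $\mathbf{1}^T(\mathbf{x}_k+\mathbf{f}_k)$ stays bounded away from $0$ even as $\min(\mathbf{x}_k)$ shrinks. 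Combined with the strict positivity of inverses of irreducible nonsingular $M$-matrices, these two uses deliver the uniform positive lower bound on $\min(\mathbf{x}_{k+1})$ that closes the argument.
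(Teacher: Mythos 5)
Your proof is correct, but it diverges from the paper's argument at the decisive step. Both proofs obtain monotonicity from Lemma~\ref{monotone}, assume $\overline{\lambda}_k\to\alpha>\rho(B)$ for contradiction, and combine Lemma~\ref{opposite}(ii) with $\min(\mathbf{x}_k+\mathbf{f}_k)\ge(1-\gamma)\min(\mathbf{x}_k)$ to force $\min(\mathbf{x}_k)\to 0$. From there the paper works with the angle decomposition: it invokes Lemma~\ref{coslower} and Lemma~\ref{opposite}(iii) to bound $\cos\varphi_k$ and $\sin\varphi_k$ below, feeds the INI\_1 tolerance into the recurrence (\ref{eq:tan}), and shows $\tan\varphi_{k+1}\le\beta\tan\varphi_k$ with $\beta<1$ for large $k$ (a step that leans on Elsner's quadratic convergence of exact NI to guarantee (\ref{betak})), so that $\mathbf{x}_k\to\mathbf{x}$ and hence $\overline{\lambda}_k\to\rho(B)$, a contradiction. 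You instead derive a direct contradiction with $\min(\mathbf{x}_k)\to 0$: the entrywise positivity of $B_k^{-1}$ for an irreducible nonsingular $M$-matrix, the convergence $\delta_k\to\delta>0$ of its smallest entry, the lower bound $\mathbf{1}^T(\mathbf{x}_k+\mathbf{f}_k)\ge 1-\gamma$, and the upper bound on $\|\mathbf{y}_{k+1}\|$ from Lemma~\ref{opposite}(i) together give $\min(\mathbf{x}_{k+1})\ge C>0$ uniformly, which is incompatible with $\min(\mathbf{x}_k)\to 0$. All of these steps check out. Your route is more elementary and self-contained --- it needs neither the $\tan\varphi$ machinery, nor Lemma~\ref{opposite}(iii), nor the external quadratic-convergence input behind (\ref{betak}) --- and it exploits the strict positivity of $(\overline{\lambda}_kI-B)^{-1}$, which the paper states earlier but never uses in this proof. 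What the paper's heavier machinery buys is reuse: the same $\beta_k$ and $\beta_k'$ bounds drive the rate results in Theorems~\ref{linearconv} and \ref{main2} and the later superlinear analysis, whereas your argument establishes convergence without producing any rate information.
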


\begin{proof}
By the assumption on INI\_1, since $\xi_{k}=\Vert \mathbf{f}_{k}\Vert \leq
\gamma \min (\mathbf{x}_{k})$, it holds that $|\mathbf{f}_{k}|\leq \gamma \,%
\mathbf{x}_{k}$ with $0<\gamma<1$, which satisfies the condition in Lemma~\ref%
{monotone}. So the sequence $\left\{ \overline{\lambda }_{k}\right\}$
is bounded and monotonically decreasing, and
we must have either $\lim_{k\rightarrow\infty}\overline{\lambda }_k
=\rho(B)$ or $\lim_{k\rightarrow\infty}\overline{\lambda }_k=\alpha>\rho(B)$.
Next we prove by contradiction that,
for INI\_1, $\lim_{k\rightarrow\infty}\overline{\lambda }_k
=\rho(B)$ must hold.

Suppose that
$\lim_{k\rightarrow\infty}\overline{\lambda }%
_{k}=\alpha>\rho(B)$.
By $|\mathbf{f}_{k}|\leq \gamma \,\mathbf{x}_{k}$, we get
\begin{equation*}
\min (\mathbf{x}_{k}+\mathbf{f}_{k})\geq \left( 1-\gamma \right) \min (%
\mathbf{x}_{k}).
\end{equation*}
It follows from (ii) of Lemma~\ref{opposite} that
\begin{equation*}
0=\underset{%
k\rightarrow \infty }{\lim }\min (\mathbf{x}_{k}+\mathbf{f}_{k})\ge
\left( 1-\gamma \right)\underset{k\rightarrow \infty }{\lim } \min
(\mathbf{x}_{k})\geq 0.
\end{equation*}
Thus, we have
\begin{equation*}
\underset{k\rightarrow \infty }{\lim }\min (\mathbf{x}_{k})=0.
\label{eq:minxk0}
\end{equation*}%
From Lemma~\ref{coslower} and (iii) of Lemma~\ref{opposite} we know that $%
\sin\varphi _{k}$ and $\cos\varphi _{k}$ are uniformly bounded below by a
positive constant. Therefore, there is an $m>0$ such that
$m\leq\sin \varphi _{k}$ and $m\leq \cos\varphi _{k}$, leading to
\begin{equation}
\frac{1}{\cos\varphi_{k}}\leq \frac{1}{m}\qquad \text{and}\qquad \min (%
\mathbf{x}_{k})\leq \frac{\sin\varphi_k}{m}\min (\mathbf{x}_{k}).
\label{eq:lowersin}
\end{equation}%
Using (\ref{eq:tan}) and (\ref{eq:lowersin}), we obtain%
\begin{align*}
\tan\varphi_{k+1}& \leq \Vert L_{k}^{-1}\Vert \, \varepsilon _{k} \, \frac{%
\tan\varphi_k + \gamma\min(\mathbf{x}_k)\tan\varphi_k/m}{1-\mathbf{c}^T
L_k^{-1}V^T \mathbf{x}_k/m - \gamma(1+\Vert \mathbf{c}\Vert \Vert
L_k^{-1}\Vert) \min(\mathbf{x}_k)/m} \\
& \leq \frac{\Vert L_k^{-1}\Vert \varepsilon_k \left(1 +\gamma\min(%
\mathbf{x}_k)/m\right)}{1-\mathbf{c}^T L_k^{-1}V^T \mathbf{x}_k/m -
\gamma(1+\Vert \mathbf{c}\Vert \Vert L_k^{-1}\Vert) \min(\mathbf{x}_k)/m}
\tan\varphi_{k}.
\end{align*}%
Define
\begin{equation*}
\beta_k^{\prime}= \frac{\Vert L_k^{-1}\Vert \varepsilon_k \left(1
+\gamma\min(\mathbf{x}_k)/m\right)}{1-\mathbf{c}^T L_k^{-1}V^T \mathbf{x}%
_k/m - \gamma(1+\Vert \mathbf{c}\Vert \Vert L_k^{-1}\Vert) \min(\mathbf{x}%
_k)/m}.
\end{equation*}
Note that $\Vert L_k^{-1}\Vert\rightarrow \Vert (\alpha I-L)^{-1}\Vert$ is
uniformly bounded, and $\beta_k^{\prime}$ is a continuous
function with respect to $\min(\mathbf{x}_k)$ for $0<\gamma <1$. Then
it holds that $%
\beta_k^{\prime}\rightarrow \beta_k$ defined
by (\ref{niconv}) as $\min(\mathbf{x}_k)\rightarrow 0$.
Particularly, for $0<\gamma <1$ and any small positive number $\delta$, it
holds that $\beta_k^{\prime}\leq (1+\delta)\beta_k$ provided that $\min(%
\mathbf{x}_k)$ is suitably small. As a result, with $\beta$ defined by
(\ref{betak}), for $k\ge N$
large enough we can choose a sufficiently small $\delta$ such that
\begin{equation*}
\beta_k^{\prime}\leq (1+\delta)\beta_k\leq \beta<1
\end{equation*}
for $\min(\mathbf{x}_k)$ sufficiently small. As a result,
for $k\ge N$ with $N$ large enough and $\min(\mathbf{x}_k)$ sufficiently small,
we have
\begin{equation*}
\tan\varphi_{k+1}\le \beta\tan\varphi_k.
\end{equation*}
It then follows from this that $\tan\varphi_k\rightarrow 0$, i.e.,
$\mathbf{x}_{k}\rightarrow \mathbf{x}$. From
Lemma~\ref{monotone}, this means that $\{\overline{\lambda }_{k}\}$ converges to
$\rho (B)$ monotonically, a contradiction to the assumption that $%
\lim_{k\rightarrow\infty}\overline{\lambda }_{k}=\alpha>\rho(B)$.
\end{proof}


Theorem~\ref{main1} has proved the global convergence of INI\_1, but the
result is only qualitative and does not tell us anything on how fast INI\_1
converges. Next we precisely derive an upper bound for its asymptotic linear
convergence factor.

From (\ref{eq:delamda}) and (\ref{epsilonk}), we have
\begin{equation}
\varepsilon_{k+1}=\varepsilon_k\left(1-\min\left(\frac{\mathbf{x}_k+\mathbf{f%
}_k}{\varepsilon_k\mathbf{y}_{k+1}}\right) \right):= \varepsilon_k\rho_k
\label{eq:ratioerr}
\end{equation}%
with
\begin{equation}
\rho_k =1-\min\left(\frac{\mathbf{x}_k+\mathbf{f}_k}{\varepsilon_k\mathbf{y}%
_{k+1}}\right).  \label{rhok}
\end{equation}
Since $\overline{\lambda}_k-\overline{\lambda}_{k+1}<\overline{\lambda}%
_k-\rho(B)$, from (\ref{rhok}), (\ref{eq:delamda}) and (\ref{eq:monolam}) we
always have
\begin{equation}
\rho_k =1-\frac{\overline{\lambda}_k-\overline{\lambda}_{k+1}}{\overline{%
\lambda}_k-\rho(B)}=\frac{\overline{\lambda}_{k+1}-\rho(B)} {\overline{%
\lambda}_k-\rho(B)} <1.  \label{eq:lok}
\end{equation}


\begin{theorem}
\label{linearconv}For INI\_1, we have
$\underset{k\rightarrow \infty}{\lim} \rho_k\leq \frac{2\gamma%
}{1+\gamma}<1$, i.e., the convergence of INI\_1 is globally linear at least.
\end{theorem}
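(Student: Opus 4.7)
By Theorem~\ref{main1}, $\overline{\lambda}_k\to\rho(B)$ and $\mathbf{x}_k\to\mathbf{x}$, so $\varepsilon_k=\overline{\lambda}_k-\rho(B)\to 0$. Writing $\mathbf{g}_k=\mathbf{x}_k+\mathbf{f}_k$ so that $\mathbf{y}_{k+1}=B_k^{-1}\mathbf{g}_k$ and
$$
\rho_k=1-\min\!\left(\frac{\mathbf{g}_k}{\varepsilon_k\mathbf{y}_{k+1}}\right),
$$
the plan is to extract the dominant rank-one part of $\varepsilon_k B_k^{-1}$ as $\overline{\lambda}_k\downarrow\rho(B)$, pass to the limit in this componentwise ratio, and then feed in the INI\_1 tolerance $|\mathbf{f}_k|\le\gamma\min(\mathbf{x}_k)\mathbf{1}$ in its sharpest componentwise form.

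Multiplying (\ref{eqLpart3}) by $\varepsilon_k$ and using that $L_k^{-1}$ is uniformly bounded (since $\rho(B)$ is not an eigenvalue of $L$), the $\varepsilon_k V L_k^{-1}V^T$ term vanishes in the limit, giving $\varepsilon_k B_k^{-1}\to\mathbf{x}\mathbf{u}^T$ with $\mathbf{u}^T$ obtained from $\mathbf{x}^T$ plus a bounded correction involving $\mathbf{c}^T(\rho(B)I-L)^{-1}V^T$. A direct calculation using (\ref{eqn: partition}) shows $\mathbf{u}^T B=\rho(B)\mathbf{u}^T$ and $\mathbf{u}^T\mathbf{x}=1$, so by the Perron–Frobenius theorem $\mathbf{u}>\mathbf{0}$. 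Since $\mathbf{g}_k$ is positive and bounded, it follows that
$$
\varepsilon_k\mathbf{y}_{k+1}=(\mathbf{u}^T\mathbf{g}_k)\mathbf{x}+o(1),\qquad \min\!\left(\frac{\mathbf{g}_k}{\varepsilon_k\mathbf{y}_{k+1}}\right)=\frac{\min(\mathbf{g}_k/\mathbf{x})}{\mathbf{u}^T\mathbf{g}_k}+o(1).
$$

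The INI\_1 criterion enforces $|f_k^{(i)}|\le\|\mathbf{f}_k\|\le\gamma\min(\mathbf{x}_k)$ for every index $i$. Taking $k\to\infty$ and using $\mathbf{x}_k\to\mathbf{x}$, this delivers the numerator bound $\mathbf{g}_k^{(i)}/\mathbf{x}^{(i)}\ge 1-\gamma\min(\mathbf{x})/\mathbf{x}^{(i)}+o(1)\ge 1-\gamma+o(1)$ and the denominator bound $\mathbf{u}^T\mathbf{g}_k\le 1+\|\mathbf{u}\|_1\gamma\min(\mathbf{x})+o(1)$. The decisive inequality that pins down the exact constant is
$$
\|\mathbf{u}\|_1\min(\mathbf{x})\le\sum_i u^{(i)}\mathbf{x}^{(i)}=\mathbf{u}^T\mathbf{x}=1,
$$
which is immediate from $\mathbf{u},\mathbf{x}>\mathbf{0}$. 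Consequently $\mathbf{u}^T\mathbf{g}_k\le 1+\gamma+o(1)$, $\liminf_k\min(\mathbf{g}_k/(\varepsilon_k\mathbf{y}_{k+1}))\ge(1-\gamma)/(1+\gamma)$, and hence $\limsup_k\rho_k\le 1-(1-\gamma)/(1+\gamma)=2\gamma/(1+\gamma)<1$, which is the asserted asymptotic linear rate.

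The main obstacle is the first step: correctly extracting the rank-one asymptotic factor $\mathbf{x}\mathbf{u}^T$ of $\varepsilon_k B_k^{-1}$ and identifying $\mathbf{u}$ as the positive left Perron eigenvector normalised by $\mathbf{u}^T\mathbf{x}=1$. That identification is precisely what unlocks the uniform inequality $\|\mathbf{u}\|_1\min(\mathbf{x})\le 1$, and with it the clean bound $2\gamma/(1+\gamma)$; a less careful argument would leave an unspecified, problem-dependent spectral constant in front of $\gamma$ in the denominator.
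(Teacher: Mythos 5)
Your proposal is correct and reaches the paper's bound $\frac{2\gamma}{1+\gamma}$, but it takes a somewhat different and more elaborate route. The paper never identifies the left Perron vector: it uses the componentwise consequence $|\mathbf{f}_k|\le\gamma\,\mathbf{x}_k$ of the INI\_1 tolerance to sandwich $\left(1-\gamma\right)\mathbf{x}_k\le\mathbf{x}_k+\mathbf{f}_k\le\left(1+\gamma\right)\mathbf{x}_k$, applies the nonnegativity (hence monotonicity) of $B_k^{-1}$ to get $\left(1-\gamma\right)B_k^{-1}\mathbf{x}_k\le\mathbf{y}_{k+1}\le\left(1+\gamma\right)B_k^{-1}\mathbf{x}_k$, factors out $\frac{1-\gamma}{1+\gamma}$ immediately, and then only needs $\varepsilon_kB_k^{-1}\mathbf{x}_k\rightarrow\mathbf{x}$ from the block decomposition (\ref{Bkinv}) so that the residual minimum tends to $\min(\mathbf{x}/\mathbf{x})=1$. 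You instead extract the full rank-one limit $\varepsilon_kB_k^{-1}\rightarrow\mathbf{x}\mathbf{u}^T$ with $\mathbf{u}$ the positive left Perron vector normalized by $\mathbf{u}^T\mathbf{x}=1$ (which, with the correct sign on the $\mathbf{c}^T(\rho(B)I-L)^{-1}V^T$ correction, does check out against (\ref{eqn: partition})), and then estimate numerator and denominator separately, using $\Vert\mathbf{u}\Vert_1\min(\mathbf{x})\le\mathbf{u}^T\mathbf{x}=1$ to recover the $1+\gamma$ in the denominator. That last inequality is clever but not actually needed: the same denominator bound $\mathbf{u}^T(\mathbf{x}_k+\mathbf{f}_k)\le(1+\gamma)\mathbf{u}^T\mathbf{x}_k\rightarrow 1+\gamma$ follows directly from $|\mathbf{f}_k|\le\gamma\,\mathbf{x}_k$ and $\mathbf{u}>\mathbf{0}$, which is essentially the paper's shortcut. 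What your version buys is a sharper structural picture ($\varepsilon_k\mathbf{y}_{k+1}\approx(\mathbf{u}^T\mathbf{g}_k)\mathbf{x}$, anticipating Theorem~\ref{eslonyk}) and a cleaner $\limsup$ formulation; what it costs is the extra work of identifying and signing $\mathbf{u}$, which the paper's monotonicity argument avoids entirely.
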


\begin{proof}
Since $\xi_{k}=\Vert \mathbf{f}_k\Vert\leq \gamma \min (\mathbf{x}_{k})$
in INI\_1, it holds that $|\mathbf{f}%
_{k}|\leq \gamma \,\mathbf{x}_{k}$. Therefore, we have
\begin{equation*}
\left( 1-\gamma \right) \mathbf{x}_{k} \leq \mathbf{x}_{k}+\mathbf{f}%
_{k}\leq \left( 1+\gamma \right) \mathbf{x}_{k}.
\end{equation*}
As $B_{k}^{-1}\geq 0$, it follows from the above relation that
\begin{equation*}
\left( 1-\gamma \right) B_{k}^{-1}\mathbf{x}_{k}\leq \mathbf{y}_{k+1}\leq
\left( 1+\gamma \right) B_{k}^{-1}\mathbf{x}_{k}.
\end{equation*}
Therefore, we have%
\begin{equation*}
\min \left( \frac{\mathbf{x}_{k}+\mathbf{f}_{k}}{\varepsilon_{k}\mathbf{y}%
_{k+1}}\right) \geq \min \left(\frac{(1-\gamma)\mathbf{x}_{k}}{(1+\gamma)
\varepsilon_{k}B_k^{-1}\mathbf{x}_{k}}\right)=\frac{1-\gamma}{1+\gamma}
\min\left(\frac{\mathbf{x}_{k}}{\varepsilon_{k}B_k^{-1}\mathbf{x}_{k}}%
\right).  \label{eq:linearconv}
\end{equation*}%
From (\ref{Bkinv}), we get
\begin{equation}
\varepsilon_k B_k^{-1}\mathbf{x}_k=\mathbf{x}\mathbf{x}^T \mathbf{x}_k-\mathbf{x}
\mathbf{c}^T L_k^{-1}\mathbf{V}^T\mathbf{x}_k+\varepsilon_k V L_k^{-1} V^T\mathbf{x}_k.
\label{rela3}
\end{equation}
Since, from Theorem~\ref{main1}, $\underset{k\rightarrow \infty }{\lim }
\mathbf{x}_{k}=\mathbf{x}$ and $\underset{k\rightarrow \infty }{\lim }
\overline{\lambda}_k=\rho(B)$, we have $\varepsilon_k\rightarrow 0$ and
$L_k^{-1}\rightarrow \left(\rho(B) I-L\right)^{-1}$. On the other
hand, since $L_k^{-1}\rightarrow (\rho(B) I-L)^{-1}$ and
$\underset{k\rightarrow \infty }{\lim }V^T\mathbf{x}_k=
V^T\mathbf{x}=0$, from (\ref{rela3}) we get
\begin{equation*}
\underset{k\rightarrow \infty }{\lim }\varepsilon_{k}B_{k}^{-1}\mathbf{x}_{k}
=\mathbf{x}.
\end{equation*}
Consequently, we obtain
\begin{align*}
\underset{k\rightarrow \infty }{\lim }\min \left( \frac{\mathbf{x}_{k}+%
\mathbf{f}_{k}}{\varepsilon_{k}\mathbf{y}_{k+1}}\right) & \geq \frac{1-\gamma%
}{1+\gamma} \min\left(\underset{k\rightarrow \infty }{\lim}\frac{\mathbf{x}%
_{k}}{\varepsilon_{k} B_k^{-1}\mathbf{x}_{k}}\right) \\
&= \frac{1-\gamma}{1+\gamma} \min\left(\frac{\mathbf{x}}{\mathbf{x}}\right) =%
\frac{1-\gamma}{1+\gamma}>0,
\end{align*}%
leading to
\begin{equation}
\underset{k\rightarrow \infty }{\lim }\rho_{k}\leq 1-\frac{1-\gamma }{
1+\gamma }=\frac{2\gamma}{1+\gamma}<1.  \label{convfactor}
\end{equation}
\end{proof}

It is seen from (\ref{convfactor}) that if $\gamma$ is small then INI\_1
must ultimately converge fast.

\subsection{Superlinear convergence of INI\_2}

In this subsection, we establish the global convergence theory of INI\_2 and
prove that its superlinear convergence order is $\frac{1+\sqrt{5}}{2}$. In
addition, we derive a relationship between the eigenvalue error $%
\varepsilon_k= \overline{\lambda}_k-\rho(B)$ and the eigenvector error $%
\tan\varphi_k$, which holds for both NI and INI. In the meantime, as an
important complement, we revisit the convergence of NI and prove its
quadratic convergence in terms of $\tan\varphi_k$.

\begin{theorem}
\label{main2}Let $B$ be an irreducible nonnegative matrix. If $\overline{%
\lambda }_{k}$ is generated by INI\_2, then $\underset{k\rightarrow \infty }
{\lim }\overline{\lambda }_{k}=\rho (B)$.
\end{theorem}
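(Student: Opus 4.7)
The plan is to prove Theorem~\ref{main2} by contradiction. By Lemma~\ref{monotone}, the sequence $\{\overline{\lambda}_k\}$ produced by INI\_2 is monotone decreasing and bounded below by $\rho(B)$, so $\overline{\lambda}_k \to \alpha$ for some $\alpha \ge \rho(B)$. I would assume $\alpha > \rho(B)$ and drive this to a contradiction in three short steps.

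First, since $\overline{\lambda}_k$ converges to the strictly positive limit $\alpha > \rho(B) > 0$, the INI\_2 tolerance parameter
$$
d_k \;=\; 1 - \frac{\overline{\lambda}_k}{\overline{\lambda}_{k-1}}
$$
tends to $0$. Note that $d_k \in [0,1)$ at every step, so the proof of Lemma~\ref{opposite} applies verbatim with the fixed $\gamma$ replaced by the per-step constant $d_k$ (the only use of $\gamma<1$ in that proof is to guarantee $\mathbf{x}_k+\mathbf{f}_k>\mathbf{0}$ and $\Vert\mathbf{x}_k+\mathbf{f}_k\Vert<2$). In particular, part (ii) of the lemma gives $\min(\mathbf{x}_k+\mathbf{f}_k)\to 0$. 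The INI\_2 pointwise bound $|\mathbf{f}_k| \le d_k\mathbf{x}_k$ then implies $\min(\mathbf{x}_k+\mathbf{f}_k) \ge (1-d_k)\min(\mathbf{x}_k)$, and combined with $d_k \to 0$, this forces $\min(\mathbf{x}_k) \to 0$.

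Next I would exploit compactness and the irreducibility of $B$. Because $\Vert\mathbf{x}_k\Vert=1$ and $\mathbf{x}_k\ge\mathbf{0}$, Bolzano--Weierstrass provides a subsequence $\mathbf{x}_{k_j}\to\mathbf{x}^{*}$ with $\Vert\mathbf{x}^{*}\Vert=1$, $\mathbf{x}^{*}\ge\mathbf{0}$, and, by continuity of $\min$, $\min(\mathbf{x}^{*})=0$. Thus $S:=\mathrm{supp}(\mathbf{x}^{*})$ is a proper nonempty subset of $\{1,\ldots,n\}$. Since $B$ is irreducible its directed graph is strongly connected, so there exist $j\in S$ and $i\notin S$ with $b_{ij}>0$; for this $i$ the $i$th entry of $\mathbf{x}^{*}$ vanishes while $(B\mathbf{x}^{*})^{(i)} \ge b_{ij}(\mathbf{x}^{*})^{(j)} > 0$. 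Consequently along the subsequence
$$
\overline{\lambda}_{k_j} \;=\; \max\!\left(\frac{B\mathbf{x}_{k_j}}{\mathbf{x}_{k_j}}\right) \;\ge\; \frac{(B\mathbf{x}_{k_j})^{(i)}}{(\mathbf{x}_{k_j})^{(i)}} \;\longrightarrow\; +\infty,
$$
contradicting $\overline{\lambda}_{k_j}\to\alpha<\infty$. Hence $\alpha=\rho(B)$, as required.

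The main technical check I anticipate is the transfer of Lemma~\ref{opposite} (phrased for Algorithm~\ref{alg:iiva} with a fixed $\gamma\in[0,1)$) to INI\_2 with the varying tolerance $d_k$; this reduces to inspecting the two places where $\gamma<1$ is used in the lemma's proof and noting that $d_k<1$ at every step is all that is needed. Once that is in hand, the implication chain $\alpha>\rho(B) \Rightarrow d_k\to 0 \Rightarrow \min(\mathbf{x}_k)\to 0 \Rightarrow$ blow-up of $\overline{\lambda}_{k_j}$ is immediate, with the irreducibility of $B$ playing the decisive role in the last step.
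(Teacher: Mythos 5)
Your proposal is correct, and it diverges from the paper's proof at the decisive step. The paper proves Theorem~\ref{main2} by reducing to the machinery of Theorem~\ref{main1}: under the assumption $\alpha>\rho(B)$ it too reaches $\min(\mathbf{x}_k)\to 0$ via Lemma~\ref{opposite}(ii), but then feeds this into the $\tan\varphi$ recursion (\ref{eq:tan}), shows $\beta_k^{\prime}\leq(1+\delta)\beta_k\leq\beta<1$ for $k$ large, concludes $\tan\varphi_k\to 0$, hence $\mathbf{x}_k\to\mathbf{x}$ and $\overline{\lambda}_k\to\rho(B)$, contradicting $\alpha>\rho(B)$. You instead extract a convergent subsequence $\mathbf{x}_{k_j}\to\mathbf{x}^{*}$ with $\|\mathbf{x}^{*}\|=1$ and $\min(\mathbf{x}^{*})=0$, and use irreducibility of $B$ to find an index $i$ with $(\mathbf{x}^{*})^{(i)}=0$ but $(B\mathbf{x}^{*})^{(i)}>0$, forcing $\overline{\lambda}_{k_j}\geq(B\mathbf{x}_{k_j})^{(i)}/(\mathbf{x}_{k_j})^{(i)}\to+\infty$, which contradicts boundedness directly. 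Your route is more elementary and self-contained: it bypasses the sep function, the decomposition (\ref{eq:decomposition}), and the $\delta$, $N$, $N_1$, $N_2$ bookkeeping inherited from Theorem~\ref{main1}, and it makes the role of irreducibility explicit. What it does not provide is the contraction $\tan\varphi_{k+1}\leq\beta\tan\varphi_k$, which the paper's argument sets up as scaffolding for the later rate results (Theorems~\ref{superconv} and~\ref{quadconvini}); but for the bare convergence claim of Theorem~\ref{main2} that is not needed. Your transfer of Lemma~\ref{opposite} to the varying tolerance $d_k<1$ is legitimate for exactly the reason you give, and the step $\min(\mathbf{x}_k+\mathbf{f}_k)\geq(1-d_k)\min(\mathbf{x}_k)$ with $d_k\to 0$ correctly yields $\min(\mathbf{x}_k)\to 0$.
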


\begin{proof}
Since $|\mathbf{f}_{k}|\leq d_{k}\mathbf{x}_{k}$ with $d_{k}=\left(\overline{%
\lambda }_{k-1}-\overline{\lambda }_{k}\right)/\overline{\lambda }_{k-1}<1$
and $\Vert \mathbf{f}_0\Vert \le \gamma \min(\mathbf{x}_0)$ with
$0<\gamma < 1$, $\mathbf{f}_k$ satisfies the condition of Lemma~\ref%
{monotone}. Therefore, the sequence $\{\overline{\lambda}_k\}$ generated by
INI\_2 is monotonically decreasing with the lower bound $\rho(B)$ and
converges to some limit $\alpha\ge \rho(B)$. This shows
that $\xi_{k}=\Vert \mathbf{f}_{k}\Vert \rightarrow 0$ as $k\rightarrow
\infty $. Assume that $\underset{k\rightarrow \infty }{\lim }\overline{%
\lambda }_{k}=\alpha >\rho(B)$. Then there exists a positive integer
$N_1$ such that $\xi_k\leq \gamma<1$ with $\gamma$ defined in INI\_1.
For $\delta$, $\beta_k^{\prime}$ and $N$ in the proof of
Theorem~\ref{main1}, take $N_2=\max\{N_1,N\}$. Then for $k\geq N_2$ we get
\begin{equation*}
\tan\varphi_{k+1}\leq \beta \tan\varphi_k.
\end{equation*}
Hence, it holds that $\underset{k\rightarrow \infty }{\lim }\overline{%
\lambda }_{k}=\rho (B)$.
\end{proof}

We further have the following result.

\begin{theorem}
\label{eslonyk} For INI\_2, it holds that
\begin{equation*}
\underset{k\rightarrow \infty}{\lim}\varepsilon_k\mathbf{y}_{k+1} = \mathbf{x%
},
\end{equation*}
where $\varepsilon_k$ is defined by {\rm (\ref{epsilonk})}.
\end{theorem}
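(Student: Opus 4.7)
The plan is to split $\varepsilon_k \mathbf{y}_{k+1} = \varepsilon_k B_k^{-1}(\mathbf{x}_k+\mathbf{f}_k)$ into the two pieces $\varepsilon_k B_k^{-1}\mathbf{x}_k$ and $\varepsilon_k B_k^{-1}\mathbf{f}_k$, show that the first tends to $\mathbf{x}$ and the second tends to $\mathbf{0}$, and then add. The first piece has essentially been handled inside the proof of Theorem~\ref{linearconv}: decomposing $\mathbf{x}_k = \mathbf{x}\cos\varphi_k + V\mathbf{d}_k \sin\varphi_k$ with $\|\mathbf{d}_k\|=1$ and using (\ref{BkV}) and $B_k^{-1}\mathbf{x}=\varepsilon_k^{-1}\mathbf{x}$, one obtains
\begin{equation*}
\varepsilon_k B_k^{-1}\mathbf{x}_k = \cos\varphi_k\,\mathbf{x} - \sin\varphi_k\,(\mathbf{c}^T L_k^{-1}\mathbf{d}_k)\,\mathbf{x} + \varepsilon_k\sin\varphi_k\, V L_k^{-1}\mathbf{d}_k.
\end{equation*}
By Theorem~\ref{main2} we have $\overline{\lambda}_k \to \rho(B)$, hence $\varepsilon_k \to 0$, $\mathbf{x}_k \to \mathbf{x}$ (so $\cos\varphi_k\to 1$, $\sin\varphi_k\to 0$), and $L_k^{-1} \to (\rho(B)I-L)^{-1}$ which is a finite bounded matrix because $\rho(B)$ is simple. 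The last two terms therefore vanish and the first piece tends to $\mathbf{x}$.

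For the second piece, the key point is that the INI\_2 inner tolerance forces $\mathbf{f}_k$ to go to zero in a quantitatively controlled way. Indeed, $|\mathbf{f}_k|\leq d_k \mathbf{x}_k$ with $d_k = 1 - \overline{\lambda}_k/\overline{\lambda}_{k-1}$, and because the convergent sequence $\{\overline{\lambda}_k\}$ is Cauchy, $d_k \to 0$; hence $\|\mathbf{f}_k\|\leq d_k\|\mathbf{x}_k\|=d_k \to 0$. Decomposing $\mathbf{f}_k = (\mathbf{x}^T\mathbf{f}_k)\mathbf{x} + V\mathbf{e}_k$ with $\mathbf{e}_k = V^T\mathbf{f}_k$ and applying (\ref{BkV}) together with $B_k^{-1}\mathbf{x}=\varepsilon_k^{-1}\mathbf{x}$ gives
\begin{equation*}
\varepsilon_k B_k^{-1}\mathbf{f}_k = (\mathbf{x}^T\mathbf{f}_k)\,\mathbf{x} - (\mathbf{c}^T L_k^{-1}\mathbf{e}_k)\,\mathbf{x} + \varepsilon_k V L_k^{-1}\mathbf{e}_k.
\end{equation*}
Since $|\mathbf{x}^T\mathbf{f}_k|\leq\|\mathbf{f}_k\|\leq d_k \to 0$, $\|\mathbf{e}_k\|\leq\|\mathbf{f}_k\|\to 0$, $L_k^{-1}$ is uniformly bounded in the limit, and $\varepsilon_k\to 0$, all three terms tend to zero. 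Hence $\varepsilon_k B_k^{-1}\mathbf{f}_k \to \mathbf{0}$.

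Combining the two pieces yields $\varepsilon_k\mathbf{y}_{k+1}\to \mathbf{x}$, as desired. The only subtle point in the argument is establishing that the nominally divergent factor $\varepsilon_k^{-1}$ appearing after applying $B_k^{-1}$ to the $\mathbf{x}$-component of $\mathbf{f}_k$ is tamed by $\varepsilon_k$ on the left; this works precisely because the INI\_2 tolerance criterion makes $\|\mathbf{f}_k\|$ go to zero (something which, notably, is \emph{not} guaranteed for the fixed-factor INI\_1 criterion). Everything else is a continuity argument using the simplicity of $\rho(B)$ and the convergence $\overline{\lambda}_k\to\rho(B)$, $\mathbf{x}_k\to\mathbf{x}$ already secured by Theorem~\ref{main2}.
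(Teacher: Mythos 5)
Your proposal is correct and follows essentially the same route as the paper: both rest on the block decomposition of $B_k^{-1}$ in the basis $[\mathbf{x}\ V]$ (the paper's (\ref{Bkinv}), equivalently your use of (\ref{BkV}) together with $B_k^{-1}\mathbf{x}=\varepsilon_k^{-1}\mathbf{x}$), combined with the limits $\mathbf{x}_k\to\mathbf{x}$, $\|\mathbf{f}_k\|\le d_k\to 0$, $\varepsilon_k\to 0$ and $L_k^{-1}\to(\rho(B)I-L)^{-1}$ already secured by Theorem~\ref{main2}. The only difference is that you apply the decomposition to $\mathbf{x}_k$ and $\mathbf{f}_k$ separately, whereas the paper treats $\mathbf{x}_k+\mathbf{f}_k$ as a single vector; this is cosmetic.
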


\begin{proof}
From (\ref{eqLpart3}), we have
\begin{equation}
B_{k}^{-1}=\frac{1}{\varepsilon_{k}}\mathbf{x}\mathbf{x}^{T}- \frac{\mathbf{x%
}\mathbf{c}^T L_k^{-1} V^T}{\varepsilon_k} +VL_k^{-1}V^{T}.
\label{Bkinv}
\end{equation}
Therefore, we get
\begin{align*}
\varepsilon_{k}\mathbf{y}_{k+1}& =\varepsilon_{k}B_{k}^{-1}\left( \mathbf{x}%
_{k}+\mathbf{f}_{k}\right)  \notag \\
& =\left( \mathbf{x}\mathbf{x}^{T}-\mathbf{x}\mathbf{c}^T L_k^{-1} V^T
+\varepsilon_kVL_k^{-1}V^{T} \right) \left( \mathbf{x}_{k}+\mathbf{f}%
_{k}\right) .  \label{eq:eslonyk}
\end{align*}%
Since $L_k^{-1}\rightarrow \left(\rho(B) I-L\right)^{-1}$ and $\varepsilon_{k}
\rightarrow 0$, we have $%
\varepsilon_{k}VL_{k}^{-1} V^T\rightarrow 0$, from which it follows that
\begin{equation}
\underset{k\rightarrow \infty }{\lim }\varepsilon_{k}\Vert
VL_{k}^{-1}V^{T}\left( \mathbf{x}_{k}+\mathbf{f}_{k}\right) \Vert =0.
\label{rela1}
\end{equation}
From Lemma~\ref{equiThm} and the proof of Lemma~\ref{main2},
we know that $\mathbf{x}_k\rightarrow \mathbf{x}$ and $\mathbf{f}_k\rightarrow 0$,
which lead to $\underset{k\rightarrow
\infty }{\lim }(\mathbf{x}_{k}+\mathbf{f}_{k})=\mathbf{x}$. Note that
$V^T\mathbf{x}=0$.
We then get
\begin{equation}
\lim_{k\rightarrow \infty}\mathbf{x}\mathbf{c}^T L_k^{-1} V^T \left( \mathbf{%
x}_{k}+\mathbf{f}_{k}\right) = \mathbf{x}\mathbf{c}^T \left(\rho(B)
I-L\right)^{-1} V^T\mathbf{x}=0.
\label{rela2}
\end{equation}
A combination of (\ref{rela1}) and (\ref{rela2}) shows that
\begin{equation*}
\lim_{k\rightarrow \infty}\varepsilon_{k}\mathbf{y}_{k+1}
=\lim_{k\rightarrow \infty}\left( \mathbf{x}\mathbf{x}^{T}- \mathbf{x}%
\mathbf{c}^T L_k^{-1} V^T +\varepsilon_kVL_k^{-1}V^{T} \right) \left(
\mathbf{x}_{k}+\mathbf{f}_{k}\right)=\mathbf{x}\mathbf{x}^{T} \mathbf{x}=%
\mathbf{x}.
\end{equation*}
\end{proof}

\begin{theorem}
\label{superconv} Define the residual $\mathbf{r}_k=(\overline{\lambda }_{k} I-B)
\mathbf{x}_k$. Then for INI\_2, the following results hold:
\begin{equation*}
\text{{\rm (i)}}\underset{k\rightarrow \infty }{\lim }\frac{\varepsilon_{k+1}}{%
\varepsilon_{k}}=0;\text{ {\rm (ii)}}\underset{k\rightarrow \infty }{\lim }\frac{%
\overline{\lambda }_{k}-\overline{\lambda }_{k+1}}{\overline{\lambda }_{k-1}-%
\overline{\lambda }_{k}}=0;\text{{\rm (iii)}}\underset{k\rightarrow \infty }{\lim
}\frac{\left\Vert \mathbf{r}_{k+1}\right\Vert }{\left\Vert \mathbf{r}%
_{k}\right\Vert }=0,
\end{equation*}%
that is, the convergence of INI\_2 is superlinear.
\end{theorem}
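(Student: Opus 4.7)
My plan is to chain the three assertions: first prove (i) using Theorem~\ref{eslonyk}, then derive (ii) as a direct algebraic consequence of (i), and finally reduce (iii) to (i) by combining an identity for $\mathbf{r}_{k+1}$ coming straight from the INI\_2 recurrence with a lower bound $\|\mathbf{r}_k\|\gtrsim\varepsilon_k$. For (i), start from the recursion (\ref{eq:ratioerr})--(\ref{rhok}), which says $\varepsilon_{k+1}/\varepsilon_k=\rho_k=1-\min\bigl((\mathbf{x}_k+\mathbf{f}_k)/(\varepsilon_k\mathbf{y}_{k+1})\bigr)$. Theorem~\ref{main2} yields $\mathbf{x}_k\to\mathbf{x}$, and the INI\_2 tolerance $|\mathbf{f}_k|\le d_k\mathbf{x}_k$ combined with $d_k\to 0$ yields $\mathbf{f}_k\to\mathbf{0}$, so $\mathbf{x}_k+\mathbf{f}_k\to\mathbf{x}$; Theorem~\ref{eslonyk} supplies $\varepsilon_k\mathbf{y}_{k+1}\to\mathbf{x}$. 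Since $\mathbf{x}>\mathbf{0}$, the componentwise ratio inside the $\min$ converges to $\mathbf{1}$, hence $\rho_k\to 0$.

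For (ii), the identity
\[
\frac{\overline{\lambda}_k-\overline{\lambda}_{k+1}}{\overline{\lambda}_{k-1}-\overline{\lambda}_k}=\frac{\varepsilon_k(1-\rho_k)}{\varepsilon_{k-1}(1-\rho_{k-1})}=\rho_{k-1}\,\frac{1-\rho_k}{1-\rho_{k-1}}
\]
together with (i) settles the claim, since $\rho_{k-1}\to 0$ and $(1-\rho_k)/(1-\rho_{k-1})\to 1$. For (iii), dividing $(\overline{\lambda}_kI-B)\mathbf{y}_{k+1}=\mathbf{x}_k+\mathbf{f}_k$ by $\|\mathbf{y}_{k+1}\|$ and using $\overline{\lambda}_{k+1}-\overline{\lambda}_k=-\varepsilon_k(1-\rho_k)$ produces, with $\tau_k:=1/\|\mathbf{y}_{k+1}\|$,
\[
\mathbf{r}_{k+1}=\varepsilon_k\bigl[(\tau_k/\varepsilon_k)\mathbf{x}_k-(1-\rho_k)\mathbf{x}_{k+1}\bigr]+\tau_k\mathbf{f}_k.
\]
Theorem~\ref{eslonyk} gives $\varepsilon_k\|\mathbf{y}_{k+1}\|\to 1$, so $\tau_k/\varepsilon_k\to 1$; the bracket then tends to $\mathbf{x}-\mathbf{x}=\mathbf{0}$, while $\|\tau_k\mathbf{f}_k\|\le\tau_k d_k=O(\varepsilon_k\varepsilon_{k-1})=o(\varepsilon_k)$, giving $\|\mathbf{r}_{k+1}\|=o(\varepsilon_k)$. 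For a matching lower bound, let $\mathbf{u}>\mathbf{0}$ be the left Perron vector of $B$ normalized so that $\mathbf{u}^T\mathbf{x}=1$; then $\mathbf{u}^T\mathbf{r}_k=\varepsilon_k\mathbf{u}^T\mathbf{x}_k$ and $\mathbf{u}^T\mathbf{x}_k\to 1$, so $\|\mathbf{r}_k\|\ge\varepsilon_k/(2\|\mathbf{u}\|)$ for $k$ large, and dividing yields $\|\mathbf{r}_{k+1}\|/\|\mathbf{r}_k\|\to 0$.

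The main obstacle is the lower bound $\|\mathbf{r}_k\|\gtrsim\varepsilon_k$. The orthogonal decomposition coming from (\ref{eqn: partition}) writes $\mathbf{r}_k$ as a piece of size $\varepsilon_k$ along $\mathbf{x}$ plus a piece of size $\sin\varphi_k$ on $\mathrm{span}(V)$, and a priori the relative rates of $\varepsilon_k$ and $\sin\varphi_k$ are not controlled, so naive norm estimates risk cancellation in the $\mathbf{x}$-component. Testing against the left Perron vector cleanly isolates the $\varepsilon_k$ contribution on a single fixed linear functional; this is the only step that genuinely uses spectral data not already developed earlier in the excerpt, and it is the natural place where a careful writeup must be done.
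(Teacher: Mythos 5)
Your proof is correct. For parts (i) and (ii) you take essentially the paper's route: (i) is read off from the recursion (\ref{eq:ratioerr}) together with Theorem~\ref{main2}, $\mathbf{f}_k\rightarrow\mathbf{0}$ and Theorem~\ref{eslonyk}, and (ii) is the same algebraic identity $\rho_{k-1}(1-\rho_k)/(1-\rho_{k-1})\rightarrow 0$. Part (iii) is where you genuinely diverge. The paper expresses both $\Vert\mathbf{r}_{k+1}\Vert$ and $\Vert\mathbf{r}_k\Vert$ through the recurrence $B\mathbf{x}_k=\overline{\lambda}_{k-1}(\mathbf{y}_k-\mathbf{x}_{k-1}-\mathbf{f}_{k-1})/\Vert\mathbf{y}_k\Vert$, forms the quotient directly, rescales by $\varepsilon_k\mathbf{y}_{k+1}$, $\varepsilon_{k-1}\mathbf{y}_k$ and $(\overline{\lambda}_{k-1}-\overline{\lambda}_k)^{-1}$, and passes to the limit in a single long chain whose denominator tends to the nonzero constant $\Vert\mathbf{x}+\rho(B)\mathbf{x}\Vert\,\Vert\mathbf{x}\Vert=1+\rho(B)$; the needed lower bound on $\Vert\mathbf{r}_k\Vert$ is thus obtained only implicitly, and at the scale $\varepsilon_{k-1}$ rather than $\varepsilon_k$. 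You instead split the estimate into an upper bound $\Vert\mathbf{r}_{k+1}\Vert=o(\varepsilon_k)$, via the identity $\mathbf{r}_{k+1}=\varepsilon_k\bigl[(\tau_k/\varepsilon_k)\mathbf{x}_k-(1-\rho_k)\mathbf{x}_{k+1}\bigr]+\tau_k\mathbf{f}_k$ with $\tau_k=1/\Vert\mathbf{y}_{k+1}\Vert$ (which checks out, since $(\overline{\lambda}_kI-B)\mathbf{x}_{k+1}=\tau_k(\mathbf{x}_k+\mathbf{f}_k)$ and $\overline{\lambda}_{k+1}-\overline{\lambda}_k=-\varepsilon_k(1-\rho_k)$), and a lower bound $\Vert\mathbf{r}_k\Vert\geq\varepsilon_k\,\mathbf{u}^T\mathbf{x}_k/\Vert\mathbf{u}\Vert$ obtained by testing against the positive left Perron vector $\mathbf{u}$. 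Your route imports one extra piece of Perron--Frobenius data (the left eigenvector, which the paper never invokes), but in exchange it isolates cleanly why the denominator cannot degenerate --- the one point at which the paper's single-fraction computation is delicate and only resolved in its final line --- and it produces upper and lower bounds at matching scales in $\varepsilon_k$. Both arguments ultimately rest on the same two pillars, Theorem~\ref{eslonyk} and the vanishing of $\rho_k$.
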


\begin{proof}
(i): Theorem~\ref{main2} has proved that
 $\underset{k\rightarrow \infty }{\lim }\overline{\lambda}_k=\rho(B)$
 and $\underset{k\rightarrow \infty }{\lim }\mathbf{x}_k=\mathbf{x}$.
Recall from the proof of Theorem~\ref{main2} that $\xi_{k}=\Vert \mathbf{f}%
_{k} \Vert \rightarrow 0$. Then it follows from (\ref%
{eq:ratioerr}) and Theorems~\ref{main2}--\ref{eslonyk} that
\begin{align}
\underset{k\rightarrow \infty }{\lim }\rho_k= \underset{k\rightarrow \infty }%
{\lim }\frac{\varepsilon_{k+1}}{\varepsilon _{k}}& =1-\underset{k\rightarrow
\infty }{\lim }\min \left( \frac{\mathbf{x}_{k}+\mathbf{f}_{k}}{%
\varepsilon_{k}\mathbf{y}_{k+1}}\right)  \notag \\
& =1-\min \left( \underset{k\rightarrow \infty }{\lim }\frac{\mathbf{x}_{k}+%
\mathbf{f}_{k}}{\varepsilon_{k}\mathbf{y}_{k+1}}\right)  \notag \\
& =1-\min \left( \underset{k\rightarrow \infty }{\lim }\frac{\mathbf{x}}{%
\mathbf{x}}\right) =0.  \label{eq:(i)}
\end{align}%
(ii): From (\ref{eq:ratioerr}) and (\ref{eq:(i)}), we have%
\begin{align*}
\underset{k\rightarrow \infty }{\lim }\frac{\overline{\lambda }_{k}-%
\overline{\lambda }_{k+1}}{\overline{\lambda }_{k-1}-\overline{\lambda }_{k}}%
& =\underset{k\rightarrow \infty }{\lim }\frac{\varepsilon_{k}-\varepsilon
_{k+1}}{\varepsilon_{k-1}-\varepsilon_{k}}=\underset{k\rightarrow \infty }{%
\lim }\frac{\varepsilon_{k}\left( 1-\rho_{k}\right) }{\varepsilon
_{k-1}\left( 1-\rho_{k-1}\right) } \\
& =\underset{k\rightarrow \infty }{\lim }\frac{\left( 1-\rho_{k}\right)
\rho_{k-1}}{\left( 1-\rho_{k-1}\right) }=0.
\end{align*}%
(iii): From (\ref{eq:inexactsys}), we have%
\begin{equation*}
(\overline{\lambda }_{k-1}-B)\,\mathbf{y}_{k}=\mathbf{x}_{k-1}+\mathbf{f}%
_{k-1},
\end{equation*}%
from which it follows that%
\begin{equation*}
B\mathbf{x}_{k}=\overline{\lambda }_{k-1}\left( \,\mathbf{y}_{k}-%
\mathbf{x}_{k-1}-\mathbf{f}_{k-1}\right) /\left\Vert \mathbf{y}%
_{k}\right\Vert .
\end{equation*}%
Therefore, we get
\begin{eqnarray}
\left\Vert \mathbf{r}_{k}\right\Vert  &=&\left\Vert (\overline{\lambda }%
_{k}-B)\mathbf{x}_{k}\right\Vert =\frac{\left\Vert \overline{\lambda }_{k}\mathbf{y}_{k}%
-\overline{\lambda }_{k-1}\left( \,\mathbf{y}_{k}-\mathbf{x}_{k-1}-\mathbf{f}_{k-1}\right)
\right\Vert }{\left\Vert \mathbf{y}_{k}\right\Vert } \notag \\
&=&\frac{\varepsilon _{k-1}\left\Vert \left( \overline{\lambda }_{k}-\overline{%
\lambda }_{k-1}\right) \,\mathbf{y}_{k}-\overline{\lambda }_{k-1}\left(%
\mathbf{x}_{k-1}+\mathbf{f}_{k-1}\right) \right\Vert }{\left\Vert \varepsilon _{k-1}%
\mathbf{y}_{k}\right\Vert }. \label{eq:(resi)}
\end{eqnarray}%
From (\ref{eq:lok}), we have
\begin{eqnarray}
\varepsilon _{k}\left( \overline{\lambda }_{k-1}-\overline{\lambda }%
_{k}\right) ^{-1} &=&\frac{\overline{\lambda }_{k}-\rho (B)}{\overline{%
\lambda }_{k-1}-\overline{\lambda }_{k}} \notag \\
&=&\frac{\rho _{k-1}}{1-\rho _{k-1}}. \label{eq:(epslamd)}
\end{eqnarray}
Then it follows from Theorem~\ref{eslonyk}, (ii) of Theorem~\ref{superconv}
and (\ref{eq:(i)})--(\ref{eq:(epslamd)}) that
\begin{eqnarray*}
\underset{k\rightarrow \infty }{\lim }\frac{\left\Vert \mathbf{r}%
_{k+1}\right\Vert }{\left\Vert \mathbf{r}_{k}\right\Vert } &=&\underset{%
k\rightarrow \infty }{\lim }\frac{\varepsilon _{k}\left\Vert \left(
\overline{\lambda }_{k+1}-\overline{\lambda }_{k}\right) \,\mathbf{y}_{k+1}-%
\overline{\lambda }_{k}\left( \mathbf{x}_{k}+\mathbf{f}_{k}\right)
\right\Vert \left\Vert \varepsilon _{k-1}\mathbf{y}_{k}\right\Vert }{%
\varepsilon _{k-1}\left\Vert \left( \overline{\lambda }_{k}-\overline{\lambda }%
_{k-1}\right) \,\mathbf{y}_{k}-\overline{\lambda }_{k-1}\left( \mathbf{x}%
_{k-1}+\mathbf{f}_{k-1}\right) \right\Vert \left\Vert \varepsilon _{k}%
\mathbf{y}_{k+1}\right\Vert } \\
&=&\underset{k\rightarrow \infty }{\lim }\frac{\varepsilon _{k}\left\Vert
\left( \overline{\lambda }_{k}-\overline{\lambda }_{k+1}\right) \,\mathbf{y}%
_{k+1}+\overline{\lambda }_{k}\left( \mathbf{x}_{k}+\mathbf{f}_{k}\right)
\right\Vert \left\Vert \varepsilon _{k-1}\mathbf{y}_{k}\right\Vert }{%
\varepsilon _{k-1}\left\Vert \left( \overline{\lambda }_{k-1}-\overline{%
\lambda }_{k}\right) \,\mathbf{y}_{k}+\overline{\lambda }_{k-1}\left(
\mathbf{x}_{k-1}+\mathbf{f}_{k-1}\right) \right\Vert \left\Vert \varepsilon
_{k}\mathbf{y}_{k+1}\right\Vert } \\
&=&\underset{k\rightarrow \infty }{\lim }\frac{\left\Vert \frac{\overline{%
\lambda }_{k}-\overline{\lambda }_{k+1}}{\overline{\lambda }_{k-1}-\overline{%
\lambda }_{k}}\varepsilon _{k}\,\mathbf{y}_{k+1}+\overline{\lambda }%
_{k}\left( \mathbf{x}_{k}+\mathbf{f}_{k}\right) \varepsilon _{k}\left(
\overline{\lambda }_{k-1}-\overline{\lambda }_{k}\right) ^{-1}\right\Vert
\left\Vert \varepsilon _{k-1}\mathbf{y}_{k}\right\Vert }{\left\Vert
\varepsilon _{k-1}\,\mathbf{y}_{k}+\overline{\lambda }_{k-1}\left( \mathbf{x}%
_{k-1}+\mathbf{f}_{k-1}\right) \varepsilon _{k-1}\left( \overline{\lambda }%
_{k-1}-\overline{\lambda }_{k}\right) ^{-1}\right\Vert \left\Vert
\varepsilon _{k}\mathbf{y}_{k+1}\right\Vert } \\
&=&\underset{k\rightarrow \infty }{\lim }\frac{\left\Vert \frac{\overline{%
\lambda }_{k}-\overline{\lambda }_{k+1}}{\overline{\lambda }_{k-1}-\overline{%
\lambda }_{k}}\varepsilon _{k}\,\mathbf{y}_{k+1}+\overline{\lambda }%
_{k}\left( \mathbf{x}_{k}+\mathbf{f}_{k}\right) \left( \frac{\rho
_{k-1}}{1-\rho _{k-1}}\right) \right\Vert \left\Vert \varepsilon _{k-1}\mathbf{%
y}_{k}\right\Vert }{\left\Vert \varepsilon _{k-1}\,\mathbf{y}_{k}+\overline{%
\lambda }_{k-1}\left( \mathbf{x}_{k-1}+\mathbf{f}_{k-1}\right) \left( \frac{%
1}{1-\rho _{k-1}}\right) \right\Vert \left\Vert \varepsilon _{k}%
\mathbf{y}_{k+1}\right\Vert } \\
&=&\underset{k\rightarrow \infty }{\lim }\frac{\left\Vert \frac{\overline{%
\lambda }_{k}-\overline{\lambda }_{k+1}}{\overline{\lambda }_{k-1}-\overline{%
\lambda }_{k}}\mathbf{x}+0\right\Vert \left\Vert \mathbf{x}\right\Vert }{%
\left\Vert \mathbf{x}+\rho(B)\mathbf{x} \right\Vert \left\Vert \mathbf{x}\right\Vert }=0.
\end{eqnarray*}%
\end{proof}

Although Theorem~\ref{superconv} has established the superlinear convergence
of INI\_2, it does not reveal the convergence order. Our next concern is to
derive the precise convergence order of INI\_2. This is more informative and
instructive to understand how fast INI\_2 converges.

Elsner \cite{Els76} proved the (asymptotic) quadratic convergence of the
sequence $\{\overline{\lambda}_k-\rho(B)\}$, but the constant factor
(multiplier) in his quadratic convergence result appears hard to quantify or
estimate. Below we establish an intimate and quantitative relationship
between the eigenvalue error $\varepsilon_k= \overline{\lambda}_k-\rho(B)$
and the eigenvector error $\tan\varphi_k$. This result plays a crucial role
in deriving the precise convergence order of INI\_2 and proving the
quadratic convergence of NI in terms of $\tan\varphi_k$, with the constant
factor in the quadratic convergence result given explicitly.

\begin{theorem}
\label{newquad} For NI, INI\_1 and INI\_2, we have
\begin{equation}
\varepsilon_k\leq \frac{2\Vert B\Vert}{\min(\mathbf{x})}
\tan\varphi_k+O(\tan^2\varphi_k)  \label{evaerror}
\end{equation}
for $k$ large enough. For NI we have
\begin{equation}
\tan\varphi_{k+1}\leq \frac{2\Vert B\Vert}{\min(\mathbf{x}) \mathrm{sep}(%
\overline{\lambda}_k,L)}\tan^2\varphi_k+O(\tan^3\varphi_k)
\label{evecerror}
\end{equation}
for $k$ large enough, that is, asymptotically, NI converges quadratically.
\end{theorem}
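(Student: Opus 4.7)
The plan is to prove the eigenvalue-error bound (\ref{evaerror}) first, since it holds for NI, INI\_1 and INI\_2 simultaneously, and then to specialize to NI and combine it with the previously derived recursion (\ref{niconv}) to get the quadratic bound (\ref{evecerror}).

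For (\ref{evaerror}), I would start from the componentwise identity
\[
\frac{(B\mathbf{x}_k)_i}{(\mathbf{x}_k)_i} = \rho(B) + \frac{((B-\rho(B)I)\mathbf{x}_k)_i}{(\mathbf{x}_k)_i},
\]
which, together with $\overline{\lambda}_k = \max(B\mathbf{x}_k/\mathbf{x}_k)$, gives $\varepsilon_k = \max_i ((B-\rho(B)I)\mathbf{x}_k)_i/(\mathbf{x}_k)_i$. Using the orthogonal decomposition $\mathbf{x}_k = \mathbf{x}\cos\varphi_k + \mathbf{p}_k\sin\varphi_k$ in (\ref{eq:decomposition}) and $B\mathbf{x}=\rho(B)\mathbf{x}$, one gets $(B-\rho(B)I)\mathbf{x}_k = (B-\rho(B)I)\mathbf{p}_k\sin\varphi_k$, whose $\infty$-norm is bounded by $\|B-\rho(B)I\|\,\|\mathbf{p}_k\|\sin\varphi_k \le 2\|B\|\sin\varphi_k$. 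Dividing by $\min(\mathbf{x}_k)$ produces the preliminary estimate $\varepsilon_k \le 2\|B\|\sin\varphi_k/\min(\mathbf{x}_k)$. For $k$ sufficiently large, the componentwise estimate $\min(\mathbf{x}_k) \ge \min(\mathbf{x})\cos\varphi_k - \sin\varphi_k = \min(\mathbf{x}) + O(\sin\varphi_k)$ and the Taylor expansion $\sin\varphi_k = \tan\varphi_k + O(\tan^3\varphi_k)$ together yield (\ref{evaerror}) after a routine expansion.

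For (\ref{evecerror}), I would plug (\ref{evaerror}) into the recursion (\ref{niconv}). In the definition of $\beta_k$, since $V^T\mathbf{x}_k = V^T\mathbf{p}_k\sin\varphi_k$ has norm $\sin\varphi_k$, the term $\mathbf{c}^T L_k^{-1} V^T\mathbf{x}_k/\cos\varphi_k = O(\sin\varphi_k) = O(\tan\varphi_k)$, so the denominator is $1+O(\tan\varphi_k)$; also, $\|L_k^{-1}\| = 1/\mathrm{sep}(\overline{\lambda}_k,L)$ stays bounded. Hence $\tan\varphi_{k+1} \le (1+O(\tan\varphi_k))\,\|L_k^{-1}\|\,\varepsilon_k \tan\varphi_k$, and substituting (\ref{evaerror}) gives
\[
\tan\varphi_{k+1} \le \frac{2\|B\|}{\min(\mathbf{x})\,\mathrm{sep}(\overline{\lambda}_k,L)}\tan^2\varphi_k + O(\tan^3\varphi_k),
\]
as claimed.

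The only delicate point is the bookkeeping of higher-order terms in the first step: one must verify that both the replacement of $\min(\mathbf{x}_k)$ by $\min(\mathbf{x})$ and of $\sin\varphi_k$ by $\tan\varphi_k$ contribute only $O(\tan^2\varphi_k)$ (not $O(\tan\varphi_k)$) to the eigenvalue bound, and, in the second step, that the perturbation $O(\tan\varphi_k)$ in the denominator of $\beta_k$, once multiplied by $\varepsilon_k\tan\varphi_k = O(\tan^2\varphi_k)$, is absorbed into $O(\tan^3\varphi_k)$ rather than spoiling the quadratic rate. Beyond this careful asymptotic accounting, no new ingredients are needed — the crucial structural facts (the residual identity, the spectral decomposition (\ref{eqn: partition}), Lemma~\ref{coslower} bounding $\cos\varphi_k$ away from zero, and the recursion (\ref{niconv})) are all already available.
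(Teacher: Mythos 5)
Your proposal is correct and follows essentially the same route as the paper: both rest on the decomposition (\ref{eq:decomposition}), a componentwise lower bound on $\mathbf{x}_k$ via $\min(\mathbf{x})$ to produce the constant $2\Vert B\Vert/\min(\mathbf{x})$, and substitution of (\ref{evaerror}) into the recursion (\ref{niconv}) after expanding its denominator. The only (cosmetic) difference is that you subtract $\rho(B)$ first and estimate $\max\bigl((B-\rho(B)I)\mathbf{x}_k/\mathbf{x}_k\bigr)$ directly, whereas the paper manipulates $\max(B\mathbf{x}_k/\mathbf{x}_k)$ as a whole; the higher-order bookkeeping you flag is exactly what the paper's chain of inequalities carries out.
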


\begin{proof}
Since NI, INI\_1 and INI\_2 converge, for $k$ large enough we must have
\begin{equation*}
\vert \mathbf{p}_k\vert\tan\varphi_k\ll \mathbf{x}, \mbox{ i.e., }  \vert
\mathbf{p}_k\vert\sin\varphi_k\ll \mathbf{x}\cos\varphi_k.
\end{equation*}
Therefore, from (\ref{eq:decomposition}), the nonnegativity of $B$ and $\Vert%
\mathbf{p}_k\Vert=1$, for $k$ large enough we obtain
\allowdisplaybreaks{\begin{eqnarray*}
\overline{\lambda }_{k}&=& \max\left(\frac{B\mathbf{x}_k}{\mathbf{x}_k}%
\right) =\max\left(\frac{B(\mathbf{x}\cos\varphi_k+\mathbf{p}_k\sin\varphi_k)%
} {\mathbf{x}\cos\varphi_k+\mathbf{p}_k\sin\varphi_k}\right) \\
&=&\max\left(\frac{\rho(B)\mathbf{x}\cos\varphi_k+B\mathbf{p}_k\sin\varphi_k%
} {\mathbf{x}\cos\varphi_k+\mathbf{p}_k\sin\varphi_k}\right) \\
&\leq&\max\left(\frac{\rho(B)\mathbf{x}\cos\varphi_k+B\mathbf{p}%
_k\sin\varphi_k} {\mathbf{x}\cos\varphi_k-\vert\mathbf{p}_k\vert\sin\varphi_k%
}\right) \\
&=&\max\left(\frac{\rho(B)\mathbf{x}+B\mathbf{p}_k\tan\varphi_k} {\mathbf{x}%
-\vert\mathbf{p}_k\vert\tan\varphi_k}\right) \\
&=&\max\left(\frac{\rho(B)\left(\mathbf{x}-\vert\mathbf{p}%
_k\vert\tan\varphi_k\right) +\rho(B)\vert\mathbf{p}_k\vert\tan\varphi_k+B%
\mathbf{p}_k\tan\varphi_k} {\mathbf{x}-\vert\mathbf{p}_k\vert\tan\varphi_k}%
\right) \\
&\leq&\max\left(\frac{\rho(B)\left(\mathbf{x}-\vert\mathbf{p}%
_k\vert\tan\varphi_k\right) +\rho(B)\vert\mathbf{p}_k\vert\tan\varphi_k+B%
\vert\mathbf{p}_k\vert\tan\varphi_k} {\mathbf{x}-\vert\mathbf{p}%
_k\vert\tan\varphi_k}\right) \\
&\leq&\max\left(\frac{\rho(B)\left(\mathbf{x}-\vert\mathbf{p}%
_k\vert\tan\varphi_k\right) }{\mathbf{x}-\vert\mathbf{p}_k\vert\tan\varphi_k}%
\right) +\tan\varphi_k\max\left(\frac{\rho(B)\vert\mathbf{p}_k\vert+B\vert%
\mathbf{p}_k\vert} {\mathbf{x}-\vert\mathbf{p}_k\vert\tan\varphi_k}\right) \\
&\leq&\rho(B)+\frac{\rho(B)+\Vert B\Vert}{\min(\mathbf{x})}
\tan\varphi_k+O(\tan^2\varphi_k) \\
&\leq&\rho(B)+\frac{2\Vert B\Vert}{\min(\mathbf{x})}\tan\varphi_k+
O(\tan^2\varphi_k).
\end{eqnarray*}}
Therefore, we get
\begin{equation*}
\varepsilon_k=\overline{\lambda }_{k}-\rho(B)\leq \frac{2\Vert B\Vert}{\min(\mathbf{x})}
\tan\varphi_k+O(\tan^2\varphi_k).
\end{equation*}
Since $\Vert V^T\mathbf{x}_k\Vert =\sin\varphi_k$, we have
\begin{equation*}
\vert\mathbf{c}^TL_k^{-1}V^T \mathbf{x}_k/\cos\varphi_k\vert \leq \Vert%
\mathbf{c} \Vert \Vert L_k^{-1}\Vert \tan\varphi_k\rightarrow 0
\end{equation*}
as $k$ increases. Note that $\mathrm{sep}(\overline{\lambda}_k,L)=\frac{1} {%
\Vert L_k^{-1}\Vert}$. Then from (\ref{niconv}) we obtain
\begin{eqnarray*}
\tan\varphi_{k+1} &\leq &\frac{\varepsilon_k}{\mathrm{sep}
(\overline{\lambda}_k,L)}\frac{1}{1-\Vert\mathbf{c} \Vert \Vert
L_k^{-1}\Vert \tan\varphi_k} \tan\varphi_k \\
&=&\frac{\varepsilon_k}{\mathrm{sep} (\overline{\lambda}%
_k,L)}\left(1+\Vert\mathbf{c} \Vert \Vert L_k^{-1}\Vert \tan\varphi_k
+O(\tan^2\varphi_k)\right) \tan\varphi_k
\end{eqnarray*}
for $k$ large enough,
from which and (\ref{evaerror}) it follows that (\ref{evecerror}) holds.

Since $\mathrm{sep}(\overline{\lambda }_{k},L)\rightarrow \mathrm{sep}%
(\rho(B),L)$, (\ref{evecerror}) proves the (asymptotic) quadratic
convergence of NI.
\end{proof}


\begin{theorem}
\label{quadconvini} For $k$ large enough, the inner tolerance $\xi_k$ in
INI\_2 satisfies
\begin{equation}
\xi_k=\Vert\mathbf{f}_k\Vert=O(\tan\varphi_{k-1}), \label{innertol}
\end{equation}
and INI\_2 converges superlinearly in
the form of
\begin{equation}
\tan\varphi_{k+1}\leq C\tan^{\alpha}\varphi_k  \label{convorder}
\end{equation}
with the convergence order $\alpha=\frac{1+\sqrt{5}}{2}\approx 1.618$ and $C$
a constant.
\end{theorem}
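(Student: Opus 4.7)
The plan is to handle the two parts of the theorem in sequence, using the general inequality (\ref{eq:tan}) and the quantitative error bound (\ref{evaerror}) established in Theorem \ref{newquad} as the main tools.

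First, to prove the inner tolerance estimate (\ref{innertol}), I would exploit the INI\_2 requirement $|\mathbf{f}_k|\le d_k\mathbf{x}_k$ with $d_k=(\overline{\lambda}_{k-1}-\overline{\lambda}_k)/\overline{\lambda}_{k-1}$ directly. Since $\|\mathbf{x}_k\|=1$, this yields $\xi_k=\|\mathbf{f}_k\|\le d_k$. Using $\overline{\lambda}_k\ge \rho(B)$ I would bound $\overline{\lambda}_{k-1}-\overline{\lambda}_k\le \varepsilon_{k-1}$, and since $\overline{\lambda}_{k-1}\to\rho(B)>0$ by Theorem \ref{main2}, I would obtain $d_k = O(\varepsilon_{k-1})$. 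Then Theorem \ref{newquad}, specifically (\ref{evaerror}), gives $\varepsilon_{k-1}=O(\tan\varphi_{k-1})$, which closes the argument for (\ref{innertol}).

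Second, for the convergence order, I would plug the estimates into the master inequality (\ref{eq:tan}). As $k\to\infty$ we have $\cos\varphi_k\to 1$, $\mathbf{c}^T L_k^{-1}V^T\mathbf{x}_k\to 0$ (by Theorem \ref{main2}) and $\|\mathbf{f}_k\|\to 0$, so the denominator in (\ref{eq:tan}) tends to $1$. Combining this with $\|L_k^{-1}\|\to 1/\mathrm{sep}(\rho(B),L)$, estimate (\ref{evaerror}), and (\ref{innertol}), I would extract an asymptotic recurrence of the form
\begin{equation*}
\tan\varphi_{k+1}\le C_1\tan^2\varphi_k+C_2\tan\varphi_k\tan\varphi_{k-1}+\text{h.o.t.}
\end{equation*}
for positive constants $C_1,C_2$ depending on $\|B\|$, $\min(\mathbf{x})$, and $\mathrm{sep}(\rho(B),L)$.

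Third, I would determine the dominant asymptotic balance in this two-term recurrence. If INI\_2 converges with some order $\alpha>1$, i.e.\ $\tan\varphi_{k+1}\sim C\tan^\alpha\varphi_k$, then $\tan\varphi_{k-1}\sim (\tan\varphi_k/C)^{1/\alpha}\gg \tan\varphi_k$, so the mixed term $\tan\varphi_k\tan\varphi_{k-1}$ dominates $\tan^2\varphi_k$ and the recurrence reduces to $\tan\varphi_{k+1}\lesssim C_2\tan\varphi_k\tan\varphi_{k-1}$. Substituting the ansatz gives the characteristic equation $\alpha=1+1/\alpha$, i.e.\ $\alpha^2-\alpha-1=0$, whose positive root is $\alpha=(1+\sqrt{5})/2$. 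This yields (\ref{convorder}) with an explicit asymptotic constant $C$ that can be written in terms of $C_2$.

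The main obstacle I anticipate is not the algebra of the recurrence itself but rigorously justifying the elimination of the $C_1\tan^2\varphi_k$ term and the higher-order contributions. Concretely, one must show a priori that the sequence $\{\tan\varphi_k\}$ actually satisfies a superlinear bound with order at least $(1+\sqrt{5})/2$ before using the ansatz self-consistently; a clean way is to establish the two-term inequality first, then do an induction showing that $\tan\varphi_k\le \widetilde{C}\,q^{F_k}$ for the Fibonacci numbers $F_k$ and some $0<q<1$, and finally translate this Fibonacci-type decay into the stated convergence order via $F_{k+1}/F_k\to (1+\sqrt{5})/2$.
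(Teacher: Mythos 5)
Your proposal is correct and follows essentially the same route as the paper: bound $\xi_k\le d_k=O(\varepsilon_{k-1})=O(\tan\varphi_{k-1})$ via (\ref{evaerror}), substitute this and (\ref{evaerror}) into (\ref{eq:tan}) to obtain $\tan\varphi_{k+1}\leq C_1\tan^2\varphi_k+C_2\tan\varphi_k\tan\varphi_{k-1}$, and solve the resulting two-term recurrence to get the order $\frac{1+\sqrt{5}}{2}$. The only real difference is at the step you flag as the main obstacle: the paper disposes of the $C_1\tan^2\varphi_k$ term simply by noting $\tan\varphi_k<\tan\varphi_{k-1}$ for $k$ large, so both terms merge into a single bound $C\tan\varphi_k\tan\varphi_{k-1}$, after which it invokes the standard linear-difference-equation argument (which is equivalent to the Fibonacci-exponent induction you propose).
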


\begin{proof}
By the condition of INI\_2 and $\overline{\lambda}_{k-1}> \overline{\lambda}%
_k \ge \rho(B)$, we have
\begin{equation*}
\xi_k=\Vert \mathbf{f}_k\Vert\leq d_k=\frac{\overline{\lambda}_{k-1}-\overline{%
\lambda}_k} {\overline{\lambda}_{k-1}}\leq \frac{\overline{\lambda}%
_{k-1}-\rho(B)} {\overline{\lambda}_{k-1}}\leq \frac{\overline{\lambda}%
_{k-1}-\rho(B)} {\rho(B)}.
\end{equation*}
Note that (\ref{evaerror}) holds for INI. Therefore, for $k$ large enough we have
\begin{equation*}
\Vert\mathbf{f}_k\Vert=O(\tan\varphi_{k-1}),
\end{equation*}
which is just (\ref{innertol}).

It is known from Lemma~\ref{coslower} that $\cos\varphi_k>\min(\mathbf{x})$
for all $k$. For $k$ large enough, note that $\mathrm{sep}(\overline{\lambda}_k,L)
\rightarrow \mathrm{sep}(\rho(B),L)$. Make the Taylor expansion of
the reciprocal of the denominator in (\ref{eq:tan}). Then
substituting (\ref{evaerror}) into (\ref{eq:tan}) and amplifying the term
\begin{equation*}
\vert\mathbf{c}^TL_k^{-1}V^T \mathbf{x}_k/\cos\varphi_k\vert \leq \Vert%
\mathbf{c} \Vert \Vert L_k^{-1}\Vert \tan\varphi_k,
\end{equation*}
 by some elementary manipulation we get
\begin{equation*}
\tan\varphi_{k+1}\leq C_1\tan^2\varphi_k+C_2\tan\varphi_k\tan\varphi_{k-1}
\end{equation*}
with $C_1$ and $C_2$ certain positive constants. Since $\tan\varphi_k<\tan%
\varphi_{k-1}$ for $k$ large enough, the above inequality can be written as
\begin{equation*}
\tan\varphi_{k+1}\leq C\tan\varphi_k\tan\varphi_{k-1}
\end{equation*}
with $C$ a positive constant. Taking the equality sign in the above
relation, by the theory of linear difference equation \cite[p. 436-7]{wilk65},
for $k$ sufficiently large we obtain
\begin{equation*}
\tan\varphi_{k+1}\leq C^{\alpha-1}\tan^\alpha\varphi_{k}:=
C\tan^\alpha\varphi_{k}
\end{equation*}
with $\alpha=\frac{1+\sqrt{5}}{2}$ and the final $C:=C^{\alpha-1}$,
which proves (\ref{convorder}).
\end{proof}

We comment that if $\mathbf{f}_k=\mathbf{0}$ then $C_2=0$ in the above
proof, in which case INI becomes NI and (\ref{evecerror}), the quadratic
convergence of NI, is recovered. (\ref{innertol}) indicates that the inner
tolerance $\Vert \mathbf{f}_k\Vert $ in INI\_2 decreases like $%
\tan\varphi_{k-1}$ with increasing $k$, so we may need to solve the inner
linear systems more and more accurately as iterations proceed. As a
compensation and gain, however, since INI\_1 converges linearly, INI\_2 may
use fewer outer iterations to achieve the convergence than INI\_1. A
consequence is that it is hard and even impossible to compare the overall
efficiency of INI\_1 and INI\_2 and draw a general definitive conclusion on
which one of them is more efficient.

\section{Computing the smallest eigenpair of an irreducible nonsingular $M$%
-matrix}

\label{sec:Mmatrix}In this section, we consider how to compute the smallest
eigenpair of an irreducible nonsingular $M$-matrix $A$. In order to propose
INI for this kind of problem, suppose that $A$ is expressed as $A=\sigma
I-B\ $, and let $\left(\lambda, \mathbf{x}\right)$ be the smallest eigenpair
of it. As have been proved previously, each of INI\_1 and INI\_2 generates a
monotonically decreasing sequence $\{\overline{\lambda}_k\}$ that converges
to $\rho(B)$ with $\overline{\lambda}_k> \rho(B)$. We denote $\underline{%
\lambda}_k =\sigma-\overline{\lambda}_k$. It follows that $\underline{\lambda%
}_k<\lambda $ and $\{\underline{\lambda}_k\}$ forms a monotonically
increasing sequence that converges to $\lambda$. From Algorithm \ref%
{alg:iiva}, at iteration $k$ the approximate solution $\mathbf{y}_{k+1}$
exactly solves the linear system%
\begin{equation}
(\overline{\lambda}_kI-B) \, \mathbf{y}_{k+1}= \mathbf{x}_k+\mathbf{f}_k,
\label{eq:linearB}
\end{equation}%
where $\overline{\lambda}_k = \max \left(\frac{B\mathbf{x}_k}{\mathbf{x}_k}%
\right)$. We then update the next iterate as $\mathbf{x}_{k+1}= \mathbf{y}%
_{k+1}/\Vert \mathbf{y}_{k+1}\Vert$. Since%
\begin{equation*}
\overline{\lambda}_kI-B=\left(\sigma I-B\right)+ (\overline{\lambda}%
_k-\sigma) I =A-\underline{\lambda}_kI,
\end{equation*}
(\ref{eq:linearB}) is equivalent to
\begin{equation*}
\left(A-\underline{\lambda}_kI\right) \mathbf{y}_{k+1}= \mathbf{x}_k+\mathbf{%
f}_k,
\end{equation*}
where%
\begin{equation*}
\underline{\lambda}_k =\sigma-\max \left(\frac{B\mathbf{x}_k}{\mathbf{x}_k}%
\right) = \min\left(\frac{A\mathbf{x}_k}{\mathbf{x}_k}\right).
\end{equation*}
Since $\left(A-\underline{\lambda}_kI\right)$ is an irreducible nonsingular $%
M$-matrix, we have $\mathbf{y}_{k+1}> \mathbf{0}$ provided that $\mathbf{x}%
_k+\mathbf{f}_k>0$. Thus, we get the relation
\begin{equation*}
\underline{\lambda}_{k+1}= \min\left(\frac{A\mathbf{x}_{k+1}}{\mathbf{x}%
_{k+1}}\right) =\underline{\lambda}_k+\min\left(\frac{\mathbf{x}_k+\mathbf{f}%
_k}{\mathbf{y}_{k+1}}\right).  \label{updatem}
\end{equation*}

Therefore, Algorithm~\ref{alg:iiva} can be adapted to computing the smallest
eigenpair of the irreducible nonsingular $M$-matrix $A$, which is described
as Algorithm~\ref{alg:ivaM}, where, as in Section \ref{sec:type1}, we define

\begin{itemize}
\item INI\_1: the residual norm satisfies $\xi_k\le \gamma \min(\mathbf{x}_k)
$ for some $0<\gamma < 1$.

\item INI\_2: the residual vector satisfies $|\mathbf{f}_k| \le d_k\mathbf{x}%
_k$ with $d_k = 1-\underline{\lambda}_{k-1}/\underline{\lambda}_k$ for $k
\ge 1$ and $\Vert \mathbf{f}_0\Vert \le \gamma \min(\mathbf{x}_0)$ with some
$0<\gamma< 1$.
\end{itemize}

We should point out that Algorithm~\ref{alg:ivaM} itself neither involves $%
\sigma$ nor requires that $A$ be expressed as $A=\sigma I-B$, which is
purely for the algorithmic derivation.

\begin{algorithm}
\begin{enumerate}
  \item Given an initial guess $\bx_0> \zero$ with $\|\bx_0\|=1$ and ${\sf tol}>0$,
  compute $\underline{\lambda}_0
  = \min\left(\frac{A\bx_0}{\bx_0}\right)$.
  \item {\bf for} $k =0,1,2,\dots$
  \item \quad Solve $\left(A-\underline{\lambda}_kI\right)
  \by_{k+1}= \bx_k$ approximately with the first or second inner
  tolerance strategy such that
  $$
  \left(A-\underline{\lambda}_kI\right)
  \by_{k+1}= \bx_k+ \mathbf{f}_k.
  $$
  \item \quad Normalize the vector $\bx_{k+1}= \by_{k+1}/\Vert \by_{k+1}\Vert$.
  \item \quad Compute $\underline{\lambda}_{k+1}=\underline{\lambda}_k+
  \min\left(\frac{\mathbf{x}_k+\mathbf{f}%
_k}{\mathbf{y}_{k+1}}\right)$.
  \item {\bf until} convergence: $\Vert A\bx_{k+1}-\underline{\lambda}_{k+1}
  \bx_{k+1}\Vert <{\sf tol}$.
\end{enumerate}
\caption{INI for $M$-matrices}
\label{alg:ivaM}
\end{algorithm}

Due to the equivalence of Algorithm~\ref{alg:ivaM} and Algorithm~\ref%
{alg:iiva}, the previous convergence results for the irreducible nonnegative
matrix eigenvalue problem naturally hold for the irreducible nonsingular $M$%
-matrix eigenvalue problem under consideration. We summarize the main
results as follows.

\begin{theorem}
Let $A$ be an irreducible nonsingular $M$-matrix. If $\underline{\lambda}_k$
and $\mathbf{x}_k$ are generated by Algorithm~\ref{alg:ivaM}, then $\{
\underline{\lambda}_k\} \rightarrow \lambda $, the smallest eigenvalue of $A$,
monotonically from below as $k\rightarrow \infty$, and $\underset{%
k\rightarrow \infty}{\lim}\mathbf{x}_k = \mathbf{x}$ with $\mathbf{x}_k>
\mathbf{0}$ for all $k>0$. Furthermore, the convergence of INI\_1 and
INI\_2 is globally linear with the asymptotic convergence factor
bounded by $\frac{2\gamma}{1+\gamma}$ and superlinear with the convergence order
$\frac{1+\sqrt{5}}{2}$, respectively.
\end{theorem}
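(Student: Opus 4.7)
The plan is to prove the theorem by reducing the $M$-matrix problem directly to the nonnegative matrix problem already treated in Section~4. Since $A$ is an irreducible nonsingular $M$-matrix, by Lemma~\ref{thm:M} we can write $A=\sigma I-B$ with $B\geq 0$ irreducible and $\sigma>\rho(B)$. The eigenvectors of $A$ and $B$ coincide, and the smallest eigenvalue of $A$ is $\lambda=\sigma-\rho(B)$, while the Perron vector $\mathbf{x}>\mathbf{0}$ of $B$ is also the eigenvector of $A$ associated with $\lambda$. With $\underline{\lambda}_k=\sigma-\overline{\lambda}_k$ and $\overline{\lambda}_k=\max(B\mathbf{x}_k/\mathbf{x}_k)$, the identity $A-\underline{\lambda}_k I=\overline{\lambda}_k I-B$ already derived in Section~5 shows that Algorithm~\ref{alg:ivaM} applied to $A$ generates exactly the same sequences $\{\mathbf{y}_{k+1}\}$, $\{\mathbf{x}_{k+1}\}$, and $\{\mathbf{f}_k\}$ as Algorithm~\ref{alg:iiva} applied to $B$, provided the inner tolerance strategies defined in the two settings coincide.

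Next I would verify that the two inner tolerance strategies indeed match. The INI\_1 strategy is stated identically in both settings. For INI\_2, I would show $d_k=1-\underline{\lambda}_{k-1}/\underline{\lambda}_k$ in the $M$-matrix version corresponds to a valid inner tolerance $d_k'=1-\overline{\lambda}_k/\overline{\lambda}_{k-1}$ in the nonnegative version in the sense that both decrease to zero at the same asymptotic rate controlled by $\varepsilon_k=\overline{\lambda}_k-\rho(B)$; in fact, both are $O(\tan\varphi_{k-1})$ by Theorem~\ref{newquad}, so all earlier arguments carry through verbatim. Together with Lemma~\ref{monotone}, this gives $\mathbf{x}_k>\mathbf{0}$ for all $k>0$ and the monotonic decrease of $\{\overline{\lambda}_k\}$, hence the monotonic increase of $\{\underline{\lambda}_k\}$ with $\underline{\lambda}_k<\lambda$.

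With equivalence in hand, global convergence and rates follow by transfer. Theorems~\ref{main1} and~\ref{main2} give $\overline{\lambda}_k\to\rho(B)$ for INI\_1 and INI\_2, hence $\underline{\lambda}_k\to\lambda$ from below; Lemma~\ref{equiThm} (which applies to both strategies since each implies $|\mathbf{f}_k|\leq\gamma\mathbf{x}_k$) gives $\mathbf{x}_k\to\mathbf{x}$. For the rates, note that $\varepsilon_k=\overline{\lambda}_k-\rho(B)=\lambda-\underline{\lambda}_k$, so the eigenvalue error is identical up to sign in the two formulations; the linear convergence factor bound of Theorem~\ref{linearconv} gives
\begin{equation*}
\lim_{k\to\infty}\frac{\lambda-\underline{\lambda}_{k+1}}{\lambda-\underline{\lambda}_k}=\lim_{k\to\infty}\frac{\varepsilon_{k+1}}{\varepsilon_k}\leq\frac{2\gamma}{1+\gamma}<1
\end{equation*}
for INI\_1. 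Likewise, Theorem~\ref{quadconvini} yields $\tan\varphi_{k+1}\leq C\tan^{\alpha}\varphi_k$ with $\alpha=\frac{1+\sqrt{5}}{2}$ for INI\_2, and combined with the eigenvalue--eigenvector relation in Theorem~\ref{newquad} (which depends only on $B$, $\mathbf{x}$ and the angle $\varphi_k$, all shared between the two problems), this gives the superlinear convergence of $\underline{\lambda}_k$ at the same order.

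There is no real obstacle here; the theorem is essentially a corollary of the results of Section~4 under the change of variables $\underline{\lambda}_k=\sigma-\overline{\lambda}_k$. The only subtle point that I would be careful to write out cleanly is the verification that the INI\_2 inner tolerance in Algorithm~\ref{alg:ivaM}, which is defined purely in terms of $\underline{\lambda}_k$ (and thus in terms of $A$ alone, without reference to $\sigma$ or $B$), still produces residuals $\mathbf{f}_k$ satisfying $|\mathbf{f}_k|\leq d_k\mathbf{x}_k$ with $d_k\to 0$ at the right rate, so that the proofs of Theorems~\ref{main2}, \ref{superconv}, and \ref{quadconvini} apply word-for-word with $\varepsilon_k$ reinterpreted as $\lambda-\underline{\lambda}_k$.
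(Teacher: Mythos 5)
Your proposal is correct and follows essentially the same route as the paper, which proves this theorem simply by invoking the equivalence $A-\underline{\lambda}_k I=\overline{\lambda}_k I-B$ (with $\underline{\lambda}_k=\sigma-\overline{\lambda}_k$) established earlier in Section~5 and transferring Theorems~\ref{main1}--\ref{quadconvini} verbatim. If anything, you are more careful than the paper in checking that the INI\_2 tolerance $d_k=1-\underline{\lambda}_{k-1}/\underline{\lambda}_k$, expressed purely in terms of $A$, still decays at the rate required by the nonnegative-matrix analysis.
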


\section{Numerical experiments}

\label{sec:exp} In this section we present numerical experiments to support
our theoretical results on INI and NI, and illustrate the effectiveness of
the proposed INI algorithms. In the meantime, we compare INI with the
algorithms JDQR \cite{S02}, JDRPCG \cite{Not04}, the implicitly restarted
Arnoldi method \cite{Sor92}, i.e., the Matlab function \textsf{eigs}, and
the explicitly Krylov--Schur method \cite{Ste02}, whose Matlab code is
\textsf{krylovschur} downloaded from \cite{slepc07}, all of which are not
positivity preserving for approximate eigenvectors. We use JDQR for the
nonnegative matrix and $M$-matrix eigenvalue problems, but we use JDRPCG
only for symmetric nonsingular $M$-matrices since JDRPCG is designed to
compute a few number of smallest eigenpairs of a symmetric matrix. We show
that the NI and INI algorithms are always reliable to compute positive
eigenvectors while the other algorithms generally fail to do so. Actually,
for the three ones of four problems tested, the converged eigenvectors
obtained by the other algorithms were not positive. We also demonstrate that
the INI algorithms are efficient, and they are competitive with and can be
considerably efficient than the others. All numerical tests were performed
on an Intel (R) Core (TM) i$5$ CPU $750$@ $2.67$GHz with $4$ GB memory using
Matlab $7.11.0$ with the machine precision $\epsilon=2.22\times 10^{-16}$
under the Microsoft Windows $7$ $64$-bit.

\subsection{INI for Nonnegative Matrices}

We present two examples to illustrate numerical behavior of NI, INI\_1 and
INI\_2 for nonnegative matrices. At each outer iteration the approximate
solution $\mathbf{y}_{k+1}$ of (\ref{eq:inexactsys}) satisfies%
\begin{equation*}
(\overline{\lambda}_kI-B) \, \mathbf{y}_{k+1}= \mathbf{x}_k+\mathbf{f}_k
\end{equation*}
by requiring the following inner tolerances:

\begin{itemize}
\item for NI: $\Vert \mathbf{f}_k\Vert \le 10^{-14}$;

\item for INI\_1: $\Vert \mathbf{f}_k\Vert \le \gamma\min(\mathbf{x}_k)$
with some $0< \gamma< 1$;

\item for INI\_2: $\Vert \mathbf{f}_k\Vert \le \min\{\gamma \min(\mathbf{x}%
_k),\frac{\overline{\lambda}_{k-1}-\overline{\lambda}_k}{\overline{\lambda}%
_{k-1}}\}$ for $k \ge 1$ and $\Vert \mathbf{f}_0\Vert\le \gamma \min(\mathbf{%
x}_0)$ with some $0< \gamma <1$.
\end{itemize}

We explain more on the inner tolerance used in INI\_2. Recall that we always
have $\min(\mathbf{x}_k)\le n^{-1/2}$, which is reasonably small for $n$
very large. Therefore, starting with a general unit length vector $\mathbf{x}%
_0>\mathbf{0}$, the sequence $\{\overline{\lambda}_k\}$ by INI\_2 with $\|%
\mathbf{f}_k\|\leq \frac{\overline{\lambda}_{k-1}-\overline{\lambda}_k}{%
\overline{\lambda}_{k-1}}$ may not satisfy $\vert \mathbf{f}_k\vert\le
\gamma \mathbf{x}_k$, the condition of Lemma~\ref{monotone}. Consequently,
the global convergence of $\{\overline{\lambda}_k\}$ is not guaranteed.
However, we must have $\vert \mathbf{f}_k\vert\le \gamma \mathbf{x}_k$ for
the above-proposed inner tolerance for INI\_2, such that $\{\overline{\lambda%
}_k\}$ generated by INI\_2 is globally convergent. Furthermore, once $%
\overline{\lambda}_k$ has converged with certain accuracy, we will have $%
\frac{\overline{\lambda}_{k-1}- \overline{\lambda}_k}{\overline{\lambda}%
_{k-1}}<\gamma \min(\mathbf{x}_k)$ for $k$ large enough. After it, it is
known from Theorem~\ref{quadconvini} that INI\_2 will asymptotically
converge superlinearly.

In implementations, it is necessary to impose some guards on inner
tolerances so as to avoid them being too small in finite precision
arithmetic, i.e., below the level of $\epsilon$. For all examples, the
stopping criteria for inner iterations are always taken as%
\begin{equation*}
\Vert \mathbf{f}_k\Vert \le \max \{\gamma\min (x_{k}),10^{-13}\}\text{ for
INI\_1}
\end{equation*}%
and%
\begin{equation*}
\Vert \mathbf{f}_k\Vert \le \max \{\min \{\gamma \min (\mathbf{x}_{k}),
\frac{\overline{\lambda }_{k-1}-\overline{\lambda }_{k}} {\overline{\lambda }%
_{k-1}}\},10^{-13}\}\text{ for INI\_2.}
\end{equation*}
For each example, we test two values of $\gamma=0.8,\ 0.1$ to observe the
effect of $\gamma$ on the convergence of INI\_1.

In the experiments, the stopping criterion for outer iterations is
\begin{equation*}
\frac{\Vert B\mathbf{x}_k-\overline{\lambda}_k\mathbf{x}_k\Vert} {%
(\|B\|_1\|B\|_{\infty})^{1/2}} \le 10^{-13},
\end{equation*}
where we use the cheaply computable $(\|B\|_1\|B\|_{\infty})^{1/2}$ to
estimate the 2-norm $\|B\|$, which is more reasonable than the individual $%
\|B\|_1$ or $\|B\|_{\infty}$ with $\|\cdot\|_{\infty}$ the infinity norm of
a matrix.

For INI and NI, since the coefficient matrices $\overline{\lambda}_k I-B$
are always positive definite for all $k$ when $B$ is symmetric, we use the
conjugate gradient method to solve inner linear systems. For $B$
unsymmetric, we use BICGSTAB as inner solver. In implementations, we use the
standard Matlab functions \textsf{bicgstab} and \textsf{pcg}. The outer
iteration starts with the normalized vector $\frac{1}{\sqrt{n}}\left[%
1,\dots,1\right]^T$ for NI, INI, \textsf{eigs}, and \textsf{krylovschur}.

The original codes of JDQR and JDRPCG use the absolute residual norms to
decide the convergence, which is not robust for a general purpose. By
setting the stopping criteria ``TOL$=10^{-13}{(\|B\|_1\|B\|_{\infty})^{1/2}}$%
'' for outer iterations in them, we will get the same stopping criteria as
that for NI and INI. We set the parameters ``sigma=LM'' and the inner solver
``OPTIONS.LSolver=bicgstab'' in the unsymmetric case and
``OPTIONS.LSolver=minres'' in the symmetric case, where the Matlab function
\textsf{minres} is the minimal residual method. All the other options use
defaults. We do not use any preconditioning for inner linear systems. For
\textsf{eigs} and \textsf{krylovschur}, we set the stopping criteria
``OPTS.tol$=10^{-13}/{(\|B\|_1\|B\|_{\infty})^{1/2}}$, and take the maximum
and minimum subspace dimensions as $20$ and $2$ at each restart,
respectively. For the computation of Perron roots and vectors, we mention
that \textsf{eigs} and \textsf{krylovschur} do not involve shift-invert, so
we do not solve any inner linear systems.

We denote by $I_{\mathrm{outer}}$ the number of outer iterations to achieve
the convergence and by $I_{\mathrm{inner}}$ the total number of inner
iterations. Note that each outer iteration of these algorithms needs one
matrix-vector product formed with $B$, while each iteration of BICGSTAB uses
two matrix-vector products formed with $B$ and $B^T$, respectively, and each
iteration of MINRES or CG uses one matrix-vector product with $B$. We denote
by $I_{total}$ the total matrix-vector products formed with $B$ and possibly
$B^T$, which can fairly measure the overall efficiency of NI, INI, JDQR,
\textsf{eigs} and \textsf{krylovschur} when the subspace dimensions used by
JDQR, \textsf{eigs} or \textsf{krylovschur} are small at each restart. In
view of the above, we have $I_{total}=I_{outer}+2I_{inner}$ for $B$
unsymmetric and $I_{total}=I_{outer}+I_{inner}$ for $B$ symmetric for our
test algorithms. For \textsf{eigs} and \textsf{krylovschur} we have $%
I_{total}=I_{outer}$ since no inner linear system is solved.

Recall that for unsymmetric problems JDQR, \textsf{eigs} and \textsf{%
krylovschur} may compute complex approximate Perron vectors. If it is the
case, we are only concerned with the signs of components in its real part.
To investigate the positivity of a converged Perron vector, we let the real
part of its maximal component in magnitude be positive, no matter how real
or complex it is. In the tables, \textquotedblleft
Positivity\textquotedblright  records whether the converged Perron vector
preserves the strict positivity property. If no, then the percentage in the
brace indicates the proportion that the converged Perron vector has the
components with positive real part. We also report the CPU time of each
algorithm, which measures the overall efficiency too.


\begin{example}
\label{exp:google}From DIMACS10 test set \cite{DIMACS}, we consider the
unsymmetric nonnegative matrix \textsf{web-Google}. The matrix data
was released in 2002 by Google as a part of Google Programming Contest.
The \textsf{web-Google} is a directed graph with nodes representing web pages and
directed edges representing hyperlinks between them. This matrix is a binary
matrix of order $n=916,428$ and has $5,105,039$ nonzero entries.
\end{example}

Table~\ref{table2} reports the results obtained by NI, INI\_1 with $\gamma
=0.8$ and $\gamma=0.1$, INI\_2, JDQR, \textsf{eigs}, and \textsf{krylovschur}%
. Figures~\ref{fig:google3}--\ref{fig:google2} depict how the residual norms
of outer iterations evolve versus the sum of inner iterations and versus the
outer iterations for NI, INI\_1 with two $\gamma$, INI\_2, and JDQR,
respectively. As we see, Figure~\ref{fig:google2} shows that the residual
norms computed by the five algorithms decreased monotonically. It also
indicates that the NI and INI algorithms converged slowly and similarly in
the beginning of outer iterations. Then they started converging fast. We
find that INI\_1 and INI\_2 achieved the same superlinear (quadratic)
convergence and exhibited very similar convergence behavior to NI, and all
of them used nine outer iterations to achieve the convergence. These results
show that our theory on both INI\_1 and INI\_2 can be conservative.

We observe from Table~\ref{table2} that the INI\_1 and INI\_2 improved the
overall efficiency of NI considerably. Actually, the CPU times used by
INI\_1 with $\gamma=0.8$ and $\gamma=0.1$ were $43\%$ and $51\%$ of those
used by NI, respectively, and that used by INI\_2 was $43\%$ of that used by
NI. In terms of either $I_{total}$ or the CPU time, INI\_1 with $\gamma=0.8$%
, INI\_2 and JDQR were twice as fast as NI. In addition, we find that INI\_1
and INI\_2 were competitive with \textsf{krylovschur}, and they used very
comparable CPU time.

Table~\ref{table2} also shows that \textsf{eigs} used the least CPU time but
more outer iterations than \textsf{krylovschur}, and JDQR was as efficient
as INI\_1 and INI\_2 in terms of $I_{total}$ and the CPU time. The three
algorithms JDQR, \textsf{eigs} and \textsf{krylovschur} computed the Perron
root reliably, but the converged Perron vectors were not positive. As Table~%
\ref{table2} indicates, only 41\%, 42\% and 21\% of the components of the
converged eigenvectors by JDQR, \textsf{krylovschur} and \textsf{eigs} were
positive or had positive real part. This implies that the too many
components were not reliable and were hard to interpret.

We comment that the results by JDQR, \textsf{eigs} and \textsf{krylovschur}
for a desired positive eigenvector are not unusual. Because some
component(s) of the Perron vector must be very small, it is quite possible
that the JDQR, \textsf{eigs} and \textsf{krylovschur} cannot guarantee the
strict positivity of approximate eigenvectors, and, particularly, those very
small or tiny true positive components may change signs and become negative
in the approximations, even though the approximations have already converged
to the positive eigenvector $\mathbf{x}$ in the conventional sense and
attains its maximum accuracy, namely, the level of $\epsilon$.

\begin{table}[tbp]
\caption{The total outer and inner iterations in Example \protect\ref%
{exp:google}}
\label{table2}
\begin{tabular}{l|rrrrl}
\hline
Method & $I_{\mathrm{outer}}$ & $I_{\mathrm{inner}}$ & $I_{total}$ & CPU time
& Positivity \\ \hline
NI & 9 & 115.5 & 240 & 33.4 & Yes \\
INI\_1 with $\gamma =0.8$ & 9 & 55 & 119 & 14.2 & Yes \\
INI\_1 with $\gamma =0.1$ & 9 & 61.5 & 132 & 17.1 & Yes \\
INI\_2 & 9 & 56 & 121 & 14.4 & Yes \\
JDQR & 4 & 52.5 & 109 & 14.2 & No (41\%) \\
\textsf{krylovschur} & 60 & ----- & 60 & 14.5 & No (42\%) \\
\textsf{eigs} & 80 & ----- & 80 & 8.3 & No (21\%) \\ \hline
\end{tabular}%
%
\end{table}

\begin{center}
\begin{figure}[!ht]
\begin{center}
\begin{minipage}[t]{0.49\textwidth}
\begin{center}
\includegraphics[width=6cm]{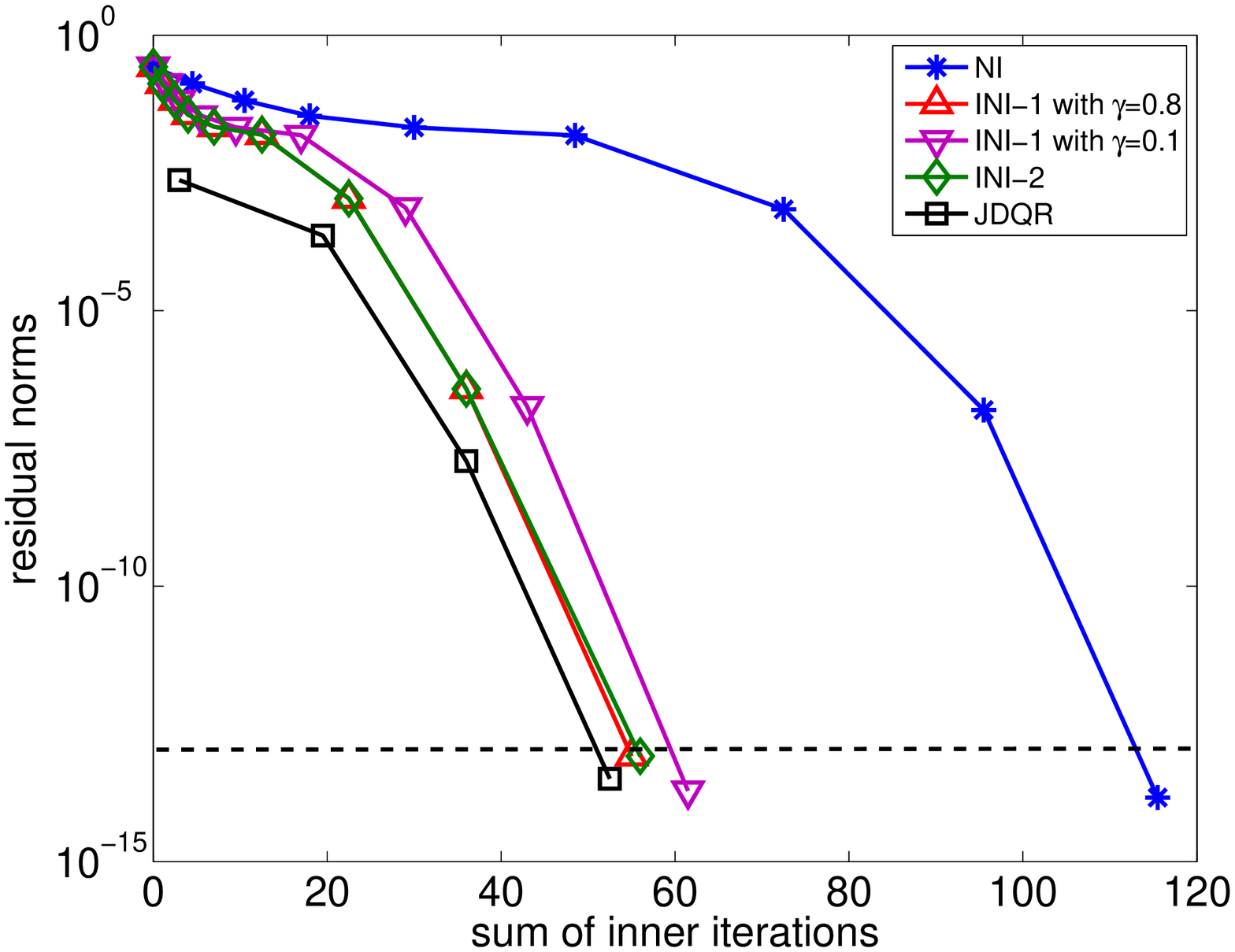}
\caption{\small Example~\ref{exp:google}.
The outer residual norms versus sum of inner iterations.}
\label{fig:google3}
\end{center}
\end{minipage}
\hfill
\begin{minipage}[t]{0.49\textwidth}
\begin{center}
\includegraphics[width=6cm]{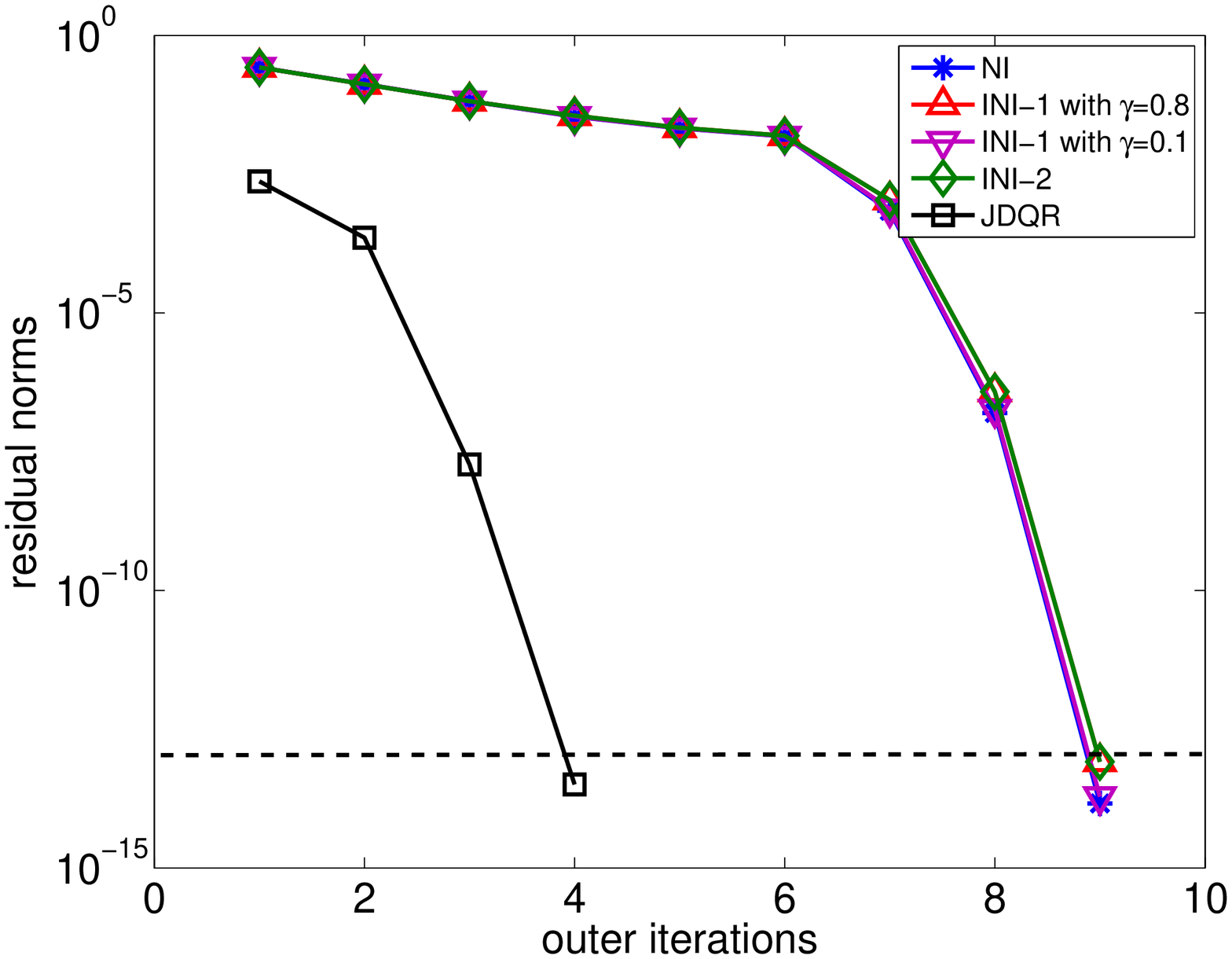}
\caption{\small Example~\ref{exp:google}.
The outer residual norms versus the outer iterations.}
\label{fig:google2}
\end{center}
\end{minipage}
\end{center}
\end{figure}
\end{center}

Finally, it is worth pointing out that, as we have observed, NI needed more
inner iterations than INI\_1 and INI\_2 at each outer iteration, and the
inner iterations used by each algorithm increased as outer iterations
proceeded. This is due to the fact that inner linear system (\ref{eq:step1})
was worse conditioned with increasing $k$ as$\overline{\lambda}_k$ was
closer to $\lambda$, causing that more inner iterations are generally needed
for the fixed $\gamma$, i.e., INI\_1. The situation is more serious with
higher accuracy, as required by INI\_2 and JDQR. Even so, Figure~\ref%
{exp:google} illustrates that INI and JDQR converged superlinearly as inner
iterations increased.


\begin{example}
\label{delaunay}Consider the symmetric nonnegative matrix
\textsf{delaunay\_n20} from DIMACS10 test set \cite{DIMACS}. The
matrix is generated by Delaunay triangulations of random points in the unit
square. It is a binary matrix of order $n=2^{20}=1,048,576$
with $6,291,372$ nonzero entries.
\end{example}

Table~\ref{table3} and Figures~\ref{fig:del3}--\ref{fig:del2} report the
results and convergence processes.

First of all, Table~\ref{table3} shows that JDQR, \textsf{eigs} and \textsf{%
krylovschur} were unreliable to compute the Perron vector and were unable to
produce the positive converged Perron vectors, though \textsf{eigs} and
\textsf{krylovschur} were equally the most efficient and were about $1.2\sim
1.5$ times as fast as INI in terms of $I_{total}$ and the CPU time. It is
seen that only 82\%, 55\% and 59\% of the components of the finally
converged approximations by JDQR, \textsf{krylovschur} and \textsf{eigs}
were positive or had positive real part, respectively. JDQR was the slowest,
and NI was the second most expensive in terms of both $I_{total}$ and the
CPU time. INI\_1 with two $\gamma$ and INI\_2 cost only about $50\%$ of NI,
33\% of JDQR, a considerable improvement over NI and JDQR.

Neglecting the positivity preservation of the converged Perron vectors, we
see that Table~\ref{table3} and Figure~\ref{fig:del2} indicate that NI,
INI\_2, INI\_1 with two $\gamma$ and JDQR used exactly the same, i.e., ten
outer iterations. As is seen from Figure~\ref{fig:del2}, all the algorithms
converged slowly until the sixth outer iteration, then they speeded up very
considerably and converged superlinearly. Furthermore, from the sixth outer
iteration onwards, Figure~\ref{fig:del2} clearly demonstrates that each
algorithm actually achieved the quadratic convergence. It is worthwhile to
point out that, unlike NI and INI, which converged monotonically, JDQR
exhibited irregular convergence behavior. Precisely, the residual norms of
JDQR decreased at the first four outer iterations, increased at the two
outer iterations followed, and then converged regularly from the sixth outer
iteration upwards. Besides, Figure~\ref{fig:del3} indicates that NI, INI and
JDQR converged slowly and linearly in the initial stage, and then they
converged increasingly faster, i.e., superlinearly as inner iterations
increased.


\begin{table}[tbp]
\caption{The total outer and inner iterations in Example \protect\ref%
{delaunay}}
\label{table3}
\begin{tabular}{l|rrrrl}
\hline
Method & $I_{\mathrm{outer}}$ & $I_{\mathrm{inner}}$ & $I_{total}$ & CPU time
& Positivity \\ \hline
NI & 10 & 524 & 534 & 86 & Yes \\
INI\_1 with $\gamma =0.8$ & 10 & 259 & 269 & 40 & Yes \\
INI\_1 with $\gamma =0.1$ & 10 & 291 & 301 & 46 & Yes \\
INI\_2 & 10 & 261 & 271 & 40 & Yes \\
JDQR with Minres & 9 & 750 & 759 & 121 & No (82\%) \\
\textsf{krylovschur} & 180 & ----- & 180 & 28 & No (55\%) \\
\textsf{eigs} & 200 & ----- & 200 & 24 & No (59\%) \\ \hline
\end{tabular}%
%
\end{table}

\begin{center}
\begin{figure}[!ht]
\begin{center}
\begin{minipage}[t]{0.49\textwidth}
\begin{center}
\includegraphics[width=6cm]{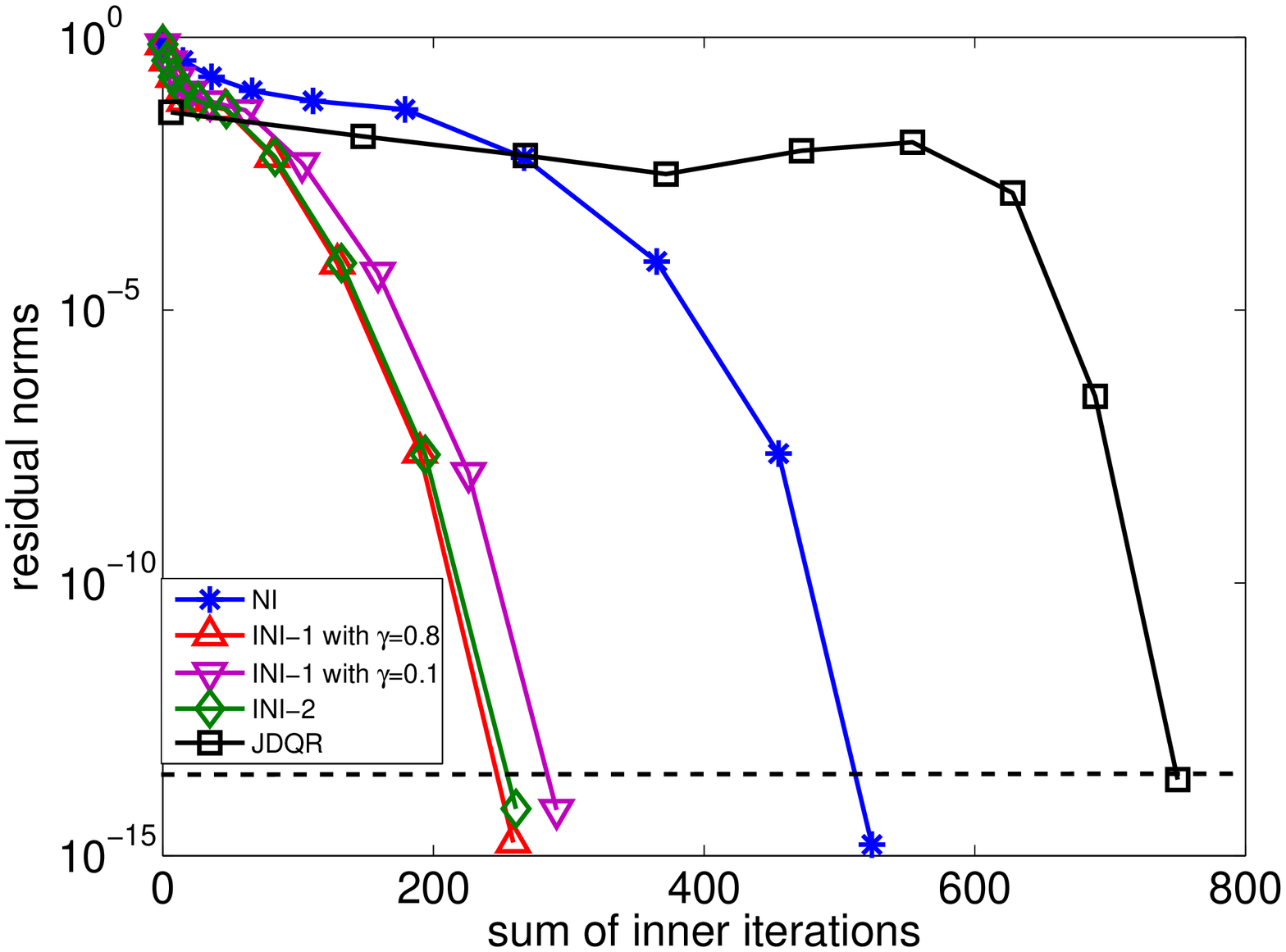}
\caption{\small Example~\ref{delaunay}.
The outer residual norms versus sum of inner iterations.}
\label{fig:del3}
\end{center}
\end{minipage}
\hfill
\begin{minipage}[t]{0.49\textwidth}
\begin{center}
\includegraphics[width=6cm]{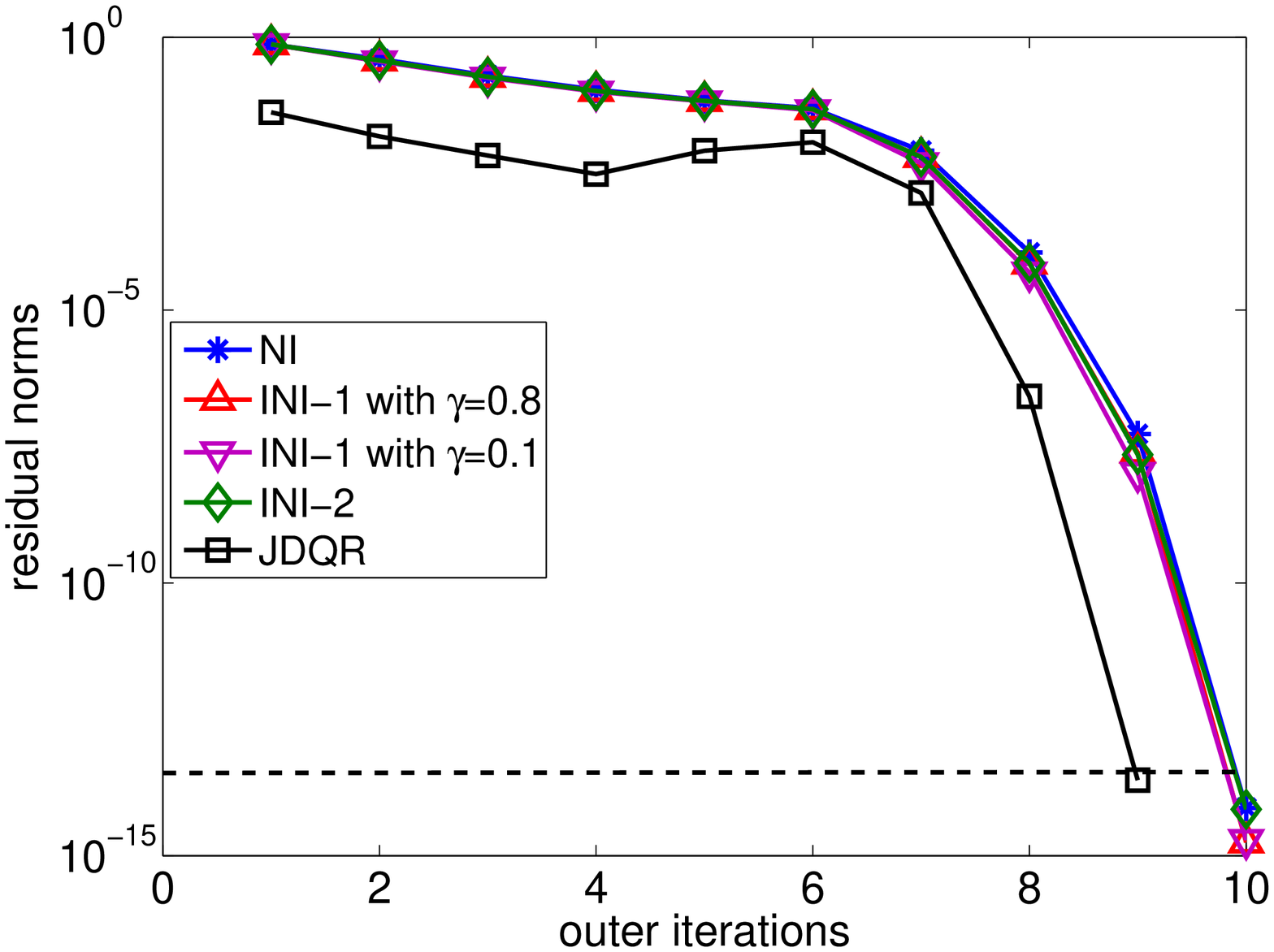}
\caption{\small  Example~\ref{delaunay}.
The outer residual norms versus the outer iterations.}
\label{fig:del2}
\end{center}
\end{minipage}
\end{center}
\end{figure}
\end{center}

\subsection{INI for M-matrices}

In this subsection, we use NI and INI to compute the smallest eigenpair of
an irreducible nonsingular $M$-matrix $A$ and illustrate the effectiveness
of INI. For NI, INI\_1 and INI\_2, the stopping criteria for inner and outer
iterations are the same as those for the nonnegative matrix eigenproblem.
For JDQR, we also take the same stopping criterion for outer iterations, and
set the parameter ``sigma=SM''. All the other parameters are the same as
those for nonnegative matrix eigenvalue problems. For \textsf{eigs}, since
we now compute the smallest eigenvalue and the associated eigenvector of an
irreducible nonsingular $M$-matrix $A$, we use \textsf{eigs}($\sigma
speye(n)-A$,'LM',OPTS) to replace with \textsf{eigs}($A$,'SM',OPTS) for some
easily chosen $\sigma>\rho(A)$. The reason for this strategy is twofold:
First, $\sigma I-A$ becomes an irreducible nonnegative matrix, which means
that we only need to compute the largest eigenpair of $\sigma I-A$ without
solving any linear system. Second, if \textsf{eigs}($A$,'SM',OPTS) is used,
we have to solve a sequence of inner linear systems by first making a sparse
LU factorization of $A$ and then solving two lower and upper triangular
systems at each iteration. This is assumed to be prohibited for a very large
$A$ throughout this paper. We also apply \textsf{krylovschur} to $\sigma I-A$
to find the smallest eigenpair of $A$.


\begin{example}
\label{exp:3Dmesh}We consider an unsymmetric irreducible nonsingular
$M$-matrix from the 3D Human
Face Mesh \cite{Yau13} with a small noise. This is a matrix of order
$n=42,875$ with $ 693,875$ nonzero entries.
\end{example}

We choose $\sigma$ in such a way: let $d=\max_{1\leq i\leq n}\{a_{ii}\}$,
which equals 2771, and take $\sigma=3000$. We then get a nonnegative matrix $%
B=\sigma I-A$. Table~\ref{table5} reports the results obtained, and Figures~%
\ref{fig:mesh}--\ref{fig:mesh2} depict the convergence processes of NI, INI
and JDQR.

For this problem, the good news is that all the algorithms worked well and
all the converged eigenvectors were positive. We observe from the figures
that NI, INI and JDQR converged smoothly and fast as inner iterations
increased or outer iterations proceeded. Except INI\_1 with $\gamma=0.8$,
which used ten outer iterations, NI and INI used seven outer iterations to
attain the desired accuracy. However, regarding the overall efficiency, NI
was the most expensive, and INI\_2 was considerably better than NI and used $%
65\%$ of $I_{total}$ and $67\%$ of the CPU time that NI consumed. INI\_1
with two $\gamma$ improved the performance of NI more substantially. INI\_1
with $\gamma =0.8$ was the most efficient in terms of $I_{total}$ and the
CPU time. \textsf{eigs} and \textsf{krylovschur} were similarly efficient
and consumed almost the same CPU time as INI\_1 with $\gamma=0.8$, but they
used considerably more $I_{total}$. In addition, we have observed that
INI\_1 with $\gamma=0.1$ and INI\_2 were competitive with JDQR.

Finally, we see that for this $M$-matrix two different $\gamma$ led to
distinct convergence behavior. As Figure~\ref{fig:mesh2} indicates, NI and
INI\_2 typically converged superlinearly, and INI\_1 with $\gamma=0.1$
exhibited very similar superlinear convergence to NI, while INI\_1 with $%
\gamma=0.8$ typically converged linearly. These confirm our theory and
demonstrate that our theoretical results can be realistic and pronounced.

\begin{table}[tbp]
\caption{The total outer and inner iterations in Example \protect\ref%
{exp:3Dmesh}}
\label{table5}
\begin{tabular}{l|rrrrl}
\hline
Method & $I_{\mathrm{outer}}$ & $I_{\mathrm{inner}}$ & $I_{total}$ & CPU time
& Positivity \\ \hline
NI & 7 & 1400 & 2807 & 19.1 & Yes \\
INI\_1 with $\gamma =0.8$ & 10 & 521 & 1052 & 7.2 & Yes \\
INI\_1 with $\gamma =0.1$ & 7 & 773 & 1553 & 10.2 & Yes \\
INI\_2 & 7 & 912 & 1831 & 12.9 & Yes \\
JDQR & 6 & 506 & 1018 & 10.9 & Yes \\
\textsf{krylovschur} & 1160 & ----- & 1160 & 7.4 & Yes \\
\textsf{eigs} & 1420 & ----- & 1420 & 7.0 & Yes \\ \hline
\end{tabular}%
%
\end{table}

\begin{center}
\begin{figure}[!ht]
\begin{center}
\begin{minipage}[t]{0.49\textwidth}
\begin{center}
\includegraphics[width=6cm]{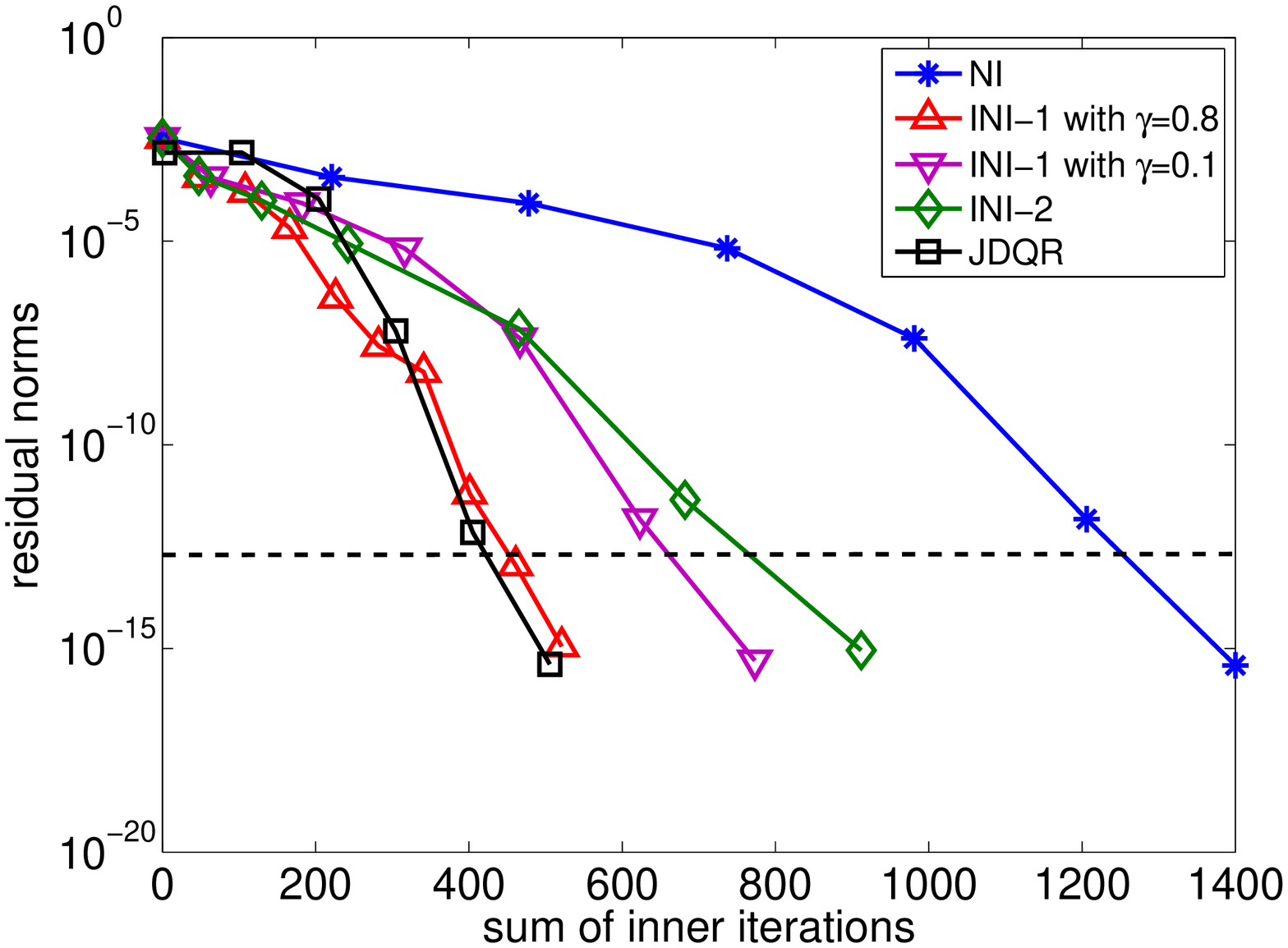}
\caption{\small Example~\ref{exp:3Dmesh}.
The outer residual norms versus sum of inner iterations.}
\label{fig:mesh}
\end{center}
\end{minipage}
\hfill
\begin{minipage}[t]{0.49\textwidth}
\begin{center}
\includegraphics[width=6cm]{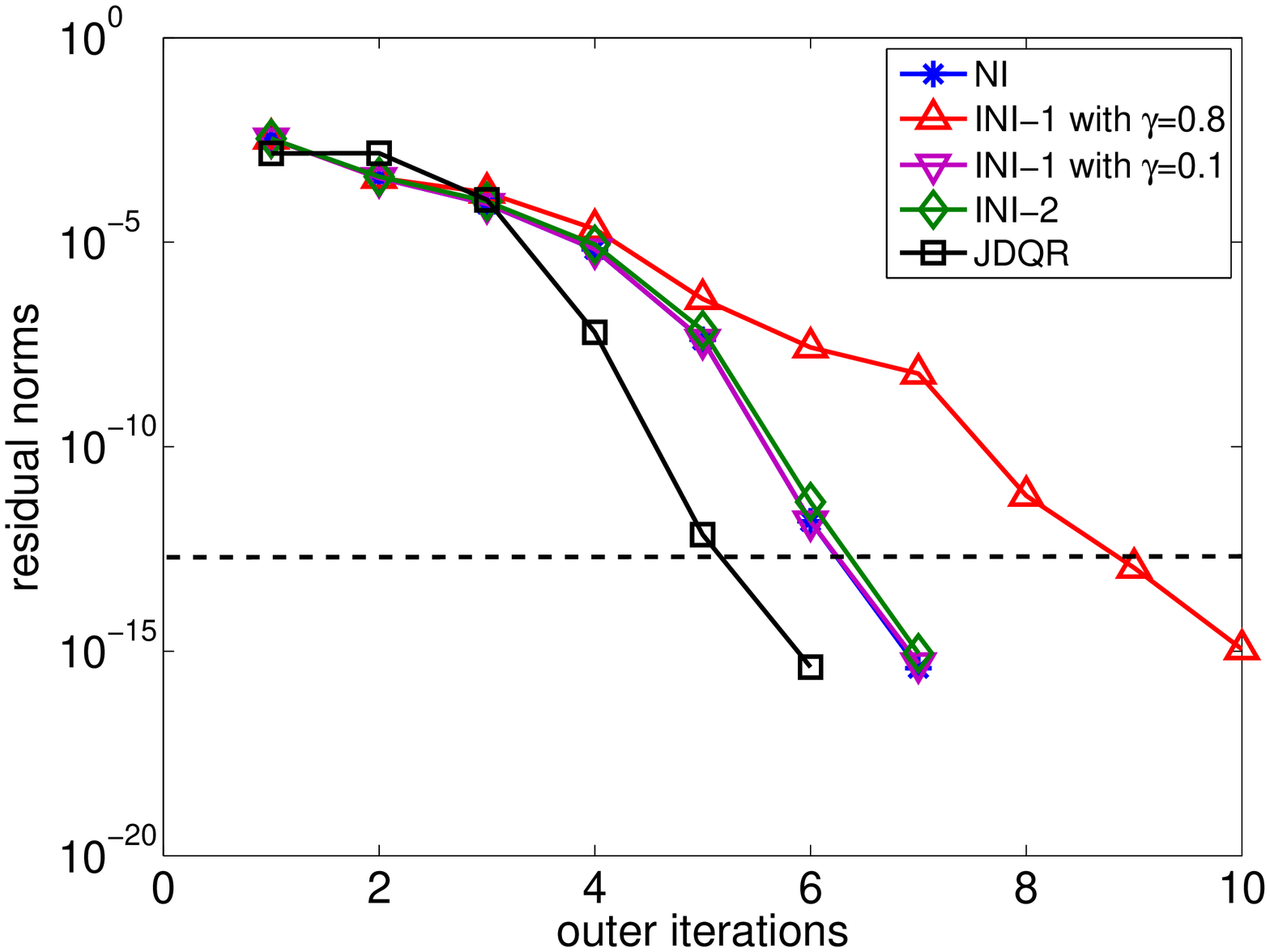}
\caption{\small Example~\ref{exp:3Dmesh}.
The outer residual norms versus the outer iterations.}
\label{fig:mesh2}
\end{center}
\end{minipage}
\end{center}
\end{figure}
\end{center}

\begin{example}
\label{exp:nicolo}Consider the symmetric $M$-matrix \textsf{nicolo\_da\_uzzano}
from AIM@SHAPE Shape Repository \cite{aim}. This matrix describes the full
resolution shape (2 millions triangles) edited to remove the errors introduced
by scanning and reconstruction phases. The matrix \textsf{nicolo\_da\_uzzano} is
generated by the barycentric mapping method \cite{wei10}, is of order
$n=943,870$ and has $8,960,880$ nonzero entries.
\end{example}

For \textsf{eigs} and \textsf{krylovschur}, we get a nonnegative matrix $%
B=\sigma I-A$ in this way: let $d=\max_{1\leq i \leq n} \{a_{ii}\}$, which
equals 2421, and then take $\sigma=2500$. Table~\ref{table6} reports the
numerical results obtained by NI, INI, JDQR, JDRPCG, \textsf{krylovschur}
and \textsf{eigs}. Figures~\ref{fig:nico3}--\ref{fig:nico2} describe the
convergence processes of NI and INI. We omit the convergence curves of JDQR
and JDRPCG because JDQR failed to compute the desired eigenpair and JDRPCG
used much more outer iterations than NI and INI did, as is seen from Table~%
\ref{table6}.

As far as $I_{outer}$ is concerned, NI and INI worked very well and used
only four outer iterations to attain the desired accuracy. As Figure~\ref%
{fig:nico2} shows, they had indistinguishable convergence curves.
Furthermore, all of them were reliable and positivity preserving. For the
overall performance, INI\_1 and INI\_2 were equally efficient, and they used
about 60\% of $I_{total}$ and the CPU time of NI.

In contrast, JDQR, JDRPCG, \textsf{krylovschur} and \textsf{eigs} performed
poorly for this problem. At much more expenses than NI and INI, JDQR finally
failed to produce the correct eigenpair and misconverged. JDRPCG, \textsf{%
kryllovschur} and \textsf{eigs} computed the desired eigenvalue, but the
converged eigenvectors were not positive. Table~\ref{table6} indicates that
for these algorithms roughly 50\% of the components of each converged
eigenvector were negative.

Without considering the reliability or the positivity, for the overall
efficiency, JDRPCG was competitive with INI in terms of the CPU time and $%
I_{total}$. However, \textsf{krylovschur} and \textsf{eigs} used as twice
the CPU time as INI; they were also more expensive than INI in terms of $%
I_{total}$. \textsf{krylovschur} used considerably fewer $I_{total}$ than
\textsf{eigs} did, but it consumed more CPU time than the latter.

\begin{table}[tbp]
\caption{The total outer and inner iterations in Example \protect\ref%
{exp:nicolo}}
\label{table6}
\begin{tabular}{l|rrrrl}
\hline
Method & $I_{\mathrm{outer}}$ & $I_{\mathrm{inner}}$ & $I_{total}$ & CPU time
& Positivity \\ \hline
NI & 4 & 697 & 701 & 54 & Yes \\
INI\_1 with $\gamma =0.8$ & 4 & 400 & 404 & 31 & Yes \\
INI\_1 with $\gamma =0.1$ & 4 & 440 & 444 & 33 & Yes \\
INI\_2 & 4 & 405 & 409 & 31 & Yes \\
JDQR with Minres & 53 & 10458 & 10511 & 1480 & Wrong eigenpair \\
JDRPCG & 146 & 279 & 425 & 23 & No (57\%) \\
\textsf{krylovschur} & 500 & ----- & 500 & 75 & No (45\%) \\
\textsf{eigs} & 820 & ----- & 820 & 60 & No (42\%) \\ \hline
\end{tabular}%
%
\end{table}

\begin{center}
\begin{figure}[!ht]
\begin{center}
\begin{minipage}[t]{0.49\textwidth}
\begin{center}
\includegraphics[width=6cm]{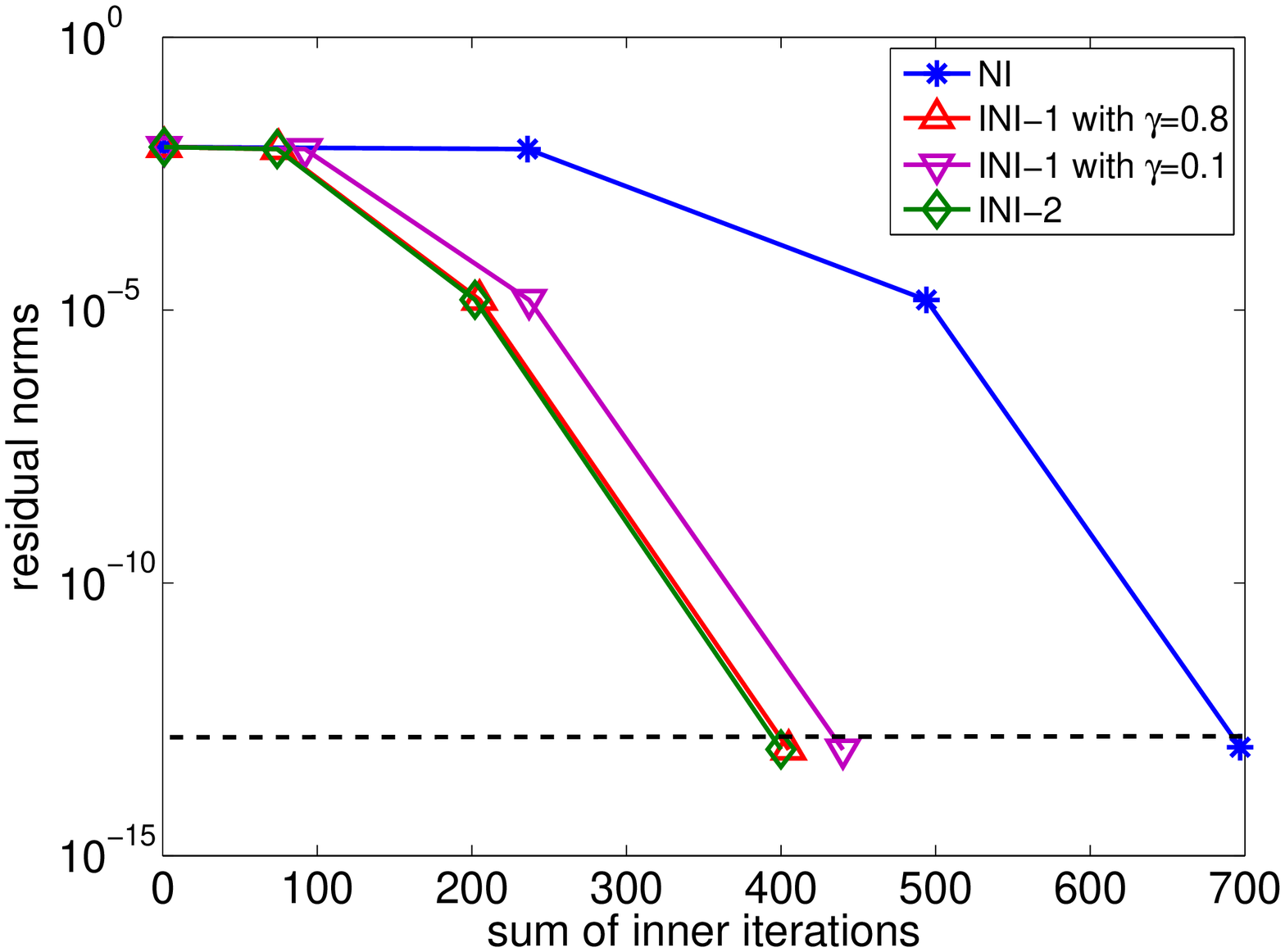}
\caption{\small Example~\ref{exp:nicolo}.
The outer residual norms versus sum of inner iterations.}
\label{fig:nico3}
\end{center}
\end{minipage}
\hfill
\begin{minipage}[t]{0.49\textwidth}
\begin{center}
\includegraphics[width=6cm]{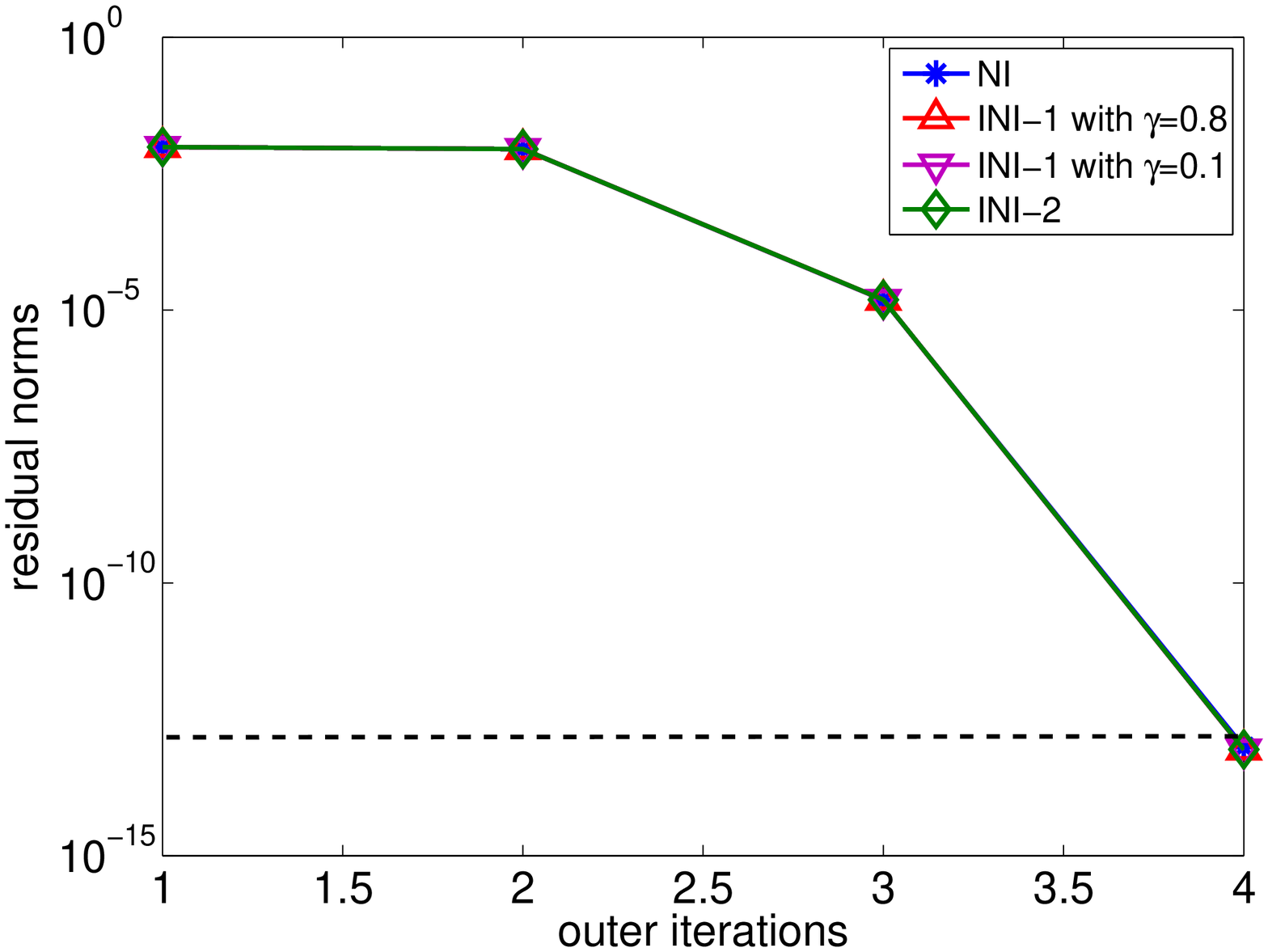}
\caption{\small Example~\ref{exp:nicolo}.
The outer residual norms versus the outer iterations.}
\label{fig:nico2}
\end{center}
\end{minipage}
\end{center}
\end{figure}
\end{center}

\begin{remark}
We have also tested the previous four examples using the power method,
which preserves the positivity of the approximate eigenvectors at each iteration
and is linearly convergent for any positive starting vector.
For Examples~\ref{exp:google}--\ref{delaunay},
we have found that the power method needed $3\sim 6$ times of the CPU time of
INI\_1 because of the eigenvalue clustering. For $M$-matrices in
Examples \ref{exp:3Dmesh} and \ref{exp:nicolo}, we applied the power method to
the nonnegative matrices $B$, which were generated by $B=\sigma I-A$ with
$\sigma$ chosen as before, for finding $\rho(B)$ and the Perron vectors.
The power method cost more than ten times of the CPU time of INI\_1.
So INI was much more efficient than the power method for these four
examples. We omit the details on numerical results obtained by the power method.
\end{remark}

\section{Conclusions}

For the efficient computation of the smallest eigenpair of a large
irreducible nonsingular $M$-matrix, we have proposed a positivity preserving
inexact Noda iteration method with two practical inner tolerance strategies
provided for solving the linear systems involved. We have analyzed the
convergence of the method in detail, and have established a number of global
linear and superlinear convergence results, with the linear convergence
factor and the superlinear convergence order derived explicitly. Precisely,
we have proved that INI converges at least linearly with the asymptotic
convergence factor bounded by $\frac{2\gamma}{1+\gamma}$ for $\xi_k = \Vert
\mathbf{f}_k\Vert\le \gamma \min(\mathbf{x}_k)$ and superlinearly with the
convergence order $\frac{1+\sqrt{5}}{2}$ for the decreasing $\Vert \mathbf{f}%
_k\Vert \leq \frac{\underline{\lambda}_k- \underline{\lambda}_{k-1}}{%
\underline{\lambda}_k}=O(\tan\varphi_{k-1})$, respectively. We have also
revisited the convergence of NI and proved its quadratic convergence in a
different form from \cite{Els76}. The results on INI clearly show how inner
tolerance affects the convergence of outer iterations.

Numerically, we have illustrated that the proposed INI algorithms are
practical for large nonnegative matrix and $M$-matrix eigenvalue problems,
and they can reduce the total computational cost of NI substantially. In the
meantime, regarding the positivity preservation, the numerical experiments
have shown that the INI algorithms are superior to the Jacobi--Davidson
method, the implicitly restarted Arnoldi method and the Krylov--Schur
method. They always preserve the positivity of approximate eigenvectors,
while the Jacobi--Davidson method, the implicitly restarted Arnoldi method
and the Krylov--Schur method often fail to do so. Moreover, regarding he
overall efficiency, INI algorithms are competitive with and can be
considerably higher than the other three methods for some practical problems.

\begin{acknowledgements}
We thank the referees for their careful reading of our paper and
a number of valuable comments, which helped us improve the presentation
of the paper.
\end{acknowledgements}



\end{document}